\newtheorem{theorem}{Theorem}[section]
\newtheorem{prop}[theorem]{Proposition}
\newtheorem{defn}[theorem]{\rm\textsc{Definition}}
\newtheorem{lem}[theorem]{Lemma}
\newtheorem{coro}[theorem]{Corollary}
\newtheorem{thm}[theorem]{Theorem}
\newtheorem{rem}[theorem]{\rm\textsc{Remark}}
\newtheorem{exam}[theorem]{\rm\textsc{Example}}
\newcommand{\bslash}{\kern-0.1em\texttt{\scalebox{0.6}[1]{/}}\kern-0.15em \texttt{\scalebox{0.6}[1]{/}}}
\begin{document}
%%%%%%%%%%%%%%%%%%%%%%%%%%%%%%%%%%%%%%%%%%%%%%%%%%%%%%%%%%%%%%%%%%%%%%%%%
\title[The logic of quasi-MV* algebras]{The logic of quasi-MV* algebras}
%%%%%%%%%%%%%%%%%%%%%%%%%%%%%%%%%%%%%%%%%%%%%%%%%%%%%%%%%%%%%%%%%%%%%%%%%

\author{Lei Cai}
\address{School of Mathematical Sciences, University of Jinan, No. 336, West Road of Nan Xinzhuang,
         Jinan, Shandong, 250022 P.R. China.}
\email{cailei@stu.ujn.edu.cn}

\author{Yingying Jiang}
\address{School of Mathematical Sciences, University of Jinan, No. 336, West Road of Nan Xinzhuang,
         Jinan, Shandong, 250022 P.R. China.}
\email{yyjiangmath@gmail.com}

\author{Wenjuan Chen}
\address{School of Mathematical Sciences, University of Jinan, No. 336, West Road of Nan Xinzhuang,
         Jinan, Shandong, 250022 P.R. China.}
\email{wjchenmath@gmail.com}

\begin{abstract}
Quasi-MV* algebras were introduced as generalizations of MV*-algebras and quasi-MV algebras in \cite{jc1}. The recent investigation into quasi-MV* algebras shows that they are closely related to quantum computational logic and complex fuzzy logic. In this paper, we aim to study the logical system associated with quasi-MV* algebras in detail. First, we introduce quasi-Wajsberg* algebras as the term equivalence of quasi-MV* algebras and investigate the related properties of quasi-Wajsberg* algebras. Then we establish the logical system associated with quasi-Wajsberg* algebras using fewer deduction rules. Finally, we discuss the soundness of this logical system.
\end{abstract}

\keywords{Quasi-MV* algebras; Quasi-Wajsberg* algebras; MV*-algebras; Complex fuzzy logics}

\maketitle
\baselineskip=16.2pt

%%%%%%%%%%%%%%%%%%%%%%%%%%%Contents%%%%%%%%%%%%%%%%%%%%%%%%
%\textcolor{blue}{\tableofcontents{}}
\dottedcontents{section}[1.16cm]{}{1.8em}{5pt}
\dottedcontents{subsection}[2.00cm]{}{2.7em}{5pt}
%\dottedcontents{subsubsection}[2.86cm]{}{3.4em}{5pt}
\section{Introduction}\label{intro}

The ${\L}$ukasiewicz infinite-valued propositional logic ${\L}$ is an important non-classical logical system with broad applications in fields such as fuzzy control, computer engineering, mathematical foundations, philosophical logic, and quantum computing. As is well-known, the truth values of the logic ${\L}$ are defined over the closed unit interval $[0,1]$. In \cite{c2}, Chang considered a natural extension ${\L}^{*}$ of ${\L}$, where the truth-value set is extended to the closed interval $[-1,1]$, and established several results for the logic ${\L}^{*}$. Notably, some results for ${\L}^{*}$ were direct analogues of known results for ${\L}$. To investigate the axiomatizability of certain sets, Chang further introduced and studied MV*-algebras in \cite{c2}. These algebras extend MV-algebras in a manner analogous to how the interval $[-1,1]$ naturally extends $[0,1]$. However, because the binary operation $+$ on an MV*-algebra is non-associative, it introduced complexities which were not present in the case for MV-algebra. Subsequently, Lewin et al. studied the algebraic structures of MV*-algebras in \cite{lsm1}, and discussed the soundness and completeness of ${\L}^{*}$ in \cite{ls,lsm2}.

In 2003, based on complex fuzzy sets, Ramot et al. proposed complex fuzzy logic as a generalization of traditional fuzzy logic \cite{rflk}. The set of truth values in complex fuzzy logic is the unit circle in the complex plane, i.e., $\{z\in \mathbb{C}| |z|\leq 1\}$. Since then, complex fuzzy logic has received increasingly attentions \cite{dyy,lz,yd}, and many authors have sought to extend the study of traditional fuzzy logic to complex fuzzy logic. For example, the algebraic product is a common operator in traditional fuzzy logic. In \cite{d05}, Dick applied the algebraic product as an operator in complex fuzzy logic and derived a partial ordering on the unit circle in the complex plane. However, the algebraic product in complex fuzzy logic does not retain the properties of its counterpart in traditional fuzzy logic. Dai later improved Dick's results in \cite{d21}. Indeed, existing research indicates that algebraic structures in traditional fuzzy logic cannot be directly transplanted to complex fuzzy logic.

Considering the geometric meaning of the unit circle in the complex plane, the set $\{z\in \mathbb{C}| |z|\leq 1\}$ consists of all points whose distance from the origin does not exceed 1. Following this view, the closed interval $[-1,1]$ can be viewed as the set of all points satisfying the distance to the origin is not more than 1 on the real axis. Thus $\{z\in \mathbb{C}| |z|\leq 1\}$ naturally generalizes $\{z\in \mathbb{R}| |z|\leq 1\}=[-1,1]$, implying that complex fuzzy logic can be regarded as a generalization of ${\L}^{*}$. Therefore, we believe that extending MV*-algebras may hold significant relevance for studying the algebraic structures of complex fuzzy logic.

On the other hand, to investigate complex fuzzy logic, Tamir et al. in \cite{tlk} proposed a Cartesian representation of the complex membership grade: $u(V,z)=u_{r}(V)+ju_{i}(z)$, where $u_{r}(V)$, $u_{i}(z)$ are in the unit interval $[0,1]$ and $\sqrt{-1}=j$. Based on this representation, they defined generalized complex propositional fuzzy logic using direct generalization of traditional fuzzy logic. Using Euler formula, the polar-to-cartesian coordinate transformation is expressed as $z=\rho e^{j\theta}=\rho \cos(\theta)+j\rho \sin(\theta)$, where $\rho \cos(\theta)$, $\rho \sin(\theta)$ are in the closed interval $[-1,1]$ and $\sqrt{-1}=j$. Consequently, the Cartesian product $[-1,1]\times [-1,1]$ serves as a more comprehensive domain for representing complex fuzzy logic.

Building on this framework, Jiang and Chen introduced quasi-MV* algebras as a generalization of MV*-algebras and quasi-MV algebras. In \cite{jc1}, the algebraic structure on $[-1,1]\times [-1,1]$ was shown to be a quasi-MV* algebra. Since the unit circle in the complex plane is embedded within $[-1,1]\times [-1,1]$, quasi-MV* algebras provide a unified algebraic framework to characterize the logical properties of complex fuzzy logic.
While previous studies \cite{jc2, jc3} have thoroughly characterized the ideals and filters of quasi-MV* algebras, this paper shifts focus to the logical systems associated with quasi-MV* algebras. Taking implication as a primitive connective better aligns with logical frameworks, therefore, quasi-MV* algebras are commonly replaced by their term equivalent variety. Building on this perspective, we first introduce quasi-Wajsberg* algebras in this paper.
The paper is organized as follows: In Section \ref{Pre}, we review some definitions and properties which will be used in what follows. In Section \ref{Sec-qw}, we give the definition of a quasi-Wajsberg* algebra and investigate the related properties of a quasi-Wajsberg* algebra. We also discuss the relationship between quasi-Wajsberg* algebras and quasi-MV* algebras. In Section \ref{Sec-L}, we establish the logical system associated with quasi-MV* algebras, define an equivalence relation which is based on semantics of the logical system, and show that the quotient algebra with respect to this equivalence relation is an MV*-algebra. Moreover, we also discuss the soundness of this logical system.

\section{Preliminary}\label{Pre}
In this section, we recall some definitions and results which will be used in the following.

\begin{defn}\cite{lsm1}
Let $\textbf{B}=\langle B;\oplus,{-},0,1\rangle$ be an algebra of type $\langle2,1,0,0\rangle$. If the following conditions are satisfied for any $x,y,z\in B$,

(MV*1)  $x \oplus y =y \oplus x$,

(MV*2)  $(1\oplus x) \oplus(y \oplus( 1\oplus z))=((1\oplus x) \oplus y) \oplus(1\oplus z)$,

(MV*3)  $x \oplus (-x)=0$,

(MV*4)  $(x \oplus 1)\oplus 1 =1$,

(MV*5)  $x \oplus 0 =x$,

(MV*6)  ${-}(x \oplus y) =(-x) \oplus (-y)$,

(MV*7)  ${-}(-x) =x$,

(MV*8)  $x \oplus y =(x^{+} \oplus y^{+} )\oplus (x^{-}\oplus y^{-})$,

(MV*9)  $(-x \oplus (x \oplus y))^{+} ={-}(x^{+})\oplus (x^{+} \oplus y^{+})$,

(MV*10)  $x \vee y =y \vee x$,

(MV*11)  $x \vee (y\vee z) =(x \vee y)\vee z$,

(MV*12)  $x \oplus (y\vee z) =(x \oplus y)\vee(x\oplus z)$,

\noindent in which ones define
$x^{+}=1 \oplus(-1 \oplus x)$,
$x^{-}=-1 \oplus(1 \oplus x)$, and
$x\vee y=(x^{+}\oplus(-x^{+}\oplus y^{+})^{+})\oplus(x^{-}\oplus(-x^{-}\oplus y^{-})^{+})$, then $\textbf{B}=\langle B;\oplus,{-},0,1\rangle$ is called an \emph{MV*-algebra}.
\end{defn}

\begin{exam}\cite{c2}\label{e0.0} Let $R=[-1,1]$ and operations be defined as follows. For any $a, b\in R$, $a \oplus_{\scriptscriptstyle \textbf{R}} b=\min{\{1, \max{\{-1, a+b\}}\}}$, $-_{\scriptscriptstyle \textbf{R}} a=-a$, $0_{\scriptscriptstyle \textbf{R}}=0$, and $1_{\scriptscriptstyle \textbf{R}}=1$.
Then $\textbf{R}=\langle R; \oplus_{\scriptscriptstyle \textbf{R}}, -_{\scriptscriptstyle \textbf{R}}, 0_{\scriptscriptstyle \textbf{R}}, 1_{\scriptscriptstyle \textbf{R}} \rangle$ is an MV*-algebra.
\end{exam}

\begin{defn}\cite{jc1}\label{d1.1}
Let $\textbf{A}=\langle A;\oplus,{-},^{+},^{-},0,1\rangle$ be an algebra of type $\langle2,1,1,1,0,0\rangle$. If the following conditions are satisfied for any ${x,y, z}\in A$,

(QMV*1) $x\oplus y=y\oplus x$,

(QMV*2) $(1\oplus x)\oplus(y\oplus(1\oplus z))=((1\oplus x)\oplus y)\oplus(1\oplus z)$,

(QMV*3) $(x\oplus 1)\oplus 1=1$,

(QMV*4) $(x\oplus y)\oplus 0=x\oplus y$,

(QMV*5) $x^{+}\oplus 0 = (x\oplus 0)^{+}=1 \oplus(-1 \oplus x)$,

\hspace{1.6cm} $x^{-}\oplus 0=(x\oplus 0)^{-} = -1 \oplus(1 \oplus x)$,

(QMV*6) $x\oplus y=(x^{+}\oplus y^{+})\oplus (x^{-}\oplus y^{-})$,

(QMV*7) $0=-0$,

(QMV*8) $x\oplus (-x)=0$,

(QMV*9) $-(x\oplus y)=-x\oplus (-y)$,

(QMV*10) $-(-x)=x$,

(QMV*11) $(-x\oplus(x\oplus y))^{+}=-x^{+}\oplus (x^{+}\oplus y^{+})$,

(QMV*12) $x\vee y=y\vee x$,

(QMV*13) $x\vee (y\vee z)=(x\vee y)\vee z$,

(QMV*14) $x\oplus (y\vee z)=(x\oplus y)\vee(x\oplus z)$,

\noindent in which $x\vee y = (x^{+}\oplus(-x^{+} \oplus y^{+})^{+})\oplus(x^{-}\oplus (-x^{-}\oplus y^{-})^{+})$, then $\textbf{A}=\langle A;\oplus,{-},^{+},^{-},0,1\rangle$ is called a \emph{quasi-MV* algebra}.
\end{defn}

\begin{exam}\label{e0.1}
Let $R^{*}=[-1,1]\times [-1,1]$ and operations be defined as follows. For any $\langle a,b\rangle, \langle c,d\rangle\in R^{*}$,

$\langle a, b\rangle \oplus_{\scriptscriptstyle\textbf{R}^{*}} \langle c, d\rangle=\langle \max{\{-1, \min{\{1, a+c\}}\}}, 0 \rangle$,

$-_{\scriptscriptstyle \textbf{R}^{*}}\langle a, b\rangle=\langle -a,-b\rangle$,

$\langle a, b\rangle^{+_{\textbf{R}^{*}}}=\langle \max\{0, a\}, \max\{0,b\} \rangle$,

$\langle a, b\rangle^{-_{\textbf{R}^{*}}}=\langle \min\{0, a\}, \min\{0,b\} \rangle$,

$0_{\scriptscriptstyle \textbf{R}^{*}}=\langle 0, 0\rangle$,

$1_{\scriptscriptstyle \textbf{R}^{*}}=\langle 1, 0\rangle$.

Then $\textbf{R}^{*}=\langle R^{*}; \oplus_{\scriptscriptstyle \textbf{R}^{*}}, -_{\scriptscriptstyle \textbf{R}^{*}}, ^{+_{\textbf{R}^{*}}}, ^{-_{\textbf{R}^{*}}}, 0_{\scriptscriptstyle \textbf{R}^{*}}, 1_{\scriptscriptstyle \textbf{R}^{*}} \rangle$ is a quasi-MV* algebra.
\end{exam}

\begin{exam}\cite{jc1}\label{e3.3}
Let $A=\{a,b,c,d,e,0,1\}$ be a 7-element set and operations be defined in Table 1 and Table 2. Then $\langle A;\oplus,-,^+,^-,0,1\rangle$ is a quasi-MV* algebra.

%\vspace{0.5cm}
\begin{minipage}{\textwidth}
 \begin{minipage}[t]{0.45\textwidth}
  \centering
     \makeatletter\def\@captype{table}\makeatother\caption{the operation $\oplus$}
       \begin{tabular}{|c|c|c|c|c|c|c|c|}
\hline
$\oplus$ & $a$ &$b$&$c$&$0$&$d$&$e$&$1$ \\
\hline
$a$&$a$&$a$&$a$&$a$&$b$&$b$&$0$ \\
\hline
$b$&$a$&$a$&$a$&$b$&$0$&$0$&$e$ \\
\hline
$c$&$a$&$a$&$a$&$b$&$0$&$0$&$e$ \\
\hline
0&$a$&$b$&$b$&0&$e$&$e$&$1$ \\
\hline
$d$ &$b$&0&0&$e$&1&1&1 \\
\hline
$e$ &$b$&0&0&$e$&1&1&1 \\
\hline
1&0&$e$&$e$&1&1&1&1 \\
\hline
\end{tabular}
  \end{minipage}
  \begin{minipage}[t]{0.45\textwidth}
   \centering
        \makeatletter\def\@captype{table}\makeatother\caption{the operations $-, ^+, ^-$}
         \begin{tabular}{|c|c|c|c|c|c|c|c|}
\hline
{}&$a$&$b$&$c$&0&$d$&$e$&$1$\\
\hline
$-$&$1$&$e$&$d$&0&$c$&$b$&$a$\\
\hline
$^+$&$0$&$0$&$0$&0&$d$&$e$&$1$\\
\hline
$^-$&$a$&$b$&$c$&0&$0$&$0$&$0$\\
\hline
\end{tabular}
   \end{minipage}
\end{minipage}
\end{exam}

\begin{lem}\emph{\cite{jc1}}\label{l0.1}
Let $\emph{\textbf{A}}=\langle A;\oplus,{-},^{+},^{-},0,1\rangle$ be a quasi-MV* algebra. Then for any $x,y\in A$, we have

\emph{(1)} $0\oplus 0=0$, $1\oplus 0=1$, $-1\oplus 0=-1$, $1\oplus 1=1$ and $-1\oplus (-1)=-1$,

\emph{(2)} $-(x\oplus 0)=-x\oplus 0$,

\emph{(3)} $0^{+}=0=0^{-}$, $1^{+}=1$, $1^{-}=0$, $(-1)^{+}=0$ and $(-1)^{-}=-1$,

\emph{(4)}  $(-x)^{+}\oplus 0=-x^{-}\oplus 0$ and $(-x)^{-}\oplus 0=-x^{+}\oplus 0$,

\emph{(5)} $x^{-+}\oplus 0=0=x^{+-}\oplus 0$,

\emph{(6)} $x^{++}\oplus 0=x^{+}\oplus 0$ and $x^{--}\oplus 0=x^{-}\oplus 0$,

\emph{(7)} $x\vee 0=x^{+}\oplus 0$ and $x\wedge 0=x^{-}\oplus 0$,

\emph{(8)} $x^{+}\vee 0=x^{+}\oplus 0$ and $x^{-}\wedge 0=x^{-}\oplus 0$,

\emph{(9)} $x^{+}\wedge 0=0$ and $x^{-}\vee 0=0$,

\emph{(10)} $x\vee x=x\oplus 0$ and $x\wedge x=x\oplus 0$,

\emph{(11)} $x\oplus y=(x\oplus 0)\oplus y=x\oplus (y\oplus 0)=(x\oplus 0)\oplus (y\oplus 0)$,

\emph{(12)} $x\vee y=(x\vee y)\oplus 0=(x\oplus 0)\vee y=x\vee (y\oplus 0)$ and

 \hspace{0.85cm}$x\wedge y=(x\wedge y)\oplus 0=(x\oplus 0)\wedge y=x\wedge (y\oplus 0)$,

\emph{(13)} $x\vee y=(x^{+}\vee y^{+})\oplus(x^{-}\vee y^{-})$ and $x\wedge y=(x^{+}\wedge y^{+})\oplus(x^{-}\wedge y^{-})$,

\emph{(14)} $x^{+}\vee x^{-}=x^{+}\oplus 0$ and $x^{+}\wedge x^{-}=x^{-}\oplus 0$,

\emph{(15)} $x\oplus 0=(x\oplus 0)^{+}\oplus (x\oplus 0)^{-}=(x^{+}\vee x^{-})\oplus(x^{+}\wedge x^{-})=x^+\oplus x^-$.

%(16) $(x\vee y)^+=x^+\vee y^+$ and $(x\vee y)^-=x^-\vee y^-$,
%
% \hspace{0.85cm}$(x\wedge y)^+=x^+\wedge y^+$ and $(x\wedge y)^-=x^-\wedge y^-$,
%
%(17) $(x\wedge y)\vee x=x\oplus 0$ and $(x\wedge y)\vee y=y\oplus 0$,
\end{lem}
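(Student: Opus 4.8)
The plan is to prove the fifteen items in the order listed, deriving each from the quasi-MV* axioms together with the earlier items. The guiding principle throughout is that, by QMV*4, every $\oplus$-output is a fixed point of the projection $x\mapsto x\oplus 0$, yet this projection is \emph{not} the identity; consequently no cancellation or simplification may be performed without justification, and the restricted associativity QMV*2 (which applies only to terms already in the shape $(1\oplus a)\oplus(b\oplus(1\oplus c))$) is the only regrouping tool available.

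First I would settle the constants in (1). From QMV*8 and QMV*7, $0\oplus 0 = 0\oplus(-0) = 0$; from QMV*8 and QMV*1, $(-1)\oplus 1 = 0$, so QMV*3 with $x = -1$ gives $0\oplus 1 = 1$ and then QMV*3 with $x = 0$ gives $1\oplus 1 = 1$; applying $-$ to $1\oplus 0 = 1$ and to $1\oplus 1 = 1$ and invoking QMV*9 and QMV*7 yields $-1\oplus 0 = -1$ and $(-1)\oplus(-1) = -1$. Item (2) is then immediate from QMV*9 and QMV*7. The device for (3) is that, by (1), each $c\in\{0,1,-1\}$ is $\oplus 0$-fixed, so QMV*5 collapses to $c^{+} = c^{+}\oplus 0 = 1\oplus(-1\oplus c)$ and $c^{-} = c^{-}\oplus 0 = -1\oplus(1\oplus c)$, and each right-hand side is evaluated by (1). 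Item (4) follows by unfolding $(-x)^{+}\oplus 0$ through QMV*5 and $-x^{-}\oplus 0$ through (2) and QMV*5, then matching them with QMV*9 and QMV*10.

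The technical heart is items (5), (6) and (11). Items (5)--(6) state that $^{+}$ and $^{-}$ annihilate the opposite sign and are idempotent modulo $\oplus 0$; I would obtain them by unfolding expressions such as $x^{-+}\oplus 0 = 1\oplus(-1\oplus x^{-})$ via QMV*5 and simplifying the nested $1\oplus(-1\oplus(\cdots))$ terms, which is precisely where QMV*2 must be coaxed into applicability. The absorption law (11), $x\oplus y = (x\oplus 0)\oplus y = x\oplus(y\oplus 0)$, is what permits $\oplus$ to ignore the $\oplus 0$-projection in each argument; I would prove it by applying QMV*6 to split both sides into $^{+}$/$^{-}$ parts, using QMV*5 to push $\oplus 0$ through $^{+}$/$^{-}$, and then closing the computation with (5)--(6). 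I expect (11), under the constraint that only the restricted associativity is available, to be the main obstacle: one must verify that the $^{+}$/$^{-}$ splitting terminates to matching normal forms without ever invoking unrestricted associativity.

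Once absorption is available the remaining items are largely mechanical unfoldings of the definition of $\vee$ and its dual $\wedge$. For (7) I would put $y = 0$, replace $0^{\pm}$ by $0$ using (3), show via (4)--(6) that the inner correction term $(-x^{+}\oplus 0)^{+}$ vanishes while the second summand $x^{-}\oplus(-x^{-}\oplus 0)$ collapses to $0$ by (11) and QMV*8, leaving $x\vee 0 = (x^{+}\oplus 0)\oplus 0 = x^{+}\oplus 0$ by QMV*4; items (8)--(9) are the analogous specializations, and (10) is the case $y = x$. Item (12) combines the definition of $\vee$ with QMV*4 and (11); (13) comes from distributing QMV*6 through the definition of $\vee$ and simplifying with (5)--(6); and (14)--(15) are assembled from QMV*6, (3), (11) and the preceding order-theoretic items, yielding the decomposition $x\oplus 0 = (x^{+}\oplus 0)\oplus(x^{-}\oplus 0) = x^{+}\oplus x^{-}$.
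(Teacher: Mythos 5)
Note first that the paper offers no proof of this lemma at all: it is quoted verbatim from \cite{jc1}, so there is no in-paper argument to compare yours against, and your proposal has to stand on its own. Measured that way, the easy part of your plan is fine: the derivations you give for items (1)--(4) are correct and complete (the computation of the constants from (QMV*3), (QMV*8)--(QMV*10), the use of (QMV*5) on the $\oplus 0$-fixed constants for (3), and the matching of $(-x)^{+}\oplus 0$ with $-x^{-}\oplus 0$ via (QMV*9), (QMV*10) for (4) all check out), and your reductions of (7)--(10) and (12)--(15) to the definition of $\vee$ together with (5), (6) and (11) are the right shape.

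The genuine gap is exactly where you yourself locate it, in (5), (6) and above all (11), and it is not closed by the mechanism you propose. For (11), splitting both sides by (QMV*6) and pushing $\oplus 0$ through $^{+}$ and $^{-}$ via (QMV*5) turns the goal $x\oplus(y\oplus 0)=x\oplus y$ into $x^{+}\oplus(y^{+}\oplus 0)=x^{+}\oplus y^{+}$ and $x^{-}\oplus(y^{-}\oplus 0)=x^{-}\oplus y^{-}$, which is an instance of the very identity being proved; iterating the splitting reproduces the same goal one level deeper (for $x^{++}, y^{++}$, etc.), and items (5)--(6), which only control the $\oplus 0$-\emph{images} $x^{+-}\oplus 0$ and $x^{++}\oplus 0$, cannot break this circularity, because in general $x^{+-}\oplus y^{+-}$ is not reducible to $(x^{+-}\oplus 0)\oplus(y^{+-}\oplus 0)$ without (11) itself. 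A non-circular argument has to inject something new --- for instance the instance of (QMV*2) with $z=-1$, which together with (QMV*4) yields $b\oplus(1\oplus c)=(b\oplus 0)\oplus(1\oplus c)$ and its negated twin, i.e.\ absorption against arguments of the special shape $\pm 1\oplus c$ --- and then a separate device for arguments not of that shape; your sketch names the obstacle (``one must verify that the splitting terminates'') but supplies no such device. The same issue infects (5) and (6), whose verification requires a concrete regrouping of terms like $1\oplus(-1\oplus(-1\oplus(1\oplus x)))$ using (QMV*3) and the negated form of (QMV*2), none of which is carried out. Until (5), (6) and (11) are actually derived, the remaining two thirds of the lemma, which you correctly make depend on them, are not established either.
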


\begin{lem}\emph{\cite{jc1}}\label{l0.3}
Let $\emph{\textbf{A}}=\langle A;\oplus,{-},^{+},^{-},0,1\rangle$ be a quasi-MV* algebra. Then for any $x,y,u,v\in A$, we have

\emph{(1)} If $x\leq y$ and $y\leq x$, then $x\oplus 0=y\oplus 0$,

\emph{(2)} $-1\leq x\leq 1$,

\emph{(3)} $x\leq x\oplus 0$ and $x\oplus 0\leq x$,

\emph{(4)} If $x\le y$ and $u\le v$, then $x\oplus u\le y\oplus v$,

\emph{(5)} If $x\leq y$, then $x^{+}\leq y^{+}$ and $x^{-}\leq y^{-}$,

\emph{(6)} If $x\le y$ and $u\le v$, then $x\vee u\le y\vee v$,

\emph{(7)} If $x\le y$ and $u\le v$, then $x\wedge u\le y\wedge v$,

\emph{(8)} $x\leq y$ iff $x\wedge y=x\oplus 0$,

\emph{(9)} If $x\leq y$, then $-y\leq -x$,

\emph{(10)} If $x\oplus 0=1\oplus z$ for some $z\in A$, then $0\leq x$,

\emph{(11)} If $x\oplus 0=-1\oplus z$ for some $z\in A$, then $x\leq 0$,

\emph{(12)} $x\leq 0$ iff $x^{+}\oplus 0=0$, and $x\geq 0$ iff $x^{-}\oplus 0=0$,

\emph{(13)} $x^-\le x\le x^+$.
\end{lem}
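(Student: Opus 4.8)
The order used throughout is the one induced on a quasi-MV* algebra exactly as in the quasi-MV setting, namely $x\le y$ iff $x\vee y=y\oplus 0$, with $\wedge$ the De Morgan dual of $\vee$ (so $x\wedge y=-(\,-x\vee -y)$); my plan is to derive all thirteen items from this definition together with Lemma \ref{l0.1}, which is the real workhorse. The guiding principle is that a quasi-MV* algebra behaves like an MV*-algebra once one works ``modulo $\oplus 0$'', i.e. after replacing each element $z$ by its regularization $z\oplus 0$; almost every identity below is an equality between such regularizations, and the operations $\vee$, $\wedge$, $\oplus$ and $^{+},^{-}$ are all compatible with regularization by Lemma \ref{l0.1}(11)--(12). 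Before the numbered claims I would record two unlisted but essential facts: $\le$ is reflexive, since $x\vee x=x\oplus 0$ by Lemma \ref{l0.1}(10), and transitive, since from $x\vee y=y\oplus 0$ and $y\vee z=z\oplus 0$ one gets $x\vee z=x\vee(y\vee z)=(x\vee y)\vee z=(y\oplus 0)\vee z=y\vee z=z\oplus 0$ using (QMV*13) and Lemma \ref{l0.1}(12).

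With these in hand the foundational items are short. Item (1) is immediate from commutativity (QMV*12): $x\vee y=y\vee x$ forces $y\oplus 0=x\oplus 0$. Item (3) follows because $x\vee(x\oplus 0)=x\vee x=x\oplus 0$ by Lemma \ref{l0.1}(12) and (10), while $(x\oplus 0)\oplus 0=x\oplus 0$ by (QMV*4); commutativity then gives both inequalities. Item (2) is a direct computation of $x\vee 1$ and $-1\vee x$ from the defining formula for $\vee$, using the values $1^{+}=1$, $1^{-}=0$, $(-1)^{+}=0$, $(-1)^{-}=-1$ from Lemma \ref{l0.1}(3). Item (12) is also quick: since $x\vee 0=x^{+}\oplus 0$ by Lemma \ref{l0.1}(7), the condition $x\le 0$ unwinds to $x^{+}\oplus 0=0$; for $x\ge 0$ I would combine $0\vee x=x^{+}\oplus 0$ with the decomposition $x\oplus 0=x^{+}\oplus x^{-}$ of Lemma \ref{l0.1}(15), the backward direction being immediate and the forward one using the sign idempotencies Lemma \ref{l0.1}(5)--(6).

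The monotonicity statements (4)--(7) all share one shape: vary a single argument and then invoke transitivity. For (4) I would use the distributive law (QMV*14) and commutativity to write $(x\oplus u)\vee(y\oplus u)=u\oplus(x\vee y)=u\oplus(y\oplus 0)=(y\oplus u)\oplus 0$, giving $x\oplus u\le y\oplus u$; the analogous step in the second argument plus transitivity yields (4). Items (6)--(7) repeat this with $\vee$/$\wedge$ associativity and commutativity (QMV*12)--(QMV*13) in place of (QMV*14), and (5) is obtained by feeding $x\vee y=y\oplus 0$ into the component decomposition $x\vee y=(x^{+}\vee y^{+})\oplus(x^{-}\vee y^{-})$ of Lemma \ref{l0.1}(13) and reading off positive and negative parts. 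Item (13) is likewise a computation with Lemma \ref{l0.1}(13): expanding $x\vee x^{+}$ and using $x^{++}\oplus 0=x^{+}\oplus 0$, $x^{+-}\oplus 0=0$ (Lemma \ref{l0.1}(5)--(6)) together with $x^{-}\vee 0=0$ (Lemma \ref{l0.1}(9)) collapses it to $x^{+}\oplus 0$, so $x\le x^{+}$, the inequality $x^{-}\le x$ being dual.

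The genuinely delicate points are (8)--(11). For (8) I would first establish the two absorption laws $x\wedge(x\vee y)=x\oplus 0$ and $x\vee(x\wedge y)=x\oplus 0$ modulo regularization, reducing them via Lemma \ref{l0.1}(13) to the corresponding MV*-identities on the positive and negative components; once absorption is available, the equivalence $x\vee y=y\oplus 0\Leftrightarrow x\wedge y=x\oplus 0$ is the usual lattice argument carried out mod $\oplus 0$. Item (9) then drops out of (8): applying $-$ to $x\vee y=y\oplus 0$ and using the De Morgan dual together with Lemma \ref{l0.1}(2) gives $(-x)\wedge(-y)=(-y)\oplus 0$, which is exactly the $\wedge$-criterion for $-y\le -x$. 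Finally (10), and dually (11), is where I expect the main obstacle: one must exploit $x\oplus 0=1\oplus z$ to force the negative part to vanish. Concretely I would compute $x^{-}\oplus 0=(x\oplus 0)^{-}=(1\oplus z)^{-}=-1\oplus\bigl(1\oplus(1\oplus z)\bigr)$ via (QMV*5), collapse $1\oplus(1\oplus z)=1$ using commutativity and (QMV*3), and conclude $x^{-}\oplus 0=-1\oplus 1=0$ by (QMV*8), so that $x\ge 0$ follows from (12). Keeping the bookkeeping of the $^{+},^{-}$ operations and the regularization straight through this last computation, and verifying the absorption laws underlying (8), are the steps I expect to demand the most care.
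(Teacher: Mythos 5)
This lemma is imported verbatim from \cite{jc1} and the paper you were given contains no proof of it, so there is nothing internal to compare your argument against; I can only assess your reconstruction on its own terms. Your setup is the right one: the order on a quasi-MV* algebra is indeed $x\le y$ iff $x\vee y=y\oplus 0$ (this is consistent with the paper's definition for quasi-Wajsberg* algebras, $x\le y$ iff $x\vee y=0\to y$, under the translation $0\to y=y\oplus 0$), and deriving reflexivity and transitivity from Lemma \ref{l0.1}(10), (12) and (QMV*13) before anything else is exactly what is needed. I checked the individual items: (1), (3), (4), (6), (8), (9), (12) and (13) go through as you describe, and your computation for (10)--(11) --- $x^{-}\oplus 0=(x\oplus 0)^{-}=((1\oplus z)\oplus 0)^{-}=-1\oplus(1\oplus(1\oplus z))=-1\oplus 1=0$ via (QMV*5), (QMV*3) and (QMV*8), then invoking (12) --- is correct and is the natural way to do it. Two places are thinner than the rest. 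For (5), ``reading off positive and negative parts'' from $(x^{+}\vee y^{+})\oplus(x^{-}\vee y^{-})=y^{+}\oplus y^{-}$ is not automatic, since $\oplus$ does not cancel; the clean route is to apply $^{+}$ (resp.\ $^{-}$) to $x\vee y=y\oplus 0$ and use the identity $(x\vee y)^{+}\oplus 0=(x^{+}\vee y^{+})\oplus 0$ (the quasi-MV* analogue of Proposition \ref{pr.4}(6)), which itself requires the same component bookkeeping as your absorption laws in (8). For (2), calling $x\vee 1=1$ a ``direct computation'' understates it: the first component $x^{+}\oplus(-x^{+}\oplus 1)^{+}$ collapses to $1$ only after you observe that $-x^{+}\oplus 1$ is of the form $1\oplus w$, hence nonnegative by your own item (10), hence fixed by $^{+}$ modulo $\oplus 0$, and then apply (QMV*2) and (QMV*3); so (2) actually depends on (10) and (12) and should be ordered after them. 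With those two elaborations and a note that (7) should formally come after (8) (or be derived from (6) and (9) by De Morgan), your sketch is a sound proof of the lemma.
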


In \cite{c2}, the corresponding logical system $\L^{*}$ of MV*-algebras was introduced by Chang. Later, the logical system was investigated further in \cite{ls,lsm2}. Below we list the axioms and rules of $\L^*$.

\noindent\textbf{Axioms schemas}

(P1) $(p\to q)\leftrightarrow (\neg q\to \neg p)$,

(P2) $p\leftrightarrow ((q\to q)\to p)$,

(P3) $\neg(p\to q)\leftrightarrow (q\to p)$,

(P4) $p\to 1$,

(P5) $1\leftrightarrow ((1\to p)\to 1)$,

(P6) $((p\to 1)\to((q\to 1)\to r))\to ((q\to 1)\to((p\to 1)\to r))$,

(P7) $(p\to q)\leftrightarrow((q^{+}\to p^{-})\to (p^{+}\to q^{-}))$,

(P8) $(p\to (\neg p\to q))^+\leftrightarrow (p^+\to (\neg p^+\to q^+))$,

(P9) $(p\to (q\vee r))\leftrightarrow((p\to r)\vee (p\to q))$,

(P10) $(p\vee (q\vee r))\leftrightarrow ((p\vee q)\vee r)$,

\noindent in which $p^{+}=(p\to 1)\to 1$, $p^{-}=(p\to \neg 1)\to \neg 1$ and $p\vee q = ((p^{+}\to q^{+})^{+})\to(\neg p)^{-} )\to((q^{-}\to p^{-})^{-}\to p^{-})$. Moreover, $p\leftrightarrow q$ means that $p\to q$ and $q\to p$.

\noindent\textbf{Rules of deduction}

(M1) $p, p\to q\vdash_{\tiny\L^{*}} q$,

(M2) $p\to q, r\to t\vdash_{\tiny\L^{*}} (q\to r)\to (p\to t)$,

(M3) $p\vdash_{\tiny\L^{*}} p^-$.

%%%%%%%%%%%%%%%%%%%%%%%%%%%%%%%%%%%%%%%%%%
\section{Quasi-Wajsberg* algebras}\label{Sec-qw}

In this section, we introduce the notion of a quasi-Wajsberg* algebra and study the related properties. We also show that quasi-Wajsberg* algebras and quasi-MV* algebras are term equivalence.

\begin{defn}\label{dqw}
Let $\textbf{W}=\langle W;\to,\neg,^{+},^{-},1\rangle$ be an algebra of type $\langle2,1,1,1,0\rangle$. If the following conditions are satisfied for any ${x,y,z}\in W$,

(QW*1) $x\to y=\neg y\to \neg x$,

(QW*2) $(x\to 1)\to((y\to 1)\to z)=(y\to 1)\to((x\to 1)\to z)$,

(QW*3) $(1\to x)\to 1=1$,

(QW*4) $(z\to z)\to (x\to y)=x\to y$,

(QW*5) $(1\to 1)\to x^{+}=((1\to 1)\to x)^{+}=(x\to 1)\to 1$ and

\hspace{1.4cm} $(1\to 1)\to x^{-}=((1\to 1)\to x)^{-}=(x\to \neg 1)\to \neg 1$,

(QW*6) $x\to y=(y^{+}\to x^{-})\to (x^{+}\to y^{-})$,

(QW*7) $\neg(x\to y)= y\to x$,

(QW*8) $\neg\neg x=x$,

(QW*9) $(x\to (\neg x\to y))^{+}=x^{+}\to (\neg x^{+}\to y^{+})$,

(QW*10) $x\vee y=y\vee x$,

(QW*11) $x\vee (y\vee z)=(x\vee y)\vee z$,

(QW*12) $x\to (y\vee z)=(x\to y)\vee (x\to z)$,

\noindent in which $x\vee y = ((x^{+}\to y^{+})^{+}\to(\neg x)^{-}) \to((y^{-}\to x^{-})^{-}\to x^{-})$, then $\textbf{W}=\langle W;\to,\neg,^{+},^{-},1\rangle$ is called a \emph{quasi-Wajsberg* algebra}.
\end{defn}

\begin{exam}\label{e0.2}
Let $R^{*}=[-1,1]\times [-1,1]$ and operations be defined as follows. For any $\langle a, b\rangle, \langle c, d\rangle\in R^{*}$,

$\langle a, b\rangle\to \langle c, d\rangle=\langle \min\{1, \max\{-1,c-a\}\}, 0\rangle$,

$\neg \langle a, b\rangle=\langle -a, -b\rangle$,

$\langle a, b\rangle^{+}=\langle \max\{0, a\}, b\rangle$,

$\langle a, b\rangle^{-}=\langle \min\{0, a\}, b\rangle$,

$1_{\scriptscriptstyle \mathbf{R}^*}=\langle 1, 0\rangle$.

Then $\mathbf{R}^*= \langle R^{*}; \to, \neg, ^{+}, ^{-}, 1_{\scriptscriptstyle \mathbf{R}^*} \rangle$ is a quasi-Wajsberg* algebra.
\end{exam}

In the following, we abbreviate the quasi-Wajsberg* algebra $\textbf{W}=\langle W;\to,\neg,^{+},^{-},1\rangle$ as $\textbf{W}$. For any quasi-Wajsberg* algebra, we consider that the operations $^+$ and $^-$ (which have the same priority) have priority to the operations $\to$ and $\neg$, the operation $\neg$ has priority to the operation $\to$.

\begin{rem}\label{n0.1}
For a quasi-Wajsberg* algebra $\mathbf{W}$ and any $x,y\in W$, we define an operation $x\wedge y=\neg(\neg x\vee \neg y)$, then the operation $\wedge$ satisfies commutative law and associative law by (QW*10), (QW*8) and (QW*11). Below, the operation $\to$ has priority to the operations $\vee$ and $\wedge$.
\end{rem}

\begin{prop}\label{l0.2}
Let $\emph{\textbf{W}}$ be a quasi-Wajsberg* algebra. Then for any $x,y\in W$, we have

\emph{(1)} $\neg x\to y=\neg y\to x$ and $x\to \neg y=y\to\neg x$,

\emph{(2)} $\neg (x\to y)=\neg x\to \neg y$,

\emph{(3)} $x\to x=y\to y$,

\emph{(4)} $\neg(x\to x)=x\to x$.
\end{prop}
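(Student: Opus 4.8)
The plan is to dispatch parts (1), (2), and (4) as short manipulations of the involution axioms (QW*1), (QW*7), (QW*8), and to reserve the real work for part (3).

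For part (1), I would apply the contraposition axiom (QW*1) and then cancel a double negation via (QW*8). Reading (QW*1) as $a\to b=\neg b\to\neg a$ and substituting $a=\neg x$, $b=y$ gives $\neg x\to y=\neg y\to\neg\neg x=\neg y\to x$; substituting instead $a=y$, $b=\neg x$ gives $y\to\neg x=\neg\neg x\to\neg y=x\to\neg y$, which is the second identity. For part (2), I would chain (QW*7) with (QW*1): by (QW*7) we have $\neg(x\to y)=y\to x$, and applying (QW*1) to the right-hand side yields $y\to x=\neg x\to\neg y$, whence $\neg(x\to y)=\neg x\to\neg y$. Part (4) is immediate from (QW*7) by setting $y=x$, giving $\neg(x\to x)=x\to x$; I would actually record this fact \emph{before} part (3), since it is the crucial ingredient there.

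For part (3), the key observation is that (QW*4) makes $z\to z$ a left identity for arbitrary implications, namely $(z\to z)\to(x\to y)=x\to y$. Writing $e_w:=w\to w$ and noting that $e_w$ is itself an implication, (QW*4) yields $e_z\to e_w=e_w$ for all $z,w$; in particular $e_x\to e_y=e_y$ and $e_y\to e_x=e_x$. On the other hand, the fact recorded from part (4) gives $\neg e_x=e_x$ and $\neg e_y=e_y$, so applying (QW*1) to $e_x\to e_y$ produces $e_x\to e_y=\neg e_y\to\neg e_x=e_y\to e_x$. Combining the two computations forces $e_y=e_x\to e_y=e_y\to e_x=e_x$, i.e. $x\to x=y\to y$.

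I expect part (3) to be the main obstacle, as it is the only item that is not a one-line rewriting of the involution axioms. The difficulty is that one must first recognize that $z\to z$ behaves as a left identity (QW*4), then pair this with the self-duality $\neg(x\to x)=x\to x$, so that contraposition (QW*1) can be invoked to flip $e_x\to e_y$ into $e_y\to e_x$ and thereby collapse the two candidate "identity" elements into one.
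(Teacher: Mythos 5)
Your proposal is correct and follows essentially the same route as the paper: parts (1), (2), (4) are the same one-line applications of (QW*1), (QW*7), (QW*8), and part (3) rests on the same two ingredients, namely the left-identity property (QW*4) and the self-duality $\neg(x\to x)=x\to x$ coming from (QW*7). The only cosmetic difference is that you establish (4) directly from (QW*7) and feed it into (3) via contraposition, whereas the paper runs (3) as a single chain using (QW*7) to flip $(x\to x)\to(y\to y)$ and then deduces (4) from (2) and (3); both orderings are sound.
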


\begin{proof}
(1) We have $\neg x\to y=\neg x\to \neg\neg y=\neg y\to x$ and $x\to \neg y=\neg\neg x\to \neg y=y\to\neg x$ by (QW*8) and (QW*1).

(2) We have $\neg(x\to y)=y\to x=\neg\neg y\to \neg\neg x=\neg x\to \neg y$ by (QW*7), (QW*8) and (QW*1).

(3) By (QW*4) and (QW*7), we have $y\to y=(x\to x)\to (y\to y)=\neg((y\to y)\to(x\to x))=\neg(x\to x)=x\to x$.

(4) We have $\neg (x\to x)=\neg x \to \neg x=x\to x$ by (2) and (3).
\end{proof}

\begin{rem}\label{r1.1}
Let $\textbf{W}$ be a quasi-Wajsberg* algebra. We define $0=1\to 1$ in $\textbf{W}$. From Proposition \ref{l0.2}(3) and (4), we have $0=x\to x$ for any $x\in W$ and $0=\neg 0$. Therefore, (QW*5) can be written as $0\to x^+ = (0\to x)^+ = (x\to 1)\to 1$ and $0\to x^- = (0\to x)^- = (x\to \neg1)\to \neg1$.
\end{rem}

\begin{prop}\label{pr.1}
Let $\emph{\textbf{W}}$ be a quasi-Wajsberg* algebra. Then we have

\emph{(1)} $0\to 1=1$ and $0\to \neg1=\neg 1$,

\emph{(2)} $1\to 0=\neg1 $ and $\neg1 \to 0=1$,

\emph{(3)} $\neg1\to 1=1$ and $1\to \neg1=\neg1$,

\emph{(4)} $0^+ = 0=0^-$, $1^+ =1$, $(\neg1)^- = \neg1$.

\emph{(5)} $1^- = 0=(\neg1)^+$.
\end{prop}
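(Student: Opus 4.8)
The plan is to establish each of the five identities directly from the axioms (QW*1)--(QW*9) together with the already-proved Proposition \ref{l0.2} and the conventions fixed in Remark \ref{r1.1}, where $0 := 1\to 1 = x\to x$ and $0=\neg 0$. Since these identities concern the special elements $0$, $1$, $\neg 1$ and how $^{+}$, $^{-}$ interact with them, the natural strategy is to specialize the general axioms at $x=1$ or use the rewritten form of (QW*5) from Remark \ref{r1.1}.

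For part (1), I would compute $0\to 1$ by invoking (QW*3), which gives $(1\to x)\to 1=1$; combined with the $\neg$-duality $\neg(x\to y)=y\to x$ from (QW*7) and the fact that $\neg 1\to 1$ type identities follow by applying $\neg$, I expect $0\to 1=1$ to drop out after noting $0=x\to x$ and choosing $x$ appropriately, while $0\to\neg 1=\neg 1$ follows by applying $\neg$ to both sides using Proposition \ref{l0.2}(2). For part (2), the identity $1\to 0=\neg 1$ should come from (QW*7): since $\neg(1\to x)=x\to 1$, setting suitable values and using $0=1\to 1$ gives the contraposition, and $\neg 1\to 0=1$ then follows by applying $\neg$ and (QW*8). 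Part (3) is the pair $\neg 1\to 1=1$ and $1\to\neg 1=\neg 1$; the first follows from Proposition \ref{l0.2}(1), namely $\neg 1\to 1=\neg 1\to 1$ rewritten as $\neg 1\to 1=\neg(\text{something})$, or more directly from (QW*3) after a contraposition via (QW*1), and the second is its $\neg$-image.

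The heart of the proposition is parts (4) and (5), which pin down the fixed points of $^{+}$ and $^{-}$. Here I would use the rewritten (QW*5) from Remark \ref{r1.1}: $0\to x^{+}=(x\to 1)\to 1$ and $0\to x^{-}=(x\to\neg 1)\to\neg 1$. First I must check that $0\to y=y$ holds for the relevant $y$ (or at least $0\to y=y\oplus 0$-type absorption) so that $0\to x^{+}$ collapses to $x^{+}$; this should follow from (QW*4), which says $(z\to z)\to(x\to y)=x\to y$, i.e.\ $0\to(x\to y)=x\to y$, so whenever $x^{+}$ is itself of the form $u\to v$ the left factor $0$ absorbs. Substituting $x=0$, $x=1$, and $x=\neg 1$ into the (QW*5) formulas and using parts (1)--(3) to evaluate the right-hand sides $(x\to 1)\to 1$ and $(x\to\neg 1)\to\neg 1$ should yield $0^{+}=0$, $0^{-}=0$, $1^{+}=1$, $(\neg 1)^{-}=\neg 1$, and the crossed vanishing identities $1^{-}=0$ and $(\neg 1)^{+}=0$ in part (5).

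The main obstacle I anticipate is the bookkeeping around the absorption element $0$: the axioms give $^{+}$ and $^{-}$ only up to $0\to(-)$, so I cannot freely assert $0\to x^{+}=x^{+}$ without first confirming it for the specific instances, and getting the direction of the contrapositions in (QW*1) and (QW*7) correct for $\neg 1$ (as opposed to $1$) requires care, since $\neg$ and $^{-}$ must not be conflated. I would therefore carry out parts (1)--(3) fully first so that every right-hand side appearing in the (QW*5) computations of parts (4)--(5) is already reduced to $0$, $1$, or $\neg 1$, after which (4) and (5) become short substitutions.
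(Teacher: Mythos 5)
Your overall route is the paper's: derive (1)--(3) from (QW*3), (QW*7), (QW*8) and Proposition \ref{l0.2}, then specialize (QW*5) at $x=0,1,\neg 1$ to obtain (4)--(5). The one step that would not go through as you describe it is removing the prefix in $0\to x^{+}$ by appeal to (QW*4): that axiom absorbs $0\to$ only in front of terms of the form $u\to v$, and $x^{+}$ is not known to be of that form --- that $x^{+}$ \emph{equals} an implication for these particular $x$ is essentially the content of (4)--(5), so the appeal is circular (knowing $0\to 0^{+}=0$ does not by itself yield $0^{+}=0$). The paper sidesteps this by using the middle term of (QW*5), namely $((1\to 1)\to x)^{+}=(x\to 1)\to 1$: since $0\to x=x$ for $x\in\{0,1,\neg 1\}$ by part (1) and $0=0\to 0$, the left-hand side is literally $x^{+}$, and the right-hand side reduces to $0$, $1$, $\neg 1$ or $0$ via (1)--(3). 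Rerouting your computation through that equality (and dually for $^{-}$) closes the gap; everything else in your plan matches the paper's proof.
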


\begin{proof}
(1) Since $0=1\to 1$, we have $0\to 1 = (1\to 1)\to 1=1$ by (QW*3). Moreover, we have $0\to \neg 1=\neg 0 \to \neg 1=\neg (0\to 1)=\neg 1$ from Remark \ref{r1.1} and Proposition \ref{l0.2}(2).

(2) We have $1\to 0 = \neg (0\to 1)=\neg 1$ from (QW*7) and (1). Moreover, we have $\neg 1\to 0 = \neg 0 \to 1 = 0\to 1 =1$ from Proposition \ref{l0.2}(1), Remark \ref{r1.1} and (1).

(3) We have $\neg1\to 1= \neg(0\to 1) \to 1= (1\to 0)\to 1=1$ from (1), (QW*7) and (QW*3). Moreover, we have $1\to \neg1 = \neg \neg 1\to \neg1 = \neg (\neg1\to 1)=\neg 1$ from (QW*8) and Proposition \ref{l0.2}(2).

(4) Since $0=0\to 0$, we have $0^+ =(0\to 0)^+ = (0\to 1)\to 1=1\to 1=0$ and $0^- =(0\to 0)^- = (0\to \neg1)\to \neg1 =\neg 1 \to \neg 1=0$ from (QW*5) and (1). Moreover, we have $1^+ = (0\to 1)^+ = (1\to 1)\to 1=0\to 1=1$ and $(\neg1)^- = (0\to \neg1)^- = (\neg1 \to \neg1)\to \neg1 = 0\to \neg1 =\neg1$ from (1) and (QW*5).

(5) We have $1^- = (0\to 1)^- = (1\to \neg1)\to \neg1 = \neg1 \to \neg1 = 0$ and $(\neg1)^+ = (0\to \neg1)^+ = (\neg1 \to 1)\to 1 =1\to 1=0$ from (1), (QW*5), (3) and Remark \ref{r1.1}.
\end{proof}

\begin{lem}\label{le.1}
Let $\emph{\textbf{W}}$ be a quasi-Wajsberg* algebra. Then for any $x,y \in W$, we have

\emph{(1)} $0\to (x\to y)=x\to y$ and $0\to \neg(x\to y)=\neg(x\to y)$,

\emph{(2)} $(x\to y)\to 0 =\neg(x\to y)$ and $\neg(x\to y)\to 0=x\to y$.
\end{lem}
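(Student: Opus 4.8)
The goal is to prove Lemma \ref{le.1}, which asserts four identities governing how $0$ interacts with terms of the form $x \to y$ and their negations. The plan is to exploit the fundamental observation recorded in Remark \ref{r1.1} that $0 = 1 \to 1$ behaves as a self-dual element ($0 = \neg 0$), combined with (QW*4), which says that $(z\to z)\to(x\to y) = x\to y$, and the results of Proposition \ref{pr.1}.

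For part (1), I would start from (QW*4). Since $0 = z\to z$ for any $z$ by Remark \ref{r1.1}, the axiom (QW*4) literally reads $0 \to (x\to y) = x\to y$, giving the first identity immediately. For the second identity $0 \to \neg(x\to y) = \neg(x\to y)$, the cleanest route is to use Proposition \ref{l0.2}(2), which states $\neg(x\to y) = \neg x \to \neg y$; this rewrites $\neg(x\to y)$ as an implication, so the already-proved identity $0\to(u\to v)=u\to v$ applies with $u = \neg x$, $v = \neg y$. Alternatively one can apply $\neg 0 = 0$ together with Proposition \ref{l0.2}(2) to the first identity directly.

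For part (2), the two identities $(x\to y)\to 0 = \neg(x\to y)$ and $\neg(x\to y)\to 0 = x\to y$ should follow by relating "implication into $0$" with negation. The key tool is (QW*7), $\neg(u\to v) = v\to u$, together with (QW*1) or Proposition \ref{l0.2}(1). Concretely, for the first I would write $0 = \neg 0$ and use Proposition \ref{l0.2}(1) (the form $x\to\neg y = y\to\neg x$) or (QW*7) to convert $(x\to y)\to 0$ into $\neg\bigl(0\to(x\to y)\bigr)$, then apply part (1) and strip the double negation via (QW*8). The second identity of part (2) then follows either by the same manipulation applied to $\neg(x\to y)$, or more economically by negating the first identity: apply $\neg$ to both sides of $(x\to y)\to 0 = \neg(x\to y)$, use Proposition \ref{l0.2}(2) on the left and (QW*8) on the right.

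I do not anticipate a genuinely hard step here; these are short formal derivations in the equational theory. The one place requiring care is keeping the chain of rewrites in part (2) consistent, since several nearly interchangeable identities (QW*1), (QW*7), and Proposition \ref{l0.2}(1) all manipulate $\neg$ against $\to$, and it is easy to invoke the wrong variant and flip an argument order. The safeguard is to reduce everything back to the single established fact $0\to(u\to v) = u\to v$ from part (1) and to the self-duality $\neg 0 = 0$, rather than juggling multiple axioms simultaneously.
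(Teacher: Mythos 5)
Your proposal is correct and follows essentially the same route as the paper: part (1) is obtained from (QW*4) with $0=z\to z$ and then from $\neg(x\to y)=\neg x\to\neg y$, and part (2) reduces $(x\to y)\to 0$ and $\neg(x\to y)\to 0$ back to part (1) using $\neg 0=0$ together with one of the interchangeable contraposition/negation identities (the paper happens to use (QW*1) where you lean on (QW*7), but the computation is the same two-line rewrite). Your closing caution about not mixing up the variants is well placed, and your fallback of reducing everything to $0\to(u\to v)=u\to v$ is exactly what the paper does.
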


\begin{proof}
(1) From (QW*4) and Remark \ref{r1.1}, we can get $0\to (x\to y)=x\to y$ immediately. Moreover, we have $0\to \neg(x\to y)= 0\to (\neg x\to \neg y)= \neg x\to \neg y=\neg(x\to y)$ from Proposition \ref{l0.2}(2).

(2) We have $(x\to y)\to 0 = \neg0 \to \neg(x\to y) = 0 \to \neg(x\to y)= \neg(x\to y)$ from (QW*1), Remark \ref{r1.1} and (1). Moreover, we have $\neg(x\to y)\to 0= \neg 0\to \neg \neg(x\to y) = 0\to (x\to y)=x\to y$ from (QW*1), Remark \ref{r1.1}, (QW*8) and (1).
\end{proof}

\begin{prop}\label{pr.3}
Let $\emph{\textbf{W}}$ be a quasi-Wajsberg* algebra. Then for any $x,y,z \in W$, we have

\emph{(1)} $x\vee x=0\to x$ and $x\wedge x=0\to x$,

\emph{(2)} $x\to (y\wedge z)=(x\to y)\wedge (x\to z)$, $(x\wedge y)\to z=(x\to z)\vee (y\to z)$ and $(x\vee y)\to z=(x\to z)\wedge (y\to z)$,

\emph{(3)} $x\to y=(0\to x)\to y=x\to(0\to y)=(0\to x)\to(0\to y)$,

\emph{(4)} $x\vee y=0\to (x\vee y)=(0\to x)\vee y=x\vee(0\to y)=(0\to x)\vee (0\to y)$ and $x\wedge y=0\to (x\wedge y)=(0\to x)\wedge y=x\wedge(0\to y)=(0\to x)\wedge (0\to y)$.
\end{prop}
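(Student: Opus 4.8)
The plan is to prove the four items in the order stated, since each later item feeds on the earlier ones. Throughout I will use two quick preliminary observations. First, $0\to\neg w=\neg(0\to w)$ for every $w$ (apply Proposition \ref{l0.2}(2) together with $\neg 0=0$ and (QW*8)). Second, each of $x\vee y$, $x\wedge y$ and $0\to y$ is a \emph{top-level implication}: for $x\vee y$ this is visible from its defining expression; for $x\wedge y=\neg(\neg x\vee\neg y)$ it follows from $\neg(a\to b)=b\to a$ (QW*7); and for $0\to y$ it follows from (QW*6), which gives $0\to y=(y^{+}\to 0)\to(0\to y^{-})$ after using $0^{+}=0^{-}=0$ (Proposition \ref{pr.1}(4)). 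Consequently Lemma \ref{le.1}(1) lets me collapse a leading $0\to(\,\cdot\,)$ in front of any of these terms.

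For (1) I compute $x\vee x$ straight from the definition. Since $x^{+}\to x^{+}=0=x^{-}\to x^{-}$ (Proposition \ref{l0.2}(3)) and $0^{+}=0=0^{-}$, the expression collapses to $x\vee x=(0\to(\neg x)^{-})\to(0\to x^{-})$. On the other side, (QW*6) applied to $0\to x$ gives $0\to x=(x^{+}\to 0)\to(0\to x^{-})$, and $x^{+}\to 0=0\to\neg x^{+}$ by (QW*1). So the two agree once I prove the auxiliary identity $0\to(\neg x)^{-}=0\to\neg x^{+}$, which I establish by reducing both sides to a common form: via Remark \ref{r1.1} and (QW*1) the left side becomes $(1\to x)\to\neg 1$, via the first observation, Remark \ref{r1.1} and (QW*7) the right side becomes $1\to(x\to 1)$, and these coincide by one more use of (QW*1) and (QW*7). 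The companion $x\wedge x=0\to x$ then follows painlessly: $x\wedge x=\neg(\neg x\vee\neg x)=\neg(0\to\neg x)=\neg\neg(0\to x)=0\to x$, using the $\vee$-case just proved together with the first observation and (QW*8).

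Item (2) is pure De Morgan bookkeeping. For $x\to(y\wedge z)$ I rewrite the right-hand side as $\neg((\neg x\to\neg y)\vee(\neg x\to\neg z))$ via Proposition \ref{l0.2}(2), fold it with (QW*12) into $\neg(\neg x\to(\neg y\vee\neg z))$, and finish with $\neg(\neg x\to w)=x\to\neg w$ (a consequence of (QW*7) and (QW*1)). The identity $(x\wedge y)\to z=(x\to z)\vee(y\to z)$ comes from turning the antecedent around by (QW*1), applying (QW*12) to $\neg z\to(\neg x\vee\neg y)$, and translating back with (QW*1). The last identity $(x\vee y)\to z=(x\to z)\wedge(y\to z)$ I then deduce from the first one after noting $\neg(x\vee y)=\neg x\wedge\neg y$ (by (QW*8) and the definition of $\wedge$).

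Items (3) and (4) are where the earlier parts pay off, with no further reference to the defining term of $\vee$. For (3) I read $0\to x$ as $x\vee x$ and $0\to y$ as $y\vee y$ (item (1)) and push the connective through: $x\to(0\to y)=x\to(y\vee y)=(x\to y)\vee(x\to y)=0\to(x\to y)=x\to y$ by (QW*12), item (1), and Lemma \ref{le.1}(1); symmetrically $(0\to x)\to y=(x\vee x)\to y=(x\to y)\wedge(x\to y)=x\to y$ using the third identity of item (2); and the mixed case is the composition of the two. For (4) the key equality is $x\vee y=0\to(x\vee y)=(0\to x)\vee(0\to y)$, the first step being Lemma \ref{le.1}(1) (as $x\vee y$ is an implication) and the second being (QW*12) with antecedent $0$. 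Calling this $(\star)$, the remaining two equalities follow by re-applying $(\star)$ with $0\to x$ in place of $x$ (or $0\to y$ in place of $y$) together with the idempotency $0\to(0\to x)=0\to x$, again from Lemma \ref{le.1}(1); the $\wedge$-chain is identical, with (QW*12) replaced by the first identity of item (2) and $x\wedge y$ recognized as an implication. The main obstacle is the single hand computation $0\to(\neg x)^{-}=0\to\neg x^{+}$ inside item (1): it is the one place where the interaction between negation and the $^{+},^{-}$ operations must be unwound explicitly, and it is exactly what makes the reduction of $x\vee x$ to $0\to x$ go through; everything in (3) and (4) is then formal.
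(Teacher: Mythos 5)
Your proof is correct and follows essentially the same route as the paper's: item (1) is obtained by expanding $x\vee x$ from its defining term and matching the result against the (QW*6) expansion of $0\to x$, item (2) is the same De Morgan bookkeeping with (QW*12) and (QW*1), and items (3)--(4) are derived formally from (1)--(2) together with Lemma \ref{le.1}(1). The only cosmetic difference is that you isolate the auxiliary identity $0\to(\neg x)^{-}=0\to\neg x^{+}$ (which the paper establishes separately, later, as Proposition \ref{pr.2}(1)) and verify it by hand, whereas the paper inlines the equivalent computation via Proposition \ref{l0.2}(2) and (QW*7); both verifications are sound and use only material available before this proposition.
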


\begin{proof}
(1) We have
\begin{align*}
x\vee x &=((x^+\to x^+)^+\to (\neg x)^-)\to ((x^-\to x^-)^-\to x^-)  \\
&=(0^+\to (\neg x)^-)\to (0^-\to x^-) &&\text{(by Remark \ref{r1.1})} \\
&=(0\to (\neg x)^-)\to (0\to x^-) &&\text{(by Proposition \ref{pr.1}(4))} \\
&=((\neg x \to \neg1)\to \neg1)\to (0\to x^-) &&\text{(by Remark \ref{r1.1})}\\
&=\neg ((x\to 1)\to 1)\to (0\to x^-) &&\text{(by Proposition \ref{l0.2}(2))}\\
&=\neg (0\to x^+)\to (0\to x^-) &&\text{(by Remark \ref{r1.1})} \\
&=(x^+\to 0)\to (0\to x^-) &&\text{(by (QW*7))} \\
&=(x^+\to 0^-)\to (0^+\to x^-) &&\text{(by Proposition \ref{pr.1}(4))} \\
&=0\to x &&\text{(by (QW*6))}.
\end{align*}
Based on this result, we can get
\begin{align*}
x\wedge x &=\neg(\neg x\vee \neg x) &&\text{(by Remark \ref{n0.1})}\\
&=\neg(0\to \neg x) &&\text{(by (1))}\\
&=\neg 0\to \neg\neg x &&\text{(by Proposition \ref{l0.2}(2))}\\
&=0\to x &&\text{(by Remark \ref{r1.1} and (QW*8))}.
\end{align*}

(2) We have
\begin{align*}
x\to (y\wedge z)&=x\to \neg(\neg y\vee \neg z) &&\text{(by Remark \ref{n0.1})}\\
&=\neg\neg x\to \neg(\neg y\vee \neg z) &&\text{(by (QW*8))}\\
&=\neg (\neg x\to (\neg y\vee \neg z))  &&\text{(by Proposition \ref{l0.2}(2))}\\
&=\neg ((\neg x\to \neg y)\vee (\neg x\to \neg z))  &&\text{(by (QW*12))}\\
&=\neg (\neg(x\to y)\vee \neg(x\to z))  &&\text{(by Proposition \ref{l0.2}(2))}\\
&=(x\to y)\wedge (x\to z) &&\text{(by Remark \ref{n0.1})},
\end{align*}
similarly, we can get that
\begin{align*}
(x\wedge y)\to z &=\neg(\neg x\vee \neg y) \to z &&\text{(by Remark \ref{n0.1})}\\
&=\neg z\to \neg(\neg(\neg x\vee \neg y)) &&\text{(by (QW*1))}\\
&=\neg z\to (\neg x\vee \neg y) &&\text{(by (QW*8))}\\
&=(\neg z\to \neg x)\vee(\neg z\to \neg y) &&\text{(by (QW*12))}\\
&=(x\to z)\vee(y\to z) &&\text{(by (QW*1))}
\end{align*}
and
\begin{align*}
(x\vee y)\to z &=\neg z\to \neg(x\vee y) &&\text{(by (QW*1))}\\
&=\neg z\to \neg(\neg (\neg x) \vee \neg (\neg y)) &&\text{(by (QW*8))}\\
&=\neg z\to (\neg x\wedge \neg y) &&\text{(by Remark \ref{n0.1})}\\
&=(\neg z\to \neg x)\wedge (\neg z\to \neg y) &&\text{(by Remark \ref{n0.1})}\\
&=(x\to z)\wedge (y\to z) &&\text{(by (QW*1))}.
\end{align*}

(3) We have
\begin{align*}
(0\to x)\to y &=(x\vee x)\to y &&\text{(by (1))}\\
&=(x\to y)\wedge (x\to y) &&\text{(by (2))}\\
&=0\to (x\to y) &&\text{(by (1))}\\
&=x\to y &&\text{(by Lemma \ref{le.1}(1))}.
\end{align*}
The others can be proved similarly.

(4) We have
\begin{align*}
x\vee y &=0\to (x\vee y) &&\text{(by Lemma \ref{le.1}(1))}\\
&=(0\to x)\vee(0\to y) &&\text{(by (QW*12))}\\
&=(0\to (0\to x))\vee(0\to y) &&\text{(by (3))}\\
&=0\to ((0\to x)\vee y) &&\text{(by (QW*12))}\\
&=(0\to x)\vee y &&\text{(by Lemma \ref{le.1}(1))}.
\end{align*}
The rest can be obtained similarly.
\end{proof}

\begin{prop}\label{pr.2}
Let $\emph{\textbf{W}}$ be a quasi-Wajsberg* algebra. Then for any $x,y \in W$, we have

\emph{(1)} $0\to (\neg x)^+=0\to \neg x^-$ and $0\to (\neg x)^-=0\to \neg x^+$,

\emph{(2)} $0\to x^{+-}= 0 =0\to x^{-+}$,

\emph{(3)} $0\to x^{++}=0\to x^+$ and $0\to x^{--}=0\to x^-$,

\emph{(4)} $(\neg x)^+ \to y=\neg x^- \to y$, $x\to (\neg y)^+ = x\to (\neg y^-)$, $(\neg x)^- \to y=\neg x^+ \to y$ and $x\to (\neg y)^- = x\to (\neg y^+)$,

\emph{(5)} $x^{+-}\to y=0\to y=x^{-+}\to y$ and $x\to y^{+-}= x\to 0=x\to y^{-+}$,

\emph{(6)} $x^{++}\to y = x^+ \to y$, $x\to y^{++}=x\to y^+$, $x^{--}\to y = x^- \to y$ and $x\to y^{--}=x\to y^-$.
\end{prop}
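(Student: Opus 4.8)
The plan is to prove the three ``left-hand'' identities (1), (2), (3)---those of the form $0\to(\cdots)$---by direct equational manipulation, and then to deduce the six identities of (4), (5), (6) from them purely formally, by inserting or deleting a factor $0\to$ inside an argument of $\to$. This last move is exactly what Proposition \ref{pr.3}(3) licenses, since it gives $w\to y=(0\to w)\to y$ and $x\to w=x\to(0\to w)$.

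For (1) I would reduce both sides to a common normal form. Using Remark \ref{r1.1} with (QW*1), (QW*7), (QW*8) and Proposition \ref{l0.2}(1),(2), one gets $0\to(\neg x)^+=(\neg x\to 1)\to 1=(\neg 1\to x)\to 1=\neg 1\to(x\to\neg 1)$, while on the other side $0\to\neg x^-=\neg(0\to x^-)=\neg((x\to\neg1)\to\neg1)=\neg1\to(x\to\neg1)$; the two coincide, and the second identity is symmetric (swapping $1$ and $\neg1$). For (2) the key tool is (QW*3), i.e.\ $(1\to z)\to 1=1$. I would first record $0\to\neg x^+=\neg(0\to x^+)=\neg((x\to1)\to1)=1\to(x\to1)$, whence $\neg1\to x^+=\neg x^+\to 1=(0\to\neg x^+)\to 1=(1\to(x\to1))\to1=1$ by Proposition \ref{pr.3}(3) and (QW*3); applying (QW*7) gives $x^+\to\neg1=\neg1$, so $0\to x^{+-}=(x^+\to\neg1)\to\neg1=\neg1\to\neg1=0$ by Proposition \ref{l0.2}(3). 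The dual computation $x^-\to1=1$ (using $0\to x^-=(x\to\neg1)\to\neg1=1\to(\neg1\to x)$ and (QW*3)) yields $0\to x^{-+}=(x^-\to1)\to1=1\to1=0$.

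Part (3) is the main obstacle. For the first identity I would obtain $0\to x^{++}=0\to x^+$ from the axiom (QW*9) via the non-obvious substitution $x:=\neg1$. This needs the helper identity $\neg1\to(1\to x)=0\to x^+$, which follows from (QW*1), (QW*8), (QW*7) and Remark \ref{r1.1} via $\neg1\to(1\to x)=\neg(1\to x)\to1=(x\to1)\to1=0\to x^+$. Since $(\neg1)^+=0$ by Proposition \ref{pr.1}(5) and $\neg\neg1=1$, instantiating (QW*9) collapses its right-hand side to $0\to x^+$ and its left-hand side to $(0\to x^+)^+$, giving the fixed-point identity $(0\to x^+)^+=0\to x^+$; then $0\to x^{++}=(0\to x^+)^+=0\to x^+$ by Remark \ref{r1.1}. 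Since no $^-$-version of (QW*9) is available, the second identity $0\to x^{--}=0\to x^-$ I would instead derive by negation-duality: writing $a=0\to x^-$, part (1) and Proposition \ref{l0.2}(2) give $\neg a=0\to(\neg x)^+$, to which the fixed-point identity applies to yield $(\neg a)^+=\neg a$; feeding this back through part (1) and Proposition \ref{l0.2}(2) gives $a=0\to a^-$, and hence $0\to x^{--}=(0\to x^-)^-=0\to x^-$.

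Finally, (4), (5), (6) are formal consequences. By Proposition \ref{pr.3}(3), $(\neg x)^+\to y=(0\to(\neg x)^+)\to y$ and $x\to(\neg y)^+=x\to(0\to(\neg y)^+)$, so substituting the equalities of part (1) and then deleting the inserted $0\to$ yields (4); substituting part (2) gives $x^{+-}\to y=(0\to x^{+-})\to y=0\to y$ and $x\to y^{+-}=x\to 0$, which is (5); and substituting part (3) gives $x^{++}\to y=(0\to x^{++})\to y=(0\to x^+)\to y=x^+\to y$, with the other three cases handled identically, giving (6). The only genuinely inventive step is the substitution $x:=\neg1$ in (QW*9) used for (3); everything else is bookkeeping with the involution $\neg$ and the constant $0=1\to1$.
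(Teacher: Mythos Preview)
Your proposal is correct. Parts (1), (2), (4), (5), (6) follow essentially the same route as the paper: reduce $0\to(\cdot)^{\pm}$ to iterated arrows via Remark~\ref{r1.1}, simplify with (QW*1), (QW*7), (QW*8) and (QW*3), and then transport (1)--(3) into (4)--(6) using Proposition~\ref{pr.3}(3). The equational chains differ only cosmetically.

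The genuine divergence is in part (3). The paper proves $0\to x^{++}=0\to x^{+}$ by expanding to $(((x\to1)\to1)\to1)\to1$ and then invoking the restricted exchange axiom (QW*2), together with $\neg1\to1=1$, to collapse the fourfold arrow chain back to $(x\to1)\to1$. Your route avoids (QW*2) entirely: you instantiate (QW*9) at $x:=\neg1$, which (using $(\neg1)^{+}=0$ from Proposition~\ref{pr.1}(5) and your helper identity $\neg1\to(1\to x)=0\to x^{+}$) yields the fixed-point equation $(0\to x^{+})^{+}=0\to x^{+}$ directly. This is neat and arguably more conceptual, since it exhibits the idempotence of $^{+}$ on regular elements as an instance of the ``positive part'' axiom (QW*9) rather than as a consequence of weak associativity. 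For $0\to x^{--}=0\to x^{-}$ the paper again runs a direct arrow computation, while you deduce it by $\neg$-duality from the $^{+}$-case via part~(1); your derivation is slightly delicate (one must use $(0\to a)^{-}=0\to a^{-}$ to pass from $a=0\to a^{-}$ to $a=a^{-}$), but it goes through. The trade-off: the paper's argument is more self-contained and uses only the ``structural'' axioms (QW*2), (QW*3); yours is shorter and shows that (QW*9) already encodes the idempotence of $^{+}$, at the cost of a less obvious substitution.
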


\begin{proof}
(1) We have
\begin{align*}
0\to (\neg x)^+ &=(\neg x\to 1)\to 1 &&\text{(by Remark \ref{r1.1})}\\
&=(\neg x\to \neg\neg 1)\to \neg\neg 1 &&\text{(by (QW*8))}\\
&=\neg((x\to \neg 1)\to \neg 1) &&\text{(by Proposition \ref{l0.2}(2))}\\
&=\neg(0\to x^-) &&\text{(by Remark \ref{r1.1})}\\
&=\neg 0\to \neg x^- &&\text{(by Proposition \ref{l0.2}(2))}\\
&=0\to \neg x^- &&\text{(by Remark \ref{r1.1})}.
\end{align*}
Moreover,
\begin{align*}
0\to (\neg x)^- &=(\neg x\to \neg 1)\to \neg 1 &&\text{(by Remark \ref{r1.1})}\\
&=\neg ((x\to 1)\to 1) &&\text{(by Proposition \ref{l0.2}(2))}\\
&=\neg(0\to x^+) &&\text{(by Remark \ref{r1.1})}\\
&=\neg0\to \neg x^+ &&\text{(by Proposition \ref{l0.2}(2))}\\
&=0\to \neg x^+ &&\text{(by Remark \ref{r1.1})}.
\end{align*}

(2) We have
\begin{align*}
0\to x^{+-}&=(0\to x^+)^- &&\text{(by Remark \ref{r1.1})}\\
&=((x\to 1)\to 1)^- &&\text{(by Remark \ref{r1.1})}\\
&=(0\to((x\to 1)\to 1))^- &&\text{(by Lemma \ref{le.1}(1))}\\
&=(((x\to 1)\to 1)\to \neg 1)\to \neg 1 &&\text{(by Remark \ref{r1.1})}\\
&=((\neg 1\to \neg(x\to 1))\to \neg 1)\to \neg 1 &&\text{(by (QW*1))}\\
&=\neg((1\to (x\to 1))\to 1)\to \neg 1 &&\text{(by Proposition \ref{l0.2}(2))}\\
&=\neg 1\to \neg 1 &&\text{(by (QW*3))}\\
&=0 &&\text{(by Remark \ref{r1.1})}.
\end{align*}
Moreover,
\begin{align*}
0\to x^{-+}&=(0\to x^-)^+ &&\text{(by Remark \ref{r1.1})}\\
&=((x\to \neg 1)\to \neg 1)^+ &&\text{(by Remark \ref{r1.1})}\\
&=(0\to((x\to \neg 1)\to \neg 1))^+ &&\text{(by Lemma \ref{le.1}(1))}\\
&=(((x\to \neg 1)\to \neg 1)\to 1)\to 1 &&\text{(by Remark \ref{r1.1})}\\
&=((1\to \neg (x\to \neg 1))\to 1)\to 1 &&\text{(by Proposition \ref{l0.2}(1))}\\
&=1\to 1 &&\text{(by (QW*3))}\\
&=0 &&\text{(by Remark \ref{r1.1})}.
\end{align*}

(3) We have
\begin{align*}
0\to x^{++}&=(0\to x^+)^+ &&\text{(by Remark \ref{r1.1})}\\
&=((x\to 1)\to 1)^+ &&\text{(by Remark \ref{r1.1})}\\
&=(0\to ((x\to 1)\to 1))^+ &&\text{(by Lemma \ref{le.1}(1))}\\
&=(((x\to 1)\to 1)\to 1)\to 1 &&\text{(by Remark \ref{r1.1})}\\
&=\neg(1\to (((x\to 1)\to 1)\to 1)) &&\text{(by (QW*7))}\\
&=\neg((\neg 1\to 1)\to (((x\to 1)\to 1)\to 1)) &&\text{(by Proposition \ref{pr.1}(3))}\\
&=\neg(((x\to 1)\to 1)\to ((\neg 1\to 1)\to 1)) &&\text{(by (QW*2))}\\
&=\neg (((x\to 1)\to 1)\to (1\to 1)) &&\text{(by Proposition \ref{pr.1}(3))}\\
&= \neg (((x\to 1)\to 1)\to 0) &&\text{(by Remark \ref{r1.1})}\\
&=\neg\neg((x\to 1)\to 1) &&\text{(by Lemma \ref{le.1}(2))}\\
&=(x\to 1)\to 1 &&\text{(by (QW*8))}\\
&=0\to x^+ &&\text{(by Remark \ref{r1.1})}.
\end{align*}
Similarly, we have
\begin{align*}
0\to x^{--}&=(0\to x^-)^- &&\text{(by Remark \ref{r1.1})}\\
&=((x\to \neg 1)\to \neg 1)^- &&\text{(by Remark \ref{r1.1})}\\
&=(0\to ((x\to \neg 1)\to \neg 1))^- &&\text{(by Lemma \ref{le.1}(1))}\\
&=(((x\to \neg 1)\to \neg 1)\to \neg 1)\to \neg 1 &&\text{(by Remark \ref{r1.1})}\\
&=(((\neg\neg x\to \neg 1)\to \neg 1)\to \neg 1)\to \neg 1 &&\text{(by (QW*8))}\\
&=\neg((((\neg x\to 1)\to 1)\to 1)\to 1) &&\text{(by Proposition \ref{l0.2}(2))}\\
&=\neg(0\to (\neg x)^+) &&\text{(by Remark \ref{r1.1})}\\
&=\neg(0\to \neg x^-) &&\text{(by (1))}\\
&=\neg 0\to \neg\neg x^- &&\text{(by Proposition \ref{l0.2}(2))}\\
&=0\to x^- &&\text{(by Remark \ref{r1.1} and (QW*8))}.
\end{align*}

(4) From Proposition \ref{pr.3}(3) and (1), we have $(\neg x)^+ \to y=(0\to (\neg x)^+)\to y=(0\to \neg x^-)\to y = \neg x^- \to y$.
The rest can be proved similarly.

(5) From Proposition \ref{pr.3}(3) and (2), we have $x^{+-} \to y = (0\to x^{+-})\to y = 0\to y$ and $x^{-+} \to y = (0\to x^{-+})\to y = 0\to y$. Similarly, we can get that $x\to y^{+-}= x\to 0=x\to y^{-+}$.

(6) From Proposition \ref{pr.3}(3) and (3), we have $x^{++}\to y = (0\to x^{++})\to y= (0\to x^+)\to y = x^+\to y$. The rest can be proved similarly.
\end{proof}

\begin{prop}\label{pr.4}
Let $\emph{\textbf{W}}$ be a quasi-Wajsberg* algebra. Then for any $x,y,z\in W$, we have

\emph{(1)} $x\vee0=0\to x^+$ and $x\wedge 0=0\to x^-$,

\emph{(2)} $x^+\vee 0=0\to x^{+}$, $x^-\wedge 0=0\to x^{-}$, $x^-\vee 0=0$ and $x^+\wedge 0=0$.

\emph{(3)} $x\vee y=\neg(x^+\vee y^+)\to (x^-\vee y^-)$ and $x\wedge y=\neg(x^+\wedge y^+)\to (x^-\wedge y^-)$,

\emph{(4)} $x^+\vee x^-=0\to x^+$ and $x^+\wedge x^-=0\to x^-$,

\emph{(5)} $0 \to x =\neg x^+\to x^-$,

\emph{(6)} $(x\vee y)^+=x^+\vee y^+$, $(x\vee y)^-=x^-\vee y^-$, $(x\wedge y)^+=x^+\wedge y^+$ and $(x\wedge y)^-=x^-\wedge y^-$,

\emph{(7)} $x\vee(x\wedge y)=x\vee x$ and $x\wedge(x\vee y)=x\wedge x$.
\end{prop}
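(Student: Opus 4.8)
The plan is to establish everything by direct equational reasoning from the axioms (QW*1)--(QW*12) together with Remarks \ref{r1.1} and \ref{n0.1} and Propositions \ref{l0.2}, \ref{pr.1}, \ref{pr.3}, \ref{pr.2} and Lemma \ref{le.1}. Throughout I would exploit two recurring phenomena: the \emph{collapse rules} of Proposition \ref{pr.2} (which make $0\to x^{+-}=0=0\to x^{-+}$ and stabilise $x^{++},x^{--}$), and the \emph{$0\to$ absorption} of Proposition \ref{pr.3}(3),(4) (which lets one insert or delete a leading $0\to$ inside any $\to$, $\vee$ or $\wedge$). Every statement involving $\wedge$ I would deduce from its $\vee$-counterpart via the De Morgan definition $x\wedge y=\neg(\neg x\vee\neg y)$ of Remark \ref{n0.1}, using Proposition \ref{pr.2}(1) to trade $(\neg x)^{\pm}$ for $\neg x^{\mp}$. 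Rather than follow the listed order, I would prove (5) and (6) first, since they unlock the cleanest route to (3).

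For (5), I would apply (QW*6) to the pair $(0,x)$: since $0^+=0=0^-$ by Proposition \ref{pr.1}(4), this gives $0\to x=(x^+\to 0)\to(0\to x^-)$; rewriting $x^+\to 0=0\to\neg x^+$ by (QW*1) and Remark \ref{r1.1}, and then deleting both leading $0\to$'s by Proposition \ref{pr.3}(3), yields $0\to x=\neg x^+\to x^-$. For (6), I would use that $0\to z^+=(z\to 1)\to 1$ and $0\to z^-=(z\to\neg1)\to\neg1$ (Remark \ref{r1.1}) together with the distribution laws of Proposition \ref{pr.3}(2): computing $0\to(x\vee y)^+=((x\vee y)\to 1)\to 1=((x\to1)\wedge(y\to1))\to1=((x\to1)\to1)\vee((y\to1)\to1)=(0\to x^+)\vee(0\to y^+)=x^+\vee y^+$, and since $(x\vee y)^+=0\to(x\vee y)^+$ by Proposition \ref{pr.3}(4) and Remark \ref{r1.1}, this gives $(x\vee y)^+=x^+\vee y^+$; the other three identities follow by swapping $1\leftrightarrow\neg1$ and $\vee\leftrightarrow\wedge$. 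With (5) and (6) in hand, (3) becomes a one line computation, $x\vee y=0\to(x\vee y)=\neg(x\vee y)^+\to(x\vee y)^-=\neg(x^+\vee y^+)\to(x^-\vee y^-)$, and the identical chain with $\wedge$ in place of $\vee$ settles the meet case.

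For (1) I would expand the definition of $\vee$ at $y=0$ and simplify each block with the collapse rules: $(x^+\to 0)^+=0$ and $(0\to x^-)^-=0\to x^-$, so that $(0\to x^-)\to x^-=x^-\to x^-=0$ and the whole expression reduces to $(0\to\neg x^+)\to 0$, which equals $0\to x^+$ by (QW*1), Proposition \ref{l0.2}(2) and Proposition \ref{pr.3}(3). Part (2) is then immediate from (1) by substituting $x^+,x^-$ for $x$ and invoking $0\to x^{++}=0\to x^+$ and $0\to x^{+-}=0$ from Proposition \ref{pr.2}; and (4) follows by the same expansion applied to $x^+\vee x^-$, where the collapse rules kill the second factor and leave exactly the block $(0\to\neg x^+)\to 0=0\to x^+$ already met in (1).

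The main obstacle is (7), the absorption law. Using (3) and (6) I would first rewrite $x\vee(x\wedge y)=\neg\bigl(x^+\vee(x^+\wedge y^+)\bigr)\to\bigl(x^-\vee(x^-\wedge y^-)\bigr)$ and compare with $x\vee x=0\to x=\neg x^+\to x^-$ from Proposition \ref{pr.3}(1) and part (5); this reduces the claim to the two single-sign absorptions $x^+\vee(x^+\wedge y^+)=0\to x^+$ and $x^-\vee(x^-\wedge y^-)=0\to x^-$, the second following from the first by De Morgan duality. The difficulty is that these are genuine absorption identities restricted to the positive (respectively negative) cone, and the naive reduction loops back on itself, because the $^+$ of a positive element is again positive. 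I expect the real work to be in closing this base case: on the positive cone the negative channel is trivial ($0\to u^-=0$), so the definitions collapse to $u\wedge v=u^+\wedge v^+$ and $u\vee v=\neg((u\to v)^+\to\neg u^+)$, after which a careful but purely mechanical expansion, repeatedly discarding $^{+-}$ and $^{-+}$ terms via Proposition \ref{pr.2}(2), should telescope $x^+\vee(x^+\wedge y^+)$ down to $0\to x^+$. Managing this collapse without circularity is the crux of the proof.
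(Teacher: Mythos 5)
Your proposal is essentially correct and draws on the same toolkit as the paper (expansion of the defining term for $\vee$, the collapse rules of Proposition \ref{pr.2}, and the $0\to$ absorption of Proposition \ref{pr.3}(3),(4)), but it reorganizes the argument in three places. For (3) the paper expands $x^+\vee y^+$ and $x^-\vee y^-$ directly and observes that they are literally the two factors of the defining term for $x\vee y$; your derivation $x\vee y=0\to(x\vee y)=\neg(x\vee y)^+\to(x\vee y)^-=\neg(x^+\vee y^+)\to(x^-\vee y^-)$ from (5) and (6) is shorter and legitimate, because the paper's own proof of (6) (which is exactly the computation you describe) does not use (3). For (4) the paper applies (3) to $x^+,x^-$; your direct expansion also closes, by the same collapse rules. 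Your proofs of (1), (2), (5) and (6) coincide with the paper's. One small caution on (1): your intermediate claim $(x^+\to 0)^+=0$ holds only up to a leading $0\to$, i.e.\ $0\to(x^+\to 0)^+=0$; this is harmless because the term sits in the antecedent of a $\to$ and Proposition \ref{pr.3}(3) lets you insert the missing $0\to$, but it should be said.

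The one place where your argument is not actually carried out is (7). The paper never passes through the single-sign absorptions $x^+\vee(x^+\wedge y^+)=0\to x^+$: it expands $x\vee(x\wedge y)$ once and kills each block directly. Your reduction via (3) and (6) to those single-sign identities is valid, and the step you leave as ``should telescope'' does in fact go through with no circularity: after one expansion of the defining term, the antecedent contains $x^+\to(x^+\wedge y^+)=(x^+\to x^+)\wedge(x^+\to y^+)=0\wedge(x^+\to y^+)$, which part (1) turns into $0\to(x^+\to y^+)^-$, whose $^+$ is $0\to(x^+\to y^+)^{-+}=0$ by Proposition \ref{pr.2}(2); the other block dies analogously, leaving $(0\to\neg x^+)\to 0=0\to x^+$. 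So the mechanism you name (discarding ${}^{+-}$ and ${}^{-+}$ terms) is exactly the right one, and the crucial trigger is $x^+\to x^+=0$ feeding into part (1); but as written you have flagged this computation as an expected obstacle rather than performed it, so (7) is the one part of the proposal that still needs to be written down.
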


\begin{proof}
(1) Since $x\vee 0 =((x^+\to 0^+)^+\to (\neg x)^-)\to ((0^-\to x^-)^-\to x^-)$, we calculate $(x^+\to 0^+)^+\to (\neg x)^-$ and $(0^-\to x^-)^-\to x^-$ respectively. Because
\begin{align*}
(x^+\to 0^+)^+\to (\neg x)^- &= (x^+\to 0)^+\to (\neg x)^- &&\text{(by Proposition \ref{pr.1}(4))}\\
&=(\neg(0 \to x^+))^+\to (\neg x)^- &&\text{(by (QW*7))}\\
&=\neg(0\to x^+)^- \to \neg x^+ &&\text{(by Proposition \ref{pr.2}(4))}\\
&=\neg(0\to x^{+-})\to \neg x^+ &&\text{(by Remark \ref{r1.1})}\\
&= \neg0 \to \neg x^+ && \text{(by Proposition \ref{pr.2}(2))}\\
&= 0\to \neg x^+ && \text{(by Remark \ref{r1.1})}
\end{align*}
and
\begin{align*}
(0^-\to x^-)^-\to x^- &=(0\to x^-)^-\to x^- &&\text{(by Proposition \ref{pr.1}(4))}\\
&= (0\to x^{--})\to x^- && \text{(by Remark \ref{r1.1})}\\
&=(0\to x^-)\to x^- && \text{(by Proposition \ref{pr.2}(3))}\\
&=x^-\to x^- &&\text{(by Proposition \ref{pr.3}(3))}\\
&= 0 &&\text{(by Remark \ref{r1.1})},
\end{align*}
then
\begin{align*}
x\vee 0 &=(0\to \neg x^+)\to 0  \\
&=\neg (0\to \neg x^+) &&\text{(by Lemma \ref{le.1}(2))}\\
&= \neg0 \to \neg \neg x^+ && \text{(by Proposition \ref{l0.2}(2))} \\
&=0\to x^+  &&\text{(by Remark \ref{r1.1} and (QW*8))}.
\end{align*}
Based on this result, we have
\begin{align*}
x\wedge 0&=\neg(\neg x\vee \neg0) &&\text{(by Remark \ref{n0.1})}\\
&=\neg(\neg x\vee 0) &&\text{(by Remark \ref{r1.1})}\\
&=\neg(0\to (\neg x)^+) &&\text{(by (1))}\\
&=\neg(0\to\neg x^-) &&\text{(by Proposition \ref{pr.2}(1))}\\
&=\neg0\to \neg\neg x^- && \text{(by Proposition \ref{l0.2}(2))}\\
&=0\to x^- &&\text{(by Remark \ref{r1.1} and (QW*8))}.
\end{align*}

(2) From (1) and Proposition \ref{pr.2}(3), we have $x^+ \vee 0=0\to x^{++}=0\to x^+$ and $x^- \wedge 0 = 0\to x^{--} = 0\to x^-$. Moreover, from (1) and Proposition \ref{pr.2}(2), we have $x^- \vee 0=0\to x^{-+}=0$ and $x^+ \wedge 0=0\to x^{+-} = 0$.

(3) We have
\begin{align*}
x^+\vee y^+&=((x^{++}\to y^{++})^+\to (\neg x^+)^-)\to((y^{+-}\to x^{+-})^-\to x^{+-})\\
&=((x^{+}\to y^{+})^+\to ((\neg x)^{--}))\to 0 &&\text{(by Proposition \ref{pr.2}(4), (5), (6))}\\
&=((x^{+}\to y^{+})^+\to (\neg x)^{-})\to 0 &&\text{(by Proposition \ref{pr.2}(6))}\\
&=\neg((x^{+}\to y^{+})^+\to (\neg x)^{-}) &&\text{(by Lemma \ref{le.1}(2))}
\end{align*}
and
\begin{align*}
x^-\vee y^-&=((x^{-+}\to y^{-+})^+\to (\neg x^-)^-)\to((y^{--}\to x^{--})^-\to x^{--})\\
&=(0^+\to (\neg x)^{+-})\to((y^{-}\to x^{-})^-\to x^{-}) &&\text{(by Proposition \ref{pr.2}(4), (5), (6))}\\
&=(0\to 0)\to((y^{-}\to x^{-})^-\to x^{-}) &&\text{(by Proposition \ref{pr.2}(5))}\\
&=0\to((y^{-}\to x^{-})^-\to x^{-}) &&\text{(by Remark \ref{r1.1})}\\
&=(y^{-}\to x^{-})^-\to x^{-} &&\text{(by Lemma \ref{le.1}(1))}.
\end{align*}
It is clearly that $x\vee y =\neg(x^+\vee y^+)\to (x^-\vee y^-)$.
From the former result, we have
\begin{align*}
x\wedge y&=\neg(\neg x\vee \neg y)  &&\text{(by Remark \ref{n0.1})}\\
&=\neg(\neg((\neg x)^+\vee (\neg y)^+)\to ((\neg x)^-\vee (\neg y)^-)) &&\text{(by (3))} \\
&=\neg(\neg(\neg x^-\vee \neg y^-)\to (\neg x^+\vee \neg y^+)) &&\text{(by Proposition \ref{pr.2}(4))} \\
&=(\neg x^-\vee \neg y^-)\to \neg(\neg x^+\vee \neg y^+)  &&\text{(by Proposition \ref{l0.2}(2))} \\
&=\neg(x^-\wedge y^-)\to (x^+\wedge y^+) &&\text{(by Remark \ref{n0.1})} \\
&=\neg(x^+\wedge y^+)\to (x^-\wedge y^-) &&\text{(by Proposition \ref{l0.2}(1))}.
\end{align*}

(4) We have
\begin{align*}
x^+\vee x^- &=\neg(x^{++}\vee x^{-+})\to(x^{+-}\vee x^{--}) &&\text{(by (3))}\\
&=\neg(x^+\vee 0)\to (0 \vee x^-) &&\text{(by Proposition \ref{pr.2}(5), (6))} \\
&=\neg(0\to x^+)\to 0 &&\text{(by (2))} \\
&=0\to x^+ &&\text{(by Lemma \ref{le.1}(2))},
\end{align*}
and
\begin{align*}
x^+\wedge x^- &=\neg(x^{++}\wedge x^{-+})\to(x^{+-}\wedge x^{--}) &&\text{(by (3))}\\
&=\neg(x^+\wedge 0)\to (0\wedge x^-) &&\text{(by Proposition \ref{pr.2}(5), (6))}\\
&=\neg0\to (0\to x^-) &&\text{(by (2))}\\
&=0\to (0\to x^-) &&\text{(by Remark \ref{r1.1})}\\
&=0\to x^- &&\text{(by Lemma \ref{le.1}(1))}.
\end{align*}

(5) We have
\begin{align*}
0\to x&=(x^+\to 0^-)\to(0^+\to x^-) &&\text{(by (QW*6))}\\
&=(x^+\to 0)\to(0\to x^-) &&\text{(by Proposition \ref{pr.1}(4))}\\
&=\neg (0\to x^+)\to (0\to x^-) &&\text{(by (QW*7))}\\
&=(\neg 0 \to \neg x^+) \to (0\to x^-) &&\text{(by Proposition \ref{l0.2}(2))}\\
&= (0\to \neg x^+)\to (0\to x^-) &&\text{(by Remark \ref{r1.1})}\\
&=\neg x^+\to x^- &&\text{(by Proposition \ref{pr.3}(3))}.
\end{align*}

(6) We have
\begin{align*}
(x\vee y)^+ &=(0\to (x\vee y))^+ &&\text{(by Proposition \ref{pr.3}(4))}\\
&=((x\vee y)\to 1)\to 1 &&\text{(by Remark \ref{r1.1})}\\
&=((x\to 1)\wedge (y\to 1))\to 1 &&\text{(by Proposition \ref{pr.3}(2))}\\
&=((x\to 1)\to 1)\vee((y\to 1)\to 1) &&\text{(by Proposition \ref{pr.3}(2))}\\
&=(0\to x^+)\vee(0\to y^+) &&\text{(by Remark \ref{r1.1})}\\
&=x^+\vee y^+ &&\text{(by Proposition \ref{pr.3}(4))}.
\end{align*}
The others can be proved similarly.

(7) Since $x\vee(x\wedge y)=((x^+\to(x\wedge y)^+)^+\to(\neg x)^-)\to(((x\wedge y)^-\to x^-)^-\to x^-)$, we calculate $(x^+\to(x\wedge y)^+)^+\to(\neg x)^-$ and $((x\wedge y)^-\to x^-)^-\to x^-$ respectively. From (6), Proposition \ref{pr.3}(2), Remark \ref{r1.1}, Proposition \ref{pr.2}(2) and Proposition \ref{pr.2}(4), we have
\begin{align*}
(x^+\to(x\wedge y)^+)^+\to(\neg x)^- &= (x^+\to(x^+\wedge y^+))^+\to(\neg x)^- && \text{(by (6))}\\
&=((x^+\to x^+)\wedge(x^+\to y^+))^+\to (\neg x)^- &&\text{(by Proposition \ref{pr.3}(2))}\\
&=(0\wedge(x^+\to y^+))^+\to (\neg x)^- &&\text{(by Remark \ref{r1.1})}\\
&=(0\to(x^+\to y^+)^-)^+\to (\neg x)^- &&\text{(by (1))}\\
&=(0\to(x^+\to y^+)^{-+})\to (\neg x)^- &&\text{(by Remark \ref{r1.1})}\\
&=0\to \neg x^+ &&\text{(by Proposition \ref{pr.2}(2), (4))}
\end{align*}
and
\begin{align*}
((x\wedge y)^-\to x^-)^-\to x^- &=((x^-\wedge y^-)\to x^-)^-\to x^- && \text{(by (6))}\\
&=((x^-\to x^-)\vee(y^-\to x^-))^-\to x^- &&\text{(by Proposition \ref{pr.3}(2))}\\
&=(0\vee(y^-\to x^-))^-\to x^- &&\text{(by Remark \ref{r1.1})}\\
&= (0\to(y^-\to x^-)^+)^-\to x^- &&\text{(by (1))}\\
&=(0\to(y^-\to x^-)^{+-})\to x^- &&\text{(by Remark \ref{r1.1})}\\
&=0\to x^- &&\text{(by Proposition \ref{pr.2}(2))}.
\end{align*}
It turns out that
\begin{align*}
x\vee(x\wedge y) &=(0\to \neg x^+)\to (0\to x^-) \\
&= \neg x^+\to x^- &&\text{(by Proposition \ref{pr.3}(3))}\\
&= 0\to x &&\text{(by (5))}\\
&= x\vee x &&\text{(by Proposition \ref{pr.3}(1))}.
\end{align*}
Moreover, from the former result, we have
\begin{align*}
x\wedge (x\vee y)&=\neg(\neg x\vee \neg(x\vee y)) &&\text{(by Remark \ref{n0.1})}\\
&=\neg(\neg x\vee (\neg x\wedge \neg y)) &&\text{(by Remark \ref{n0.1}) and (QW*8)}\\
&=\neg(0\to \neg x) &&\text{(by (7))}\\
&=\neg 0\to \neg\neg x &&\text{(by Proposition \ref{l0.2}(2))}\\
&=0\to x &&\text{(by Remark \ref{r1.1} and (QW*8))}\\
&=x\wedge x &&\text{(by Proposition \ref{pr.3}(1))}.
\end{align*}
\end{proof}

\begin{coro}
Let $\mathbf{W}$ be a quasi-Wajsberg* algebra. Then $\langle W;\vee,\wedge \rangle$ is a quasi-lattice.
\end{coro}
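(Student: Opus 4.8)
The plan is to verify directly that $\langle W;\vee,\wedge\rangle$ satisfies each clause in the definition of a quasi-lattice, namely that $\vee$ and $\wedge$ are both commutative and associative, that the two quasi-absorption laws $x\vee(x\wedge y)=x\vee x$ and $x\wedge(x\vee y)=x\wedge x$ hold, and that the idempotent terms coincide, $x\vee x=x\wedge x$. The strategy is purely one of assembly: every required identity has already been isolated in an earlier statement, so no fresh computation should be needed at this point.

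First I would dispatch the lattice-type structural laws. Commutativity and associativity of $\vee$ are exactly (QW*10) and (QW*11). For $\wedge$, recall that it was defined by $x\wedge y=\neg(\neg x\vee\neg y)$, and Remark \ref{n0.1} already records that this $\wedge$ inherits commutativity and associativity from (QW*10) and (QW*11) together with the involution (QW*8). Thus all four of these axioms are matched immediately.

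Next I would supply the two weak absorption identities and the coincidence of the idempotents. The quasi-absorption laws $x\vee(x\wedge y)=x\vee x$ and $x\wedge(x\vee y)=x\wedge x$ are precisely the content of Proposition \ref{pr.4}(7), and the equality $x\vee x=x\wedge x$ follows from Proposition \ref{pr.3}(1), since there both expressions were shown to equal $0\to x$. Collecting these facts, every defining identity of a quasi-lattice is met by one of the results established above.

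I do not expect any genuine obstacle here, precisely because the substantive work was carried out earlier: the laborious part was establishing the quasi-absorption laws of Proposition \ref{pr.4}(7) and the idempotency coincidence of Proposition \ref{pr.3}(1), both of which rested in turn on the calculus of $^{+}$ and $^{-}$ developed in Propositions \ref{pr.1}--\ref{pr.2}. Given those, the corollary is an essentially verbatim transcription of prior identities into the format of the quasi-lattice axioms, and the only point requiring a moment's care is to read off the correct citation for each clause.
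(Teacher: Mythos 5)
Your verification is sound as far as it goes, but it checks a different (and incomplete) list of quasi-lattice axioms than the one the paper is working with. In the quasi-MV literature that this paper follows, a quasi-lattice must also satisfy the weak idempotency laws $x\vee(y\vee y)=x\vee y$ and $x\wedge(y\wedge y)=x\wedge y$, and these are in fact the \emph{only} identities the paper's own proof writes out explicitly --- everything you list (commutativity and associativity of both operations via (QW*10), (QW*11) and Remark \ref{n0.1}, the quasi-absorption laws via Proposition \ref{pr.4}(7), and $x\vee x=x\wedge x$ via Proposition \ref{pr.3}(1)) is dismissed there in one sentence as following from Definition \ref{dqw} and the earlier propositions. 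So your proposal omits precisely the clause the authors considered worth proving.

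The gap is easy to close with the same tools you already cite: by Proposition \ref{pr.3}(1) one has $y\vee y=0\to y$ and $y\wedge y=0\to y$, and then Proposition \ref{pr.3}(4) gives $x\vee(y\vee y)=x\vee(0\to y)=x\vee y$ and $x\wedge(y\wedge y)=x\wedge(0\to y)=x\wedge y$. With that addition your argument covers all the required identities and is essentially the paper's proof with the emphasis inverted; without it, the verification is incomplete against the definition of quasi-lattice actually in use.
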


\begin{proof}
We only show that $x\vee (y\vee y)=x\vee y$ and $x\wedge (y\wedge y)=x\wedge y$ for any $x,y\in W$. The rest can be obtained from Definition \ref{dqw} and Proposition \ref{pr.3}.
From Proposition \ref{pr.3}(1) and (4), we have $x\vee (y\vee y)= x \vee (0\to y) = x\vee y$ and $x\wedge (y\wedge y)=x \wedge (0\to y) = x \wedge y$, then $x\vee (y\vee y)=x\vee y$ and $x\wedge (y\wedge y)=x\wedge y$ for any $x,y\in W$.
\end{proof}

Let $\textbf{W}$ be a quasi-Wajsberg* algebra. For any $x,y\in W$, we define a relation $x\le y$ iff $x\vee y=0\to y$. If $x\le y$, we also say $y\ge x$. For any $x\in W$, if $0\le x$, then we call the element $x$ \emph{non-negative}, and if $x\le 0$, then we call the element $x$ \emph{non-positive}.
Moreover, if $0\le x$ and $0\neq 0\to x$, then we call the element $x$ \emph{positive} and denote it by $0<x$, if $x\le 0$ and $0 \neq 0 \to x$, then we call the element $x$ \emph{negative} and denote it by $x< 0$.

In the following, we denote $W^{\scriptscriptstyle\leq 0}=\{x\in W: x\leq 0\}$ and $W^{\scriptscriptstyle\geq 0}=\{x\in W: 0\leq x\}$. For any $x\in W$, we have $x^-\le 0\le x^+$ by Proposition \ref{pr.4}(2). Denote $W^+=\{x^+:x\in W\}$ and $W^-=\{x^-:x\in W\}$. Then we have $0\le y$ for any $y\in W^+$ and $z\le 0$ for any $z\in W^-$, so $W^+\subseteq W^{\scriptscriptstyle\geq 0}$ and $W^-\subseteq W^{\scriptscriptstyle\leq 0}$.

\begin{prop}\label{p3.2}
Let $\mathbf{A}=\langle A;\oplus,{-},^{+},^{-},0,1\rangle$ be a quasi-MV* algebra. If for any $x,y\in A$, we define $x\to y=-x\oplus y$ and $\neg x=-x$, then $f(\mathbf{A})=\langle A;\to,\neg,^{+},^{-},1\rangle$ is a quasi-Wajsberg* algebra, where $x\vee_{\scriptscriptstyle f(\mathbf{A})} y = ((x^{+}\to y^{+})^{+}\to(\neg x)^{-}) \to((y^{-}\to x^{-})^{-}\to x^{-})$.
\end{prop}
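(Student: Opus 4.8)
\textbf{The plan.} The statement to prove is that, starting from a quasi-MV* algebra $\mathbf{A}$ and defining $x\to y=-x\oplus y$ and $\neg x=-x$, the derived algebra $f(\mathbf{A})=\langle A;\to,\neg,^{+},^{-},1\rangle$ satisfies the twelve axioms (QW*1)--(QW*12) of Definition \ref{dqw}. The overall strategy is purely computational: each Wajsberg*-axiom is translated into the $\oplus,{-}$ language via the two defining equations, and then verified by invoking the corresponding MV*-axioms (QMV*1)--(QMV*14) together with the auxiliary identities collected in Lemmas \ref{l0.1} and \ref{l0.3}. I would first record the two "dictionary" facts that make every subsequent translation mechanical: (i) $x\to y=-x\oplus y=y\oplus(-x)$ by (QMV*1), and (ii) $\neg\neg x=-(-x)=x$ by (QMV*10). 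I would also note that the constant $0$ of the quasi-Wajsberg* structure, namely $1\to 1=-1\oplus 1=1\oplus(-1)$, equals the quasi-MV* zero by (QMV*8), so that "$0\to x$" in $f(\mathbf{A})$ becomes $-0\oplus x=0\oplus x=x\oplus 0$ (using (QMV*7) and (QMV*1)); this pins down the pervasive $\oplus 0$ idempotents that appear throughout Lemma \ref{l0.1}.

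\textbf{Order of verification.} I would dispatch the axioms roughly in increasing order of difficulty. The involution axiom (QW*8) $\neg\neg x=x$ is immediate from (QMV*10). Axiom (QW*7) $\neg(x\to y)=y\to x$ unwinds to $-(-x\oplus y)=-y\oplus x$, which follows from (QMV*9) and (QMV*10). Axiom (QW*1) $x\to y=\neg y\to\neg x$ becomes $-x\oplus y=-(-y)\oplus(-x)=y\oplus(-x)$, again (QMV*10) plus commutativity. Axioms (QW*3), (QW*4) translate to the ($\oplus 0$)-stability facts of Lemma \ref{l0.1}(11) and to (QMV*3); (QW*2) is a direct rewrite of the associativity-type axiom (QMV*2). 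The compatibility axioms (QW*5), (QW*6), (QW*9) for $^{+},^{-}$ match (QMV*5), (QMV*6), (QMV*11) after translation, and here I must be careful that the defined $x\vee y$ in Definition \ref{dqw} coincides, under the dictionary, with the $x\vee y$ of Definition \ref{d1.1}; this is the content of the hypothesis $x\vee_{\scriptscriptstyle f(\mathbf{A})}y=((x^{+}\to y^{+})^{+}\to(\neg x)^{-})\to((y^{-}\to x^{-})^{-}\to x^{-})$, so I would expand both expressions and check they agree term-for-term, which in turn validates the lattice axioms (QW*10)--(QW*12) from (QMV*12)--(QMV*14).

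\textbf{Main obstacle.} The routine rewrites for (QW*1)--(QW*4), (QW*7), (QW*8) are genuinely short. The real labor, and the step I expect to resist a one-line argument, is checking that the two definitions of $\vee$ agree, i.e.\ that the Wajsberg*-join built from $\to$ and $\neg$ reduces exactly to the MV*-join $x\vee y=(x^{+}\oplus(-x^{+}\oplus y^{+})^{+})\oplus(x^{-}\oplus(-x^{-}\oplus y^{-})^{+})$. Expanding $((x^{+}\to y^{+})^{+}\to(\neg x)^{-})\to((y^{-}\to x^{-})^{-}\to x^{-})$ produces a nested tower of $\oplus$'s and $-$'s, and collapsing it requires repeated use of Lemma \ref{l0.1}(4)--(6) (the interaction of $-$ with $^{+},^{-}$ and the idempotence $x^{++}\oplus 0=x^{+}\oplus 0$) to identify $(\neg x)^{-}=(-x)^{-}$ with $-x^{+}\oplus 0$ and similarly for the other subterms. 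Once this identification is secured, axioms (QW*9)--(QW*12) follow by transporting (QMV*11)--(QMV*14) through the dictionary, and the proof is complete. I would present the $\vee$-reconciliation as a displayed \texttt{align*} computation and relegate the genuinely mechanical axioms to a sentence each, citing the matching quasi-MV* axiom.
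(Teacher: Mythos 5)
Your plan matches the paper's proof essentially step for step: both verify (QW*1)--(QW*12) by translating through the dictionary $x\to y=-x\oplus y$, $\neg x=-x$, both lean on Lemma \ref{l0.1}(4) and (11) to identify $(\neg x)^{-}$ with $-x^{+}\oplus 0$ and absorb the $\oplus\,0$ terms, and both locate the only real work in showing $x\vee_{\scriptscriptstyle f(\mathbf{A})}y=x\vee_{\scriptscriptstyle \mathbf{A}}y$, after which (QW*10)--(QW*12) are imported from (QMV*12)--(QMV*14). This is correct and is the same approach; the only thing to add when writing it out is that the same $(\neg x)^{\pm}$ conversions are also needed inside the verification of (QW*6), not just in the $\vee$-reconciliation.
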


\begin{proof}
For any $x,y,z\in A$, we need to check the conditions (QW*1)--(QW*12) one by one.

(QW*1) We have
\begin{align*}
x\to y&=-x\oplus y &&\text{(by Definition)}\\
&=y\oplus (-x) &&\text{(by (QMV*1))}\\
&=-(-y)\oplus (-x) &&\text{(by (QMV*10))}\\
&=\neg y\to \neg x &&\text{(by Definition)},
\end{align*}

(QW*2) From (QMV*10), (QMV*9), (QMV*1) and (QMV*2), we have
\begin{align*}
(x\to 1)\to((y\to 1)\to z)&=-(-x\oplus 1)\oplus (-(-y\oplus 1)\oplus z) &&\text{(by Definition)}\\
&=-((-x\oplus 1)\oplus ((-y\oplus 1)\oplus (-z))) &&\text{(by (QMV*9) and (QMV*10))}\\
&=-((1\oplus (-x))\oplus ((1\oplus (-y))\oplus (-z))) &&\text{(by (QMV*1))}\\
&=-((1\oplus (-x))\oplus (-z\oplus(1\oplus (-y)))) &&\text{(by (QMV*1))}\\
&=-(((1\oplus (-x))\oplus (-z))\oplus(1\oplus (-y))) &&\text{(by (QMV*2))}\\
&=-(-y\oplus 1)\oplus (-(-x\oplus 1)\oplus z) &&\text{(by (QMV*1))}\\
&=(y\to 1)\to ((x\to 1)\to z) &&\text{(by Definition)}.
\end{align*}

(QW*3) We have
\begin{align*}
(1\to x)\to 1&=-(-1\oplus x)\oplus 1 &&\text{(by Definition)}\\
&=(1\oplus (-x))\oplus 1 &&\text{(by (QMV*9))}\\
&=(-x\oplus 1)\oplus 1 &&\text{(by (QMV*1))}\\
&=1 &&\text{(by (QMV*3))}.
\end{align*}

(QW*4) We have
\begin{align*}
(z\to z)\to (x\to y)&=-(-z\oplus z)\oplus (-x\oplus y) &&\text{(by Definition)}\\
&=0\oplus (-x\oplus y) &&\text{(by (QMV*8))}\\
&=-x\oplus y &&\text{(by (QMV*4))}\\
&=x\to y &&\text{(by Definition)}.
\end{align*}

(QW*5) From (QMV*8), (QMV*7) and (QMV*5), we have $(1\to 1)\to x^{+}=-(-1\oplus 1)\oplus x^+=0\oplus x^+$ and $((1\to 1)\to x)^{+}=(-(-1\oplus 1)\oplus x)^{+}=(0\oplus x)^{+}=0\oplus x^{+}$. Moreover, we can get that $(x\to 1)\to 1=-(-x\oplus 1)\oplus 1=(x\oplus (-1))\oplus 1=1\oplus(-1\oplus x)=0\oplus x^{+}$ from (QMV*9) and (QMV*5). It turns out that $(1\to 1)\to x^{+}=((1\to 1)\to x)^{+}=(x\to 1)\to 1$.
Similarly, we have $(1\to 1)\to x^{-}=((1\to 1)\to x)^{-}=(x\to \neg 1)\to \neg 1$.

(QW*6) We have
\begin{align*}
x\to y&=-x\oplus y &&\text{(by Definition)}\\
&=((-x)^{+}\oplus y^{+})\oplus ((-x)^{-}\oplus y^{-}) &&\text{(by (QMV*6))}\\
&=(((-x)^{+}\oplus 0)\oplus y^{+})\oplus (((-x)^{-}\oplus 0)\oplus y^{-}) &&\text{(by Lemma \ref{l0.1}(11))}\\
&=((-x^{-}\oplus 0)\oplus y^{+})\oplus ((-x^{+}\oplus 0)\oplus y^{-}) &&\text{(by Lemma \ref{l0.1}(4))}\\
&=(-x^{-}\oplus y^{+})\oplus (-x^{+}\oplus y^{-}) &&\text{(by Lemma \ref{l0.1}(11))}\\
&=-(-y^{+}\oplus x^{-})\oplus (-x^{+}\oplus y^{-}) &&\text{(by (QMV*9) and (QMV*1))}\\
&=(y^{+}\to x^{-})\to (x^{+}\to y^{-}) &&\text{(by Definition)}.
\end{align*}

(QW*7) We have $\neg(x\to y)=-(-x\oplus y)=-(y\oplus (-x))=-y\oplus x= y\to x$ by (QMV*1), (QMV*9) and (QMV*10).

(QW*8) We have $\neg\neg x=-(-x)=x$ by (QMV*10).

(QW*9) From (QMV*10) and (QMV*11), we have
\begin{align*}
(x\to (\neg x\to y))^{+}&=(-x\oplus (-(-x)\oplus y))^{+} &&\text{(by Definition)}\\
&=(-x\oplus (x\oplus y))^{+} &&\text{(by (QMV*10))}\\
&=-x^{+}\oplus (x^{+}\oplus y^{+}) &&\text{(by (QMV*11))}\\
&=-x^{+}\oplus (-(-x^{+})\oplus y^{+}) &&\text{(by (QMV*10))}\\
&=x^{+}\to (\neg x^{+}\to y^{+}) &&\text{(by Definition)}.
\end{align*}

(QW*10) We have
\begin{align*}
x\vee_{\scriptscriptstyle f(\mathbf{A})} y&=((x^{+}\to y^{+})^{+}\to(\neg x)^{-}) \to((y^{-}\to x^{-})^{-}\to x^{-}) &&\text{(by Definition)}\\
&=-(-(-x^{+}\oplus y^{+})^{+}\oplus (-x)^{-})\oplus (-((-y^{-})\oplus x^{-})^{-}\oplus x^{-}) &&\text{(by Definition)}\\
&=-(-(-x^{+}\oplus y^{+})^{+}\oplus ((-x)^{-}\oplus 0))\oplus (-((-y^{-})\oplus x^{-})^{-}\oplus x^{-}) &&\text{(by Lemma \ref{l0.1}(11))}\\
&=-(-(-x^{+}\oplus y^{+})^{+}\oplus (-x^{+}\oplus 0))\oplus (-((-y^{-})\oplus x^{-})^{-}\oplus x^{-}) &&\text{(by Lemma \ref{l0.1}(4))}\\
&=-(-(-x^{+}\oplus y^{+})^{+}\oplus (-x^{+}))\oplus (-(x^{-}\oplus (-y^{-}))^{-}\oplus x^{-}) &&\text{(by Lemma \ref{l0.1}(11))}\\
&=-(-((-x^{+}\oplus y^{+})^{+}\oplus x^{+}))\oplus (-(-(-x^{-}\oplus y^{-})^{+})\oplus x^{-}) &&\text{(by (QMV*9))}\\
&=((-x^{+}\oplus y^{+})^{+}\oplus x^{+})\oplus ((-x^{-}\oplus y^{-})^{+}\oplus x^{-}) &&\text{(by (QMV*10))}\\
&=(x^{+}\oplus(-x^{+}\oplus y^{+})^{+})\oplus (x^{-}\oplus(-x^{-}\oplus y^{-})^{+}) &&\text{(by (QMV*1))}\\
&=x\vee_{\scriptscriptstyle \mathbf{A}} y,
\end{align*}
so $x\vee_{\scriptscriptstyle f(\mathbf{A})} y=x\vee_{\scriptscriptstyle \mathbf{A}} y=y\vee_{\scriptscriptstyle \mathbf{A}} x=y\vee_{\scriptscriptstyle f(\mathbf{A})} x$
by (QMV*12).

(QW*11) Based on the proof of (QW*10), we have $x\vee_{\scriptscriptstyle f(\mathbf{A})} (y\vee_{\scriptscriptstyle f(\mathbf{A})} z)=(x\vee_{\scriptscriptstyle f(\mathbf{A})} y)\vee_{\scriptscriptstyle f(\mathbf{A})} z$ by (QMV*13).

(QW*12) We have $x\to (y\vee_{\scriptscriptstyle f(\mathbf{A})} z)=-x\oplus (y\vee_{\scriptscriptstyle \mathbf{A}} z)=(-x\oplus y)\vee_{\scriptscriptstyle \mathbf{A}} (-x\oplus z)=(x\to y)\vee_{\scriptscriptstyle f(\mathbf{A})} (x\to z)$ by (QMV*14).

Hence $f(\textbf{A})=\langle A;\to,\neg,^{+},^{-},1\rangle$ is a quasi-Wajsberg* algebra.
\end{proof}

According to Proposition \ref{p3.2},  given a quasi-MV* algebra $\mathbf{A}=\langle A;\oplus,{-},^{+},^{-},0,1\rangle$, we define $x\to y=-x\oplus y$, then $\langle A;\to,-,^{+},^{-},1\rangle$ is a quasi-Wajsberg* algebra. Hence we have the following example.

\begin{exam}
Let $\textbf{R}^{*}=\langle R^{*}; \oplus_{\scriptscriptstyle \textbf{R}^{*}}, -_{\scriptscriptstyle \textbf{R}^{*}}, ^{+_{\textbf{R}^{*}}}, ^{-_{\textbf{R}^{*}}}, 0_{\scriptscriptstyle \textbf{R}^{*}}, 1_{\scriptscriptstyle \textbf{R}^{*}} \rangle$ be a quasi-MV* algebra defined in Example 2.2.
We define $x\to y=-x \oplus_{\scriptscriptstyle \textbf{R}^{*}} y$, then $\langle R^{*}; \to, -_{\scriptscriptstyle \textbf{R}^{*}}, ^{+_{\textbf{R}^{*}}}, ^{-_{\textbf{R}^{*}}},  1_{\scriptscriptstyle \textbf{R}^{*}} \rangle$ is a quasi-Wajsberg* algebra which is different from Example 3.1.
\end{exam}

\begin{prop}\label{p3.3}
Let $\mathbf{W}=\langle W;\to,\neg,^{+},^{-},1\rangle$ be a quasi-Wajsberg* algebra. If for any $x,y\in W$, we define $0=x\to x$, $x\oplus y=\neg x\to y$ and $-x=\neg x$, then $g(\mathbf{W})=\langle W;\oplus,{-},^{+},^{-},0,1\rangle$ is a quasi-MV* algebra, where $x\vee_{\scriptscriptstyle g(\mathbf{W})} y = (x^{+}\oplus(-x^{+} \oplus y^{+})^{+})\oplus(x^{-}\oplus (-x^{-}\oplus y^{-})^{+})$.
\end{prop}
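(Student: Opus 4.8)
The plan is to verify the fourteen quasi-MV* axioms (QMV*1)--(QMV*14) for $g(\mathbf{W})$ one at a time, exactly dualizing the verification carried out for $f(\mathbf{A})$ in Proposition \ref{p3.2}: every occurrence of $\oplus$, $-$ and $0$ is translated back into $\to$, $\neg$ via $x\oplus y=\neg x\to y$, $-x=\neg x$ and $0=x\to x$, after which the resulting identity is matched against the quasi-Wajsberg* facts already established in Propositions \ref{l0.2}--\ref{pr.4} and Lemma \ref{le.1}. Before starting I would record two bookkeeping facts used throughout. First, $0=\neg 0$ by Remark \ref{r1.1}. Second, the normalization identity $a\oplus 0=\neg a\to 0=\neg 0\to\neg\neg a=0\to a$, which holds by (QW*1), Remark \ref{r1.1} and (QW*8); this is what converts the quasi-MV* ``$\oplus 0$'' idempotent-closure into the quasi-Wajsberg* ``$0\to$'' closure and will be invoked silently in several places.

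With these in hand, the bulk of the axioms collapse to single named identities. Indeed (QMV*7) is $0=\neg 0$; (QMV*8) is $x\oplus(-x)=\neg x\to\neg x=x\to x=0$; (QMV*10) is (QW*8); (QMV*1) is Proposition \ref{l0.2}(1); and (QMV*9) follows from Proposition \ref{l0.2}(2), since both $-(x\oplus y)$ and $-x\oplus(-y)$ rewrite to $x\to\neg y$. For (QMV*11) I would rewrite $-x\oplus(x\oplus y)$ as $x\to(\neg x\to y)$ and apply (QW*9). For (QMV*4) I would use $(x\oplus y)\oplus 0=0\to(\neg x\to y)=\neg x\to y$ by Lemma \ref{le.1}(1). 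For (QMV*3) I would push the negation inside with Proposition \ref{l0.2}(2) and (QW*8) to reach $(x\to\neg 1)\to 1$, rewrite $x\to\neg 1=1\to\neg x$ by Proposition \ref{l0.2}(1), and finish with (QW*3). Axiom (QMV*2) is the associativity-type identity: after pushing negations through with Proposition \ref{l0.2}(2) it reduces to (QW*2) (applied with $\neg x,\neg y$ in place of $x,y$, since $1\oplus x=\neg x\to 1$). Finally (QMV*5) comes from the Remark \ref{r1.1} form of (QW*5), and (QMV*6) from (QW*6) after normalizing the $^{+},^{-}$ terms using Proposition \ref{pr.2}(4) and Proposition \ref{l0.2}(1).

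The genuine work is concentrated in (QMV*12)--(QMV*14), which all hinge on showing that the derived join coincides with the native quasi-Wajsberg* join, i.e. $x\vee_{g(\mathbf{W})} y = x\vee_{\mathbf{W}} y$ for all $x,y$. This is precisely the (QW*10) computation of Proposition \ref{p3.2} read in the reverse direction: I would expand the left-hand side using $-a\oplus b=a\to b$ and $a\oplus b=\neg a\to b$, move the outer negations through with Proposition \ref{l0.2}(2) and (QW*8), and then reconcile the two expressions with Proposition \ref{l0.2}(1) and Proposition \ref{pr.2}(4). Once this equality is secured, (QMV*12) and (QMV*13) are immediate from the commutativity (QW*10) and associativity (QW*11) of $\vee_{\mathbf{W}}$, while (QMV*14) follows from (QW*12) after writing $x\oplus(y\vee z)=\neg x\to(y\vee_{\mathbf{W}} z)$. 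I expect this join-coincidence identity to be the main obstacle: it is the only step that needs a long chain of rewrites rather than a single citation, and it requires care in normalizing the nested $^{+},^{-}$ superscripts (via Proposition \ref{pr.2}) so that the two sides line up term by term.
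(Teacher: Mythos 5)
Your proposal is correct and follows essentially the same route as the paper's proof: a case-by-case verification of (QMV*1)--(QMV*14) by translating back through $x\oplus y=\neg x\to y$, $-x=\neg x$, $0=x\to x$ and citing (QW*1)--(QW*12) together with Proposition \ref{l0.2}, Lemma \ref{le.1} and Proposition \ref{pr.2}(4), with the only substantial computation being the identity $x\vee_{g(\mathbf{W})}y=x\vee_{\mathbf{W}}y$ that feeds (QMV*12)--(QMV*14). The paper's proof carries out exactly the rewriting chain you describe for that join-coincidence step, using (QW*7), (QW*8) and Proposition \ref{pr.2}(4) to reconcile the two expressions.
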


\begin{proof} For any $x,y,z\in A$, we prove that the conditions (QMV*1)--(QMV*14) hold.

(QMV*1) We have $x\oplus y=\neg x\to y=\neg y\to x=y\oplus x$ by Proposition \ref{l0.2}(1).

(QMV*2) From (QW*8), Proposition \ref{l0.2}(2), (1), (QW*7) and (QW*2), we have
\begin{align*}
(1\oplus x)\oplus (y\oplus (1\oplus z))&=\neg(\neg1\to x)\to (\neg y\to (\neg 1\to z)) &&\text{(by Definition)} \\
&=\neg(\neg1\to x)\to \neg\neg(\neg y\to (\neg 1\to z)) &&\text{(by (QW*8))}\\
&=\neg((\neg 1\to x)\to \neg(\neg y\to (\neg 1\to z))) &&\text{(by Proposition \ref{l0.2}(2))}\\
&=\neg((\neg x\to 1)\to \neg(\neg y\to (\neg z\to 1))) &&\text{(by Proposition \ref{l0.2}(1))}\\
&=\neg((\neg x\to 1)\to ((\neg z\to 1)\to \neg y)) &&\text{(by (QW*7))}\\
&=\neg((\neg z\to 1)\to ((\neg x\to 1)\to \neg y)) &&\text{(by (QW*2))}\\
&=\neg((\neg 1\to z)\to ((\neg 1\to x)\to \neg y)) &&\text{(by Proposition \ref{l0.2}(1))}\\
&=\neg((\neg 1\to z)\to (\neg\neg(\neg 1\to x)\to \neg y)) &&\text{(by (QW*8))}\\
&=\neg((\neg 1\to z)\to \neg(\neg(\neg 1\to x)\to y)) &&\text{(by Proposition \ref{l0.2}(2))}\\
&=\neg(\neg(\neg 1\to x)\to y)\to (\neg 1\to z) &&\text{(by (QW*7))}\\
&=((1\oplus x)\oplus y)\oplus(1\oplus z) &&\text{(by Definition)}.
\end{align*}

(QMV*3) We have $(x\oplus 1)\oplus 1=\neg(\neg x\to 1)\to 1=(1\to \neg x)\to 1=1$ by (QW*3).

(QMV*4) We have
\begin{align*}
(x\oplus y)\oplus 0&=\neg(\neg x\to y)\to 0 &&\text{(by Definition)}\\
&=\neg0\to (\neg x\to y) &&\text{(by Proposition \ref{l0.2}(1))}\\
&=0\to (\neg x\to y) &&\text{(by Remark \ref{r1.1})}\\
&=\neg x\to y &&\text{(by Lemma \ref{le.1}(1))}\\
&=x\oplus y &&\text{(by Definition)}.
\end{align*}

(QMV*5) We have
\begin{align*}
x^{+}\oplus 0&=\neg x^{+}\to 0 &&\text{(by Definition)}\\
&=\neg 0\to x^{+} &&\text{(by Proposition \ref{l0.2}(1))}\\
&=0\to x^{+} &&\text{(by Remark \ref{r1.1})}\\
&=(0\to x)^{+} &&\text{(by Remark \ref{r1.1})}\\
&=(\neg0\to x)^{+} &&\text{(by Remark \ref{r1.1})}\\
&=(0\oplus x)^{+} &&\text{(by Definition)}\\
&=(x\oplus 0)^{+} &&\text{(by (QMV*1))}
\end{align*}
and
\begin{align*}
x^{+}\oplus 0&=0\to x^{+} \\
&=(x\to 1)\to 1 &&\text{(by Remark \ref{r1.1})}\\
&=\neg 1\to \neg(x\to 1) &&\text{(by (QW*1))}\\
&=\neg 1\to (1\to x) &&\text{(by (QW*7))}\\
&=\neg 1\to (\neg \neg 1\to x) &&\text{(by (QW*8))}\\
&=1\oplus(-1\oplus x) &&\text{(by Definition)},
\end{align*}
 so $x^{+}\oplus 0=(x\oplus 0)^{+}=1\oplus(-1\oplus x)$. Similarly, we can get $x^{-}\oplus 0=(x\oplus 0)^{-}=-1\oplus(1\oplus x)$.

(QMV*6) From (QW*6), Proposition \ref{pr.3}(3), Proposition \ref{pr.2}(1) and (QMV*7), we have
\begin{align*}
x\oplus y&=\neg x\to y &&\text{(by Definition)}\\
&=(y^{+}\to (\neg x)^{-})\to ((\neg x)^{+}\to y^{-}) &&\text{(by (QW*6))}\\
&=(y^{+}\to \neg x^{+})\to (\neg x^{-}\to y^{-}) &&\text{(by Proposition \ref{pr.2}(4))}\\
&=\neg(\neg x^{+}\to y^{+})\to (\neg x^{-}\to y^{-}) &&\text{(by (QW*7))}\\
&=(x^{+}\oplus y^{+})\oplus(x^{-}\oplus y^{-}) &&\text{(by Definition)}.
\end{align*}

(QMV*7) From Remark \ref{r1.1}, we have $0=\neg 0 = -0$.

(QMV*8) We have $x\oplus (-x)=\neg x\to \neg x= 0$ by Remark \ref{r1.1}.

(QMV*9) By Proposition \ref{l0.2}(2), we have $-(x\oplus y)=\neg(\neg x\to y)=\neg \neg x\to \neg y=-x\oplus (-y)$.

(QMV*10) We have $-(-x)=\neg \neg x=x$ by (QW*8).

(QMV*11) We have
\begin{align*}
(-x\oplus(x\oplus y))^{+}&=(\neg \neg x\to (\neg x\to y))^{+} &&\text{(by Definition)}\\
&=(x\to (\neg x\to y))^{+} &&\text{(by (QW*8))}\\
&=x^{+}\to (\neg x^{+}\to y^{+}) &&\text{(by (QW*9))}\\
&=\neg\neg x^{+}\to (\neg x^{+}\to y^{+}) &&\text{(by (QW*8))}\\
&=-x^{+}\oplus(x^{+}\oplus y^{+}) &&\text{(by Definition)}.
\end{align*}

(QMV*12) Using (QW*8), (QW*7), Proposition \ref{l0.2}(1),  and Proposition \ref{pr.2}(1)), we have
\begin{align*}
x\vee_{\scriptscriptstyle g(\mathbf{W})} y&=(x^{+}\oplus(-x^{+} \oplus y^{+})^{+})\oplus(x^{-}\oplus (-x^{-}\oplus y^{-})^{+}) &&\text{(by Definition)}\\
&=\neg(\neg x^{+}\to (\neg \neg x^{+}\to y^{+})^{+})\to(\neg x^{-}\to(\neg \neg x^{-}\to y^{-})^{+}) &&\text{(by Definition)} \\
&=((x^{+}\to y^{+})^{+}\to \neg x^{+})\to (\neg(x^{-}\to y^{-})^{+}\to x^{-}) &&\text{(by (QW*7) and (QW*8))}\\
&=((x^{+}\to y^{+})^{+}\to (\neg x)^{-})\to ((\neg(x^{-}\to y^{-}))^{-}\to x^{-}) &&\text{(by Proposition \ref{pr.2}(4))}\\
&=((x^{+}\to y^{+})^{+}\to (\neg x)^{-})\to ((y^{-}\to x^{-})^{-}\to x^{-}) &&\text{(by (QW*7))}\\
&=x\vee_{\scriptscriptstyle \mathbf{W}} y,
\end{align*}
 so $x\vee_{\scriptscriptstyle g(\mathbf{W})} y=x\vee_{\scriptscriptstyle \mathbf{W}} y=y\vee_{\scriptscriptstyle \mathbf{W}} x=y\vee_{\scriptscriptstyle g(\mathbf{W})} x$ by (QW*10).

(QMV*13) Based on the proof of (QMV*12), we have $x\vee_{\scriptscriptstyle g(\mathbf{W})}(y\vee_{\scriptscriptstyle g(\mathbf{W})} z)=(x\vee_{\scriptscriptstyle g(\mathbf{W})} y)\vee_{\scriptscriptstyle g(\mathbf{W})} z$ by (QW*11).

(QMV*14) We have $x\oplus (y\vee_{\scriptscriptstyle g(\mathbf{W})} z)=\neg x\to (y\vee_{\scriptscriptstyle \mathbf{W}} z)=(\neg x\to y)\vee_{\scriptscriptstyle \mathbf{W}} (\neg x\to z)=(x\oplus y)\vee_{\scriptscriptstyle g(\mathbf{W})} (x\oplus z)$ by (QW*12).

Hence $g(\textbf{W})=\langle W;\oplus,{-},^{+},^{-},0,1\rangle$ is a quasi-MV* algebra.
\end{proof}

According to Proposition \ref{p3.3},  given a quasi-Wajsberg* algebra $\mathbf{W}=\langle W;\to,\neg,^{+},^{-},$ $1\rangle$, we define $x\oplus y=\neg x\to y$, then $\langle W;\oplus,\neg,^{+},^{-},0, 1\rangle$ is a quasi-MV* algebra where $0=x\to x$. Hence we have the following example.

\begin{exam}
Let $\mathbf{R}^*= \langle R^{*}; \to, \neg, ^{+}, ^{-}, 1_{\scriptscriptstyle \mathbf{R}^*} \rangle$ be a quasi-Wajsberg* algebra defined in Example 3.1.
Define $x\oplus y=\neg x\to y$ and $0=x\to x$ for any $x,y \in R^*$. Then $\langle R^{*}; \oplus, \neg, ^{+}, ^{-}, 0, 1_{\scriptscriptstyle \mathbf{R}^*} \rangle$ is a quasi-MV* algebra which is different from Example 2.2.
\end{exam}

\begin{thm}\label{th2} The following conclusions hold.
\begin{itemize}
  \item Let $\mathbf{A}=\langle A;\oplus,{-},^{+},^{-},0,1\rangle$ be a quasi-MV* algebra. Then $gf(\mathbf{A})=\mathbf{A}$.
  \item Let $\mathbf{W}=\langle W;\to,\neg,^{+},^{-},1\rangle$ be a quasi-Wajsberg* algebra. Then $fg(\mathbf{W})=\mathbf{W}$.
\end{itemize}
So $f$ and $g$ are mutually inverse correspondence.
\end{thm}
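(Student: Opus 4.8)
The plan is to exploit the fact that both composites act on the \emph{same} carrier set as the algebra one starts with, so that proving $gf(\mathbf{A})=\mathbf{A}$ and $fg(\mathbf{W})=\mathbf{W}$ amounts to checking that each composite reproduces the original operations one symbol at a time. The first thing I would note is that $f$ and $g$ transport the unary operations ${}^{+}$, ${}^{-}$ and the constant $1$ verbatim, without modifying them; hence in either composite these three symbols automatically agree with those of the starting algebra, and the whole verification collapses to the binary operation, the negation, and---in the quasi-MV* direction only---the constant $0$. Moreover, Propositions \ref{p3.2} and \ref{p3.3} already guarantee that $f(\mathbf{A})$ and $g(\mathbf{W})$ are genuine algebras of the correct type, so no axioms need to be re-examined.

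For the first bullet I would form $f(\mathbf{A})$, in which $x\to y=-x\oplus y$ and $\neg x=-x$, and then apply $g$. The resulting sum is $\neg x\to y$, which unfolds to $-(-x)\oplus y=x\oplus y$ by (QMV*10); the resulting negation is again $-x$; and the resulting zero is $x\to x=-x\oplus x=x\oplus(-x)=0$ by (QMV*1) and (QMV*8). For the second bullet I would form $g(\mathbf{W})$, in which $x\oplus y=\neg x\to y$ and $-x=\neg x$, and then apply $f$. The resulting implication is $-x\oplus y=\neg(-x)\to y=\neg\neg x\to y=x\to y$ by (QW*8), the resulting negation is again $\neg x$, and since the quasi-Wajsberg* signature carries no constant $0$, nothing further is required.

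Once all operations are matched, both bullets follow, and together they say exactly that $g\circ f$ and $f\circ g$ are the identity on the respective classes, so $f$ and $g$ are mutually inverse. I do not expect any genuine obstacle here: the only point demanding care is the bookkeeping that keeps track of which algebra each occurrence of $\to$, $\oplus$, $\neg$, or $-$ belongs to, since a single unfolding of the definitions settles each identity.
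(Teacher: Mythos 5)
Your proposal is correct and follows essentially the same route as the paper: both arguments observe that the composites act on the same carrier and then unfold the definitions of $\to$, $\oplus$, and the negations, reducing everything to the double-negation laws (QMV*10) and (QW*8). Your explicit check that the constant $0$ of $gf(\mathbf{A})$ recovers $0_{\mathbf{A}}$ via (QMV*8) is a small detail the paper leaves implicit, but otherwise the two proofs coincide.
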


\begin{proof}
For any $x\in A$, we have that $-_{\scriptscriptstyle gf(\textbf{A})} x=\neg_{\scriptscriptstyle f(\textbf{A})} x =-_{\scriptscriptstyle \textbf{A}} x$ and $x\oplus_{\scriptscriptstyle gf(\textbf{A})} y=-_{\scriptscriptstyle gf(\textbf{A})}(-_{\scriptscriptstyle gf(\textbf{A})} x)\oplus_{\scriptscriptstyle gf(\textbf{A})} y=\neg_{\scriptscriptstyle f(\textbf{A})} x\to_{\scriptscriptstyle f(\textbf{A})} y=x\oplus_{\scriptscriptstyle\textbf{A}} y$ by Proposition \ref{p3.2},  so $gf(\textbf{A})=\textbf{A}$. Similarly, we have  $\neg_{\scriptscriptstyle f g(\textbf{W})} x=-_{\scriptscriptstyle g(\textbf{W})} x=\neg_{\scriptscriptstyle \textbf{W}} x$ and $x\to_{\scriptscriptstyle f g(\textbf{W})} y=\neg_{\scriptscriptstyle f g(\textbf{W})} \neg_{\scriptscriptstyle f g(\textbf{W})} x$ $\to_{\scriptscriptstyle f g(\textbf{W})} y=-_{\scriptscriptstyle g(\textbf{W})} x\oplus_{\scriptscriptstyle g(\textbf{W})} y=x\to_{\scriptscriptstyle \textbf{W}} y$ by Proposition \ref{p3.3}, so we have $fg(\textbf{W})=\textbf{W}$.
\end{proof}

Since quasi-Wajsberg* algebras are term equivalent to quasi-MV* algebras, we can obtain the term equivalence of MV*-algebras similarly.
The algebras which are term equivalent to MV*-algebras are named after Wajsberg*-algebras.

\begin{defn}\label{d1.3}
Let $\textbf{M}=\langle M;\to,\neg,1\rangle$ be an algebra of type $\langle2,1,0\rangle$. If the following conditions are satisfied for any ${x,y,z}\in M$,

(M1) $x\to y=\neg y\to \neg x$,

(M2) $(x\to 1)\to((y\to 1)\to z)=(y\to 1)\to((x\to 1)\to z)$,

(M3) $(1\to x)\to 1=1$,

(M4) $(y\to y)\to x=x$,

(M5) $x\to y=(y^{+}\to x^{-})\to (x^{+}\to y^{-})$,

(M6) $\neg(x\to y)= y\to x$,

(M7) $\neg\neg x=x$,

(M8) $(x\to (\neg x\to y))^{+}=x^{+}\to (\neg x^{+}\to y^{+})$,

(M9) $x\vee y=y\vee x$,

(M10) $x\vee (y\vee z)=(x\vee y)\vee z$,

(M11) $x\to (y\vee z)=(x\to y)\vee (x\to z)$,

\noindent in which $x^{+}=(x\to 1)\to 1$, $x^{-}=(x\to \neg 1)\to \neg 1$, $x\vee y = ((x^{+}\to y^{+})^{+}\to(\neg x)^{-}) \to((y^{-}\to x^{-})^{-}\to x^{-})$, then $\textbf{M}=\langle M;\to,\neg,1\rangle$ is called a \emph{Wajsberg*-algebra}.
\end{defn}

\begin{coro}\label{cor}
For any MV*-algebra $\mathbf{A}=\langle A;\oplus,{-},0,1\rangle$ and Wajsberg*-algebra $\mathbf{M}=\langle M;\to,\neg,1\rangle$. We have

\emph{(1)} If for any $x,y\in A$, we define $x\to y=-x\oplus y$ and $\neg x=-x$, then $f(\mathbf{A})=\langle A;\to,\neg,1\rangle$ is a Wajsberg*-algebra.

\emph{(2)} If for any $x,y\in M$, we define $0=x\to x$, $x\oplus y=\neg x\to y$ and $-x=\neg x$, then $g(\mathbf{M})=\langle M;\oplus,{-},0,1\rangle$ is an MV*-algebra.

\emph{(3)} $gf(\mathbf{A})=\mathbf{A}$, $fg(\mathbf{M})=\mathbf{M}$, and $f$ and $g$ are mutually inverse correspondence.
\end{coro}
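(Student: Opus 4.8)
The plan is to deduce Corollary~\ref{cor} from the term equivalence already established at the quasi level---Propositions~\ref{p3.2}, \ref{p3.3} and Theorem~\ref{th2}---instead of repeating the verifications. The observation I would isolate first is that an MV*-algebra is precisely a quasi-MV* algebra satisfying the stronger identity $x\oplus 0=x$, once $^{+}$ and $^{-}$ are adjoined as the definable terms $x^{+}=1\oplus(-1\oplus x)$ and $x^{-}=-1\oplus(1\oplus x)$; dually, a Wajsberg*-algebra is precisely a quasi-Wajsberg* algebra satisfying $0\to x=x$, with $x^{+}=(x\to 1)\to 1$ and $x^{-}=(x\to\neg 1)\to\neg 1$. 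To justify the MV* half I would note that $x\oplus 0=x$ is exactly (MV*5), that it makes (QMV*4) trivial, and that under it the two clauses of (QMV*5) collapse to the defining equations for $^{+},^{-}$ (so the primitive $x^{+}$ satisfies $x^{+}=x^{+}\oplus 0=1\oplus(-1\oplus x)$); the remaining quasi-axioms coincide with (MV*1)--(MV*12) up to the derivable identity $0=-0$. The Wajsberg* half is symmetric: (QW*4) strengthens to (M4), and (QW*5), read as in Remark~\ref{r1.1}, forces $x^{+}=0\to x^{+}=(x\to 1)\to 1$.

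For item (1) I would regard the MV*-algebra $\mathbf{A}$ as a quasi-MV* algebra with the definable $^{+},^{-}$ and apply Proposition~\ref{p3.2}, obtaining a quasi-Wajsberg* algebra $f(\mathbf{A})$. It then remains only to verify the single extra identity $0\to x=x$: by the definition of $\to$ one has $0\to x=\neg 0\oplus x=0\oplus x=x\oplus 0=x$, using $\neg 0=-0=0$ (QMV*7) and (MV*5). By the correspondence above this promotes $f(\mathbf{A})$ to a Wajsberg*-algebra, and since $0\to x=x$ makes the primitive $^{+},^{-}$ coincide with their definable counterparts, the structure is of the intended type $\langle A;\to,\neg,1\rangle$. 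Item (2) is dual: Proposition~\ref{p3.3} turns a Wajsberg*-algebra $\mathbf{M}$ into a quasi-MV* algebra $g(\mathbf{M})$, and I would check $x\oplus 0=x$ by computing $x\oplus 0=\neg x\to 0=\neg 0\to x=0\to x=x$ via Proposition~\ref{l0.2}(1), Remark~\ref{r1.1} and (M4); hence $g(\mathbf{M})$ is an MV*-algebra.

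Item (3) would then be inherited directly from Theorem~\ref{th2}. Because $\oplus,-$ and $\to,\neg$ are introduced here by exactly the same terms as in the quasi-setting, the equalities $gf(\mathbf{A})=\mathbf{A}$ and $fg(\mathbf{M})=\mathbf{M}$ are special cases of the corresponding identities of Theorem~\ref{th2}, restricted to the subvarieties cut out by $x\oplus 0=x$ and $0\to x=x$; items (1) and (2) show that $f$ and $g$ carry these subvarieties onto one another, so the restricted maps are mutually inverse.

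I expect the main obstacle to be the bookkeeping around the two presentations of $^{+}$ and $^{-}$, which are derived operations in MV*- and Wajsberg*-algebras but primitive in the quasi-versions. The one genuinely delicate point is showing that each variety's extra identity forces the primitive $^{+},^{-}$ to agree with the intended definable terms; this is exactly what licenses transporting the axioms that mention $^{+},^{-}$---namely (QW*9)--(QW*12) and (QMV*11)--(QMV*14)---between the two settings. Once this identification is secured, every remaining axiom transfers verbatim and the corollary follows from the quasi-case with no further computation; alternatively, one could verify (M1)--(M11) and (MV*1)--(MV*12) directly as in Propositions~\ref{p3.2} and \ref{p3.3}, the computations being shorter here since $x\oplus 0=x$ eliminates all the trailing $\oplus 0$ terms.
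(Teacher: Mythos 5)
Your proposal is correct, and it takes a different route from the paper. The paper offers no explicit argument for Corollary~\ref{cor}: it is stated immediately after the remark that the term equivalence of MV*-algebras ``can be obtained similarly,'' so the intended proof is a direct re-verification of (M1)--(M11) and (MV*1)--(MV*12) along the lines of Propositions~\ref{p3.2} and~\ref{p3.3} (the alternative you mention in your last sentence). You instead reduce the corollary to the quasi-level results by identifying MV*-algebras with quasi-MV* algebras satisfying $x\oplus 0=x$ (with $^{+},^{-}$ adjoined as the definable terms) and Wajsberg*-algebras with quasi-Wajsberg* algebras satisfying $0\to x=x$, then observing that $f$ and $g$ exchange these extra identities via the one-line computations $0\to x=-0\oplus x=x\oplus 0=x$ and $x\oplus 0=\neg x\to 0=\neg 0\to x=0\to x=x$. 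This buys you the whole corollary from Propositions~\ref{p3.2}, \ref{p3.3} and Theorem~\ref{th2} with no repeated axiom-checking; the price is the ``delicate point'' you correctly flag, namely that the extra identity forces the primitive $^{+},^{-}$ to coincide with the definable terms (via (QMV*5) and $x^{+}\oplus 0=x^{+}$, resp.\ (QW*5) and $0\to x^{+}=x^{+}$), which is what legitimizes transporting the axioms mentioning $^{+},^{-}$ and $\vee$. That identification, together with the derivability of $0=-0$ from (MV*3) and (MV*5), is all that is needed, and your sketch covers both. Your argument is, if anything, tighter than the paper's implicit one, since it makes explicit why no new computation is required.
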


Moreover, according to the equivalences in Theorem \ref{th2} and Corollary \ref{cor}, we can restate the
known algebraic theorems for quasi-MV* algebras (MV*-algebras) in terms of quasi-Wajsberg* algebras (Wajsberg*-algebras) without providing any additional proof.

\begin{coro}\label{l0.4}
Let $\mathbf{W}$ be a quasi-Wajsberg* algebra. Then for any $x,y,u,v\in W$, we have

\emph{(1)} If $x\le y$ and $y\le x$, then $0\to x=0\to y$,

\emph{(2)} $\neg 1\le x\le 1$,

\emph{(3)} $x\le 0\to x$ and $0\to x\le x$,

\emph{(4)} If $x\le y$ and $u\le v$, then $x\vee u\le y\vee v$,

\emph{(5)} If $x\le y$ and $u\le v$, then $y\wedge u\le x\wedge v$,

\emph{(6)} $x\le y$ iff $x\wedge y=0\to x$,

\emph{(7)} If $x\le y$, then $\neg y\le \neg x$,

\emph{(8)} If $x\le y$ and $u\le v$, then $y \to u\le x\to v$,

\emph{(9)} If $x\le y$, then $x^+\le y^+$ and $x^-\le y^-$,

\emph{(10)} $x^-\le x\le x^+$.
\end{coro}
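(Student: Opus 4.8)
The plan is to exploit the term equivalence recorded in Theorem \ref{th2}: every item of the corollary is nothing but the restatement, in the language of $\mathbf{W}$, of a corresponding item of Lemma \ref{l0.3} applied to the quasi-MV* algebra $g(\mathbf{W})$. Since $fg(\mathbf{W})=\mathbf{W}$, all the operations of $\mathbf{W}$ are recovered from those of $g(\mathbf{W})$, so no fresh algebraic manipulation is needed; it suffices to set up the translation dictionary carefully and then read off each statement from the appropriate line of Lemma \ref{l0.3}. This is exactly the mechanism announced in the remark preceding the corollary.

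First I would fix the dictionary. By the definition of $g(\mathbf{W})$ in Proposition \ref{p3.3} we have $-x=\neg x$, the unary operations $^+$, $^-$ and the constant $1$ are literally unchanged, and $0=1\to 1$ corresponds to the zero of $g(\mathbf{W})$. Using (QW*1), $\neg 0=0$ from Remark \ref{r1.1}, and (QW*8), one obtains $x\oplus_{g(\mathbf{W})}0=\neg x\to 0=0\to x$, so the term $0\to x$ of $\mathbf{W}$ is precisely $x\oplus 0$ of $g(\mathbf{W})$. The computation of (QMV*12) inside the proof of Proposition \ref{p3.3} already shows $x\vee_{g(\mathbf{W})}y=x\vee_{\mathbf{W}}y$, and since in $\mathbf{W}$ the meet is the De Morgan dual $\neg(\neg x\vee\neg y)$ while $\neg=-$, the two meets coincide as well. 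Finally the orders agree: $x\le y$ in $\mathbf{W}$ means $x\vee y=0\to y$, which under the dictionary is exactly $x\vee_{g(\mathbf{W})}y=y\oplus 0$, the order of $g(\mathbf{W})$.

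With this dictionary in place the items transcribe directly: (1), (2), (3), (6), (7), (9) and (10) are the translations of Lemma \ref{l0.3}(1), (2), (3), (8), (9), (5) and (13) respectively, while (4) and (5) are the monotonicity of $\vee$ and $\wedge$, i.e. Lemma \ref{l0.3}(6) and (7). The only item that is not a verbatim translation is (8): since $x\to y=\neg x\oplus y$ corresponds to $-x\oplus y$, the implication is antitone in its first argument, so from $x\le y$ one first invokes Lemma \ref{l0.3}(9) to get $\neg y\le\neg x$ and then combines this with $u\le v$ through the monotonicity of $\oplus$ (Lemma \ref{l0.3}(4)) to reach $y\to u=\neg y\oplus u\le\neg x\oplus v=x\to v$.

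The substantive content all sits in Lemma \ref{l0.3}; what demands genuine care is the bookkeeping of the dictionary, together with the order-reversal packaged into item (8). I expect the main obstacle to be pinning down, once and for all, that the relation $\le$ on $\mathbf{W}$ coincides with the order on $g(\mathbf{W})$: once $0\to y=y\oplus 0$ and the preservation of $\vee$ are in hand this is immediate, but it is the step on which every other item silently rests.
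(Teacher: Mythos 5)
Your proposal is correct and is exactly the paper's intended argument: the paper states this corollary with no proof at all, remarking just beforehand that the term equivalence of Theorem \ref{th2} lets one transcribe Lemma \ref{l0.3} verbatim, and your dictionary (including the identification $y\oplus 0=\neg y\to 0=0\to y$, the coincidence of $\vee_{g(\mathbf{W})}$ with $\vee_{\mathbf{W}}$, and the antitone handling of item (8) via Lemma \ref{l0.3}(4) and (9)) supplies precisely the bookkeeping the paper leaves implicit. The only mismatch is in item (5), where the paper's printed conclusion $y\wedge u\le x\wedge v$ differs from the monotonicity statement $x\wedge u\le y\wedge v$ that the translation of Lemma \ref{l0.3}(7) actually yields --- an apparent typo in the paper rather than a gap in your argument.
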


\section{Direct products and congruences of quasi-Wajsberg* algebras}
In this section, we study the properties about direct products of quasi-Wajsberg* algebras, and discuss several special congruences on quasi-Wajsberg* algebras and the properties of the quotient algebras.

Let $\textbf{W} = \langle W; \to_{\scriptscriptstyle W}, \neg_{\scriptscriptstyle W}, ^{+_{\scriptscriptstyle W}}, ^{-_{\scriptscriptstyle W}}, 1_{\scriptscriptstyle W} \rangle$ and $\textbf{V}= \langle V; \to_{\scriptscriptstyle V}, \neg_{\scriptscriptstyle V}, ^{+_{\scriptscriptstyle V}}, ^{-_{\scriptscriptstyle V}}, 1_{\scriptscriptstyle V} \rangle$ be quasi-Wajsberg* algebras. If for any $\langle x,y\rangle, \langle u,v\rangle\in W\times V$, we define $\neg\langle x,y\rangle=\langle \neg_{\scriptscriptstyle W} x,\neg_{\scriptscriptstyle V} y\rangle$, $\langle x,y\rangle\to\langle u,v\rangle =\langle x\to_{\scriptscriptstyle W} u, y\to_{\scriptscriptstyle V} v\rangle$, ${\langle x,y\rangle}^{+}=\langle x^{+_{W}},y^{+_{V}}\rangle$ and ${\langle x,y\rangle}^{-}=\langle x^{-_{W}},y^{-_{V}}\rangle$, then $\textbf{W}\times \textbf{V}=\langle W\times V; \to, \neg, ^{+}, ^{-}, \langle 1_{\scriptscriptstyle W},1_{\scriptscriptstyle V}\rangle\rangle$ is the \emph{direct product} of $\textbf{W}$ and $\textbf{V}$.
Moreover, $\textbf{W}\times \textbf{V}=\langle W\times V; \to, \neg, ^{+}, ^{-}, \langle 1_{\scriptscriptstyle W},1_{\scriptscriptstyle V}\rangle\rangle$ is a quasi-Wajsberg* algebra.

\begin{coro} Let $\{\mathbf{W}_i: i\in I\}$ be a family of quasi-Wajsberg* algebras. Then $\prod_{i\in I}\mathbf{W}_i$ is a quasi-Wajsberg* algebra.
\end{coro}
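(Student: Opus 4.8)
The plan is to mimic the binary case treated immediately above, extending the componentwise definition of the operations from two factors to an arbitrary index set $I$. Write elements of $P:=\prod_{i\in I}W_i$ as tuples $x=(x_i)_{i\in I}$ with $x_i\in W_i$, and define the operations pointwise: $\neg x=(\neg_i x_i)_{i\in I}$, $x\to y=(x_i\to_i y_i)_{i\in I}$, $x^{+}=(x_i^{+_i})_{i\in I}$, $x^{-}=(x_i^{-_i})_{i\in I}$, and $1=(1_i)_{i\in I}$. These equip $P$ with the structure of an algebra of type $\langle 2,1,1,1,0\rangle$, matching Definition \ref{dqw}.

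First I would record the key fact that the derived operation $\vee$ on $P$ is also computed coordinatewise. Since $x\vee y$ is, by Definition \ref{dqw}, a fixed term built from $\to$, $\neg$, $^{+}$, $^{-}$ applied to $x$ and $y$, and each basic operation acts pointwise, unwinding this term yields $(x\vee y)_i=x_i\vee_i y_i$ for every $i\in I$; the corresponding identity for $\wedge$ then follows via Remark \ref{n0.1}. This is the one step that requires care, since it is precisely what makes the componentwise treatment of the lattice-type axioms (QW*10)--(QW*12) legitimate.

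Next I would verify the axioms (QW*1)--(QW*12). Each axiom is an identity in the operations $\to$, $\neg$, $^{+}$, $^{-}$, $1$ (and the derived $\vee$). Fixing such an identity $s=t$ and elements of $P$, I would evaluate both sides coordinatewise; since every factor $\mathbf{W}_i$ satisfies the same identity, we obtain $s_i=t_i$ for all $i\in I$, whence $s=t$ in $P$. Running this argument through each of (QW*1)--(QW*12) shows that $P=\prod_{i\in I}\mathbf{W}_i$ is a quasi-Wajsberg* algebra.

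I do not expect a genuine obstacle here. The entire content of the statement is that quasi-Wajsberg* algebras are defined purely by identities, so the class is a \emph{variety} and is therefore closed under arbitrary direct products; one could equally well simply invoke this general fact from universal algebra in place of the explicit verification. Once the coordinatewise computation of $\vee$ and $\wedge$ is in place, every axiom transfers from the factors to the product verbatim, exactly as in the binary case $\mathbf{W}\times\mathbf{V}$ established above.
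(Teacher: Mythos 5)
Your proposal is correct and matches the paper's (implicit) reasoning: the paper states this corollary without proof, treating it as immediate from the componentwise definition of the operations and the fact that all the defining conditions (QW*1)--(QW*12) are identities, so the class is a variety closed under arbitrary direct products. Your explicit note that the derived operation $\vee$ is itself a term and hence computed coordinatewise is exactly the one point worth recording, and you handle it correctly.
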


Let $\mathbf{W}$ be a quasi-Wajsberg* algebra. We denote $R(W)=\{x\in W: 0\to x=x\}$, from Proposition \ref{pr.1}(1) and Lemma \ref{le.1}(1), we can get that $1,\neg 1, x\to y, \neg(x\to y)\in R(W)$.
Moreover, we have the following results.

\begin{prop}\label{p0.1}
Let $\mathbf{W}$ be a quasi-Wajsberg* algebra. For any $x,y\in W$, we define
\[\langle x,y\rangle \in \mu\ \text{iff}\ x\le y \ \text{and}\ y\le x,\]
\[\langle x,y\rangle \in\tau\ \text{iff}\ x=y \ \text{or}\ x,y\in R(W).\]
Then $\mu$ and $\tau$ are congruences on $\mathbf{W}$.
\end{prop}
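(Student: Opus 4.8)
The plan is to handle $\mu$ and $\tau$ separately, and in each case to first extract a workable description of the relation before verifying the equivalence-relation axioms and the substitution property for the four operations $\to,\neg,{}^{+},{}^{-}$. For $\mu$ the crucial first step is to establish the characterization
\[
\langle x,y\rangle\in\mu \iff 0\to x=0\to y .
\]
The forward implication is exactly Corollary \ref{l0.4}(1). For the converse, assuming $0\to x=0\to y$, I would compute $x\vee y=(0\to x)\vee(0\to y)$ by Proposition \ref{pr.3}(4), substitute $0\to x=0\to y$ to get $(0\to y)\vee(0\to y)$, collapse this to $0\to(0\to y)$ by Proposition \ref{pr.3}(1), and finally invoke the instance $x\to y=x\to(0\to y)$ of Proposition \ref{pr.3}(3) with $x=0$ to reduce $0\to(0\to y)$ to $0\to y$; this yields $x\vee y=0\to y$, i.e.\ $x\le y$, and the symmetric computation gives $y\le x$. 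Once this is in hand, $\mu$ is visibly the kernel of the map $x\mapsto 0\to x$, hence an equivalence relation.

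It then remains to check compatibility of $\mu$, which I would do operation by operation. If $0\to x=0\to y$ and $0\to u=0\to v$, then $x\to u=(0\to x)\to(0\to u)=(0\to y)\to(0\to v)=y\to v$ by Proposition \ref{pr.3}(3), so $\langle x\to u,y\to v\rangle\in\mu$ holds trivially. For $\neg$ I use $0\to\neg x=\neg 0\to\neg x=\neg(0\to x)$, combining Proposition \ref{l0.2}(2) with $\neg 0=0$ from Remark \ref{r1.1}, which turns the hypothesis directly into $0\to\neg x=0\to\neg y$. For ${}^{+}$ and ${}^{-}$ the rewritings $0\to x^{+}=(0\to x)^{+}$ and $0\to x^{-}=(0\to x)^{-}$ of Remark \ref{r1.1} likewise convert the hypothesis into $0\to x^{+}=0\to y^{+}$ and $0\to x^{-}=0\to y^{-}$.

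For $\tau$ the equivalence-relation part is a routine case analysis on the two disjuncts ($x=y$ or $x,y\in R(W)$), where transitivity goes through because an equality $x=y$ transports membership in $R(W)$. The substitution property rests on two observations. First, $R(W)$ is closed under $\neg,{}^{+},{}^{-}$: for $x\in R(W)$ one has $0\to\neg x=\neg(0\to x)=\neg x$, and $0\to x^{+}=(0\to x)^{+}=x^{+}$, $0\to x^{-}=(0\to x)^{-}=x^{-}$, again by Proposition \ref{l0.2}(2), Remark \ref{r1.1}, and the defining equality $0\to x=x$. Second, by Lemma \ref{le.1}(1) the value $x\to u$ always lies in $R(W)$, so for the binary operation any two outputs are automatically $\tau$-related with no hypothesis needed. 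Compatibility of $\tau$ under the unary operations then splits into the easy case $x=y$ (outputs coincide) and the case $x,y\in R(W)$ (outputs remain in $R(W)$).

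I expect the only genuine obstacle to be the converse half of the $\mu$-characterization: it is the one place where several of the earlier identities must be chained in the correct order, whereas every other step reduces either to quoting closure of $R(W)$ or to the rewriting rules of Remark \ref{r1.1}. A secondary point worth stating cleanly, to avoid appealing to an unproven transitivity of $\le$, is that my converse argument produces $x\le y$ and $y\le x$ \emph{directly} from $0\to x=0\to y$ via Proposition \ref{pr.3}, rather than by composing inequalities.
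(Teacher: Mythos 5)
Your proof is correct, but the $\mu$ half follows a genuinely different route from the paper's. The paper verifies compatibility of $\mu$ directly from its definition, invoking the monotonicity facts of Corollary \ref{l0.4}: item (7) for $\neg$, item (8) for $\to$, and item (9) for $^{+}$ and $^{-}$, each time deriving the two inequalities $x\le y$, $y\le x$ for the outputs. You instead first prove the characterization $\langle x,y\rangle\in\mu \iff 0\to x=0\to y$ — which the paper only establishes \emph{afterwards}, as Proposition \ref{l0.5}(1), and there via Corollary \ref{l0.4}(6) rather than your direct $\vee$-computation — and then treat $\mu$ as the kernel of $x\mapsto 0\to x$, reducing compatibility to the rewriting identities $0\to\neg x=\neg(0\to x)$, $0\to x^{\pm}=(0\to x)^{\pm}$ and $x\to u=(0\to x)\to(0\to u)$. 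Your route buys several things: the equivalence-relation axioms for $\mu$ come for free; Proposition \ref{l0.5}(1) is absorbed into the argument; and for $\to$ you obtain the literal equality $x\to u=y\to v$, strictly stronger than $\mu$-equivalence (reflecting that such elements lie in $R(W)$). The paper's route is shorter at this spot because it leans on the already-assembled order-theoretic toolkit and defers the kernel description. Your $\tau$ argument matches the paper's, with one clean improvement: you observe via Lemma \ref{le.1}(1) that $x\to u$ and $y\to v$ always lie in $R(W)$, so compatibility of $\tau$ with $\to$ needs no case analysis at all, whereas the paper leaves the disjunction ``$x\to u=y\to v$ or $x\to u,y\to v\in R(W)$'' implicit.
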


\begin{proof}
We can easily get that $\mu, \tau$ are equivalence relations. For any $\langle x,y\rangle\in \mu$, we have $x\le y$ and $y\le x$, it turns out that $\neg x\le \neg y$ and $\neg y\le \neg x$ by Corollary \ref{l0.4}(7), so $\langle \neg x,\neg y\rangle \in \mu$. For any $\langle x,y\rangle, \langle u,v\rangle\in \mu$, we have $x\le y$ and $y\le x$, $u\le v$ and $v\le u$, so $x\to u\le y\to v$ and $y\to v\le x\to u$ by Corollary \ref{l0.4}(8), thus $\langle x\to u, y\to v\rangle \in\mu$. For any $\langle x,y\rangle\in \mu$, then $x\le y$ and $y\le x$, we have $x^+\le y^+$, $x^-\le y^-$, $y^+\le x^+$ and $y^-\le x^-$, so $\langle x^+,y^+\rangle$ and $\langle x^-,y^-\rangle \in \mu$. Hence, the relation $\mu$ is a congruence on $\textbf{W}$. Similarly, for any $\langle x,y\rangle\in \tau$, we have $x=y$ or $x,y\in R(W)$, and then we have $\neg x=\neg y$ or $\neg x,\neg y\in R(W)$, so $\langle \neg x,\neg y\rangle \in \tau$. For any $\langle x,y\rangle, \langle u,v\rangle\in \tau$, we can get $x\to u=y\to v$ or $x\to u,y\to v\in R(W)$.
For any $\langle x,y\rangle\in \tau$, then $x=y$ or $x, y\in R(W)$. If $x=y$, then $x^+=y^+$, we have $\langle x^+,y^+\rangle \in \tau$. If $x,y\in R(W)$, then $0\to x^+=(0\to x)^+=x^+$ and $0\to y^+=(0\to y)^+=y^+$, so $\langle x^+, y^+\rangle \in\tau$. Similarly, we have $\langle x^-, y^-\rangle \in\tau$. Hence $\tau$ is also a congruence on $\textbf{W}$.
\end{proof}

\begin{prop}\label{l0.5}
Let $\mathbf{W}$ be a quasi-Wajsberg* algebra. For any $x,y\in W$, we have

\emph{(1)} $\langle x, y \rangle\in \mu$ iff $0\to x=0\to y$,

\emph{(2)} $\mu \cap \tau=\Delta$ $($the diagonal relation on $\mathbf{W}$\emph{)},

\emph{(3)} denote $G(\mu\cup\tau)=\bigcup\{\theta_1\circ\theta_2\circ\cdots\theta_k$, where $\theta_i=\mu$ or $\theta_i=\tau$, $i=1,2,\cdots,k$ and $k<\infty \}$, then $G(\mu\cup\tau)=\nabla$ $($the all relation on $\mathbf{W}$\emph{)},

\emph{(4)} $\mu \circ \tau=\nabla$ iff $x/\mu \cap y/\tau\neq \varnothing$.
\end{prop}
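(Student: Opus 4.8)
The plan is to treat the four parts in sequence, since each relies on the previous ones. For (1), the forward implication is exactly Corollary \ref{l0.4}(1). For the converse, I would assume $0\to x=0\to y$ and compute the join directly: by Proposition \ref{pr.3}(4) and (1),
\[ x\vee y=(0\to x)\vee y=(0\to y)\vee y=y\vee y=0\to y, \]
so $x\le y$; since $x\vee y=y\vee x$ by (QW*10) and $0\to x=0\to y$, the same equality yields $y\vee x=0\to x$, i.e. $y\le x$. Hence $\langle x,y\rangle\in\mu$.

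For (2), the inclusion $\Delta\subseteq\mu\cap\tau$ is immediate from $x\le x$ and $x=x$. Conversely, if $\langle x,y\rangle\in\mu\cap\tau$ with $x\ne y$, then the definition of $\tau$ forces $x,y\in R(W)$, so $0\to x=x$ and $0\to y=y$; but $\langle x,y\rangle\in\mu$ gives $0\to x=0\to y$ by (1), whence $x=y$. This contradiction yields $\mu\cap\tau=\Delta$.

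The crux is (3). First I would record that $0\to x\in R(W)$ for every $x$, which follows by specialising the identity $x\to(0\to y)=x\to y$ of Proposition \ref{pr.3}(3) to $x:=0,\ y:=x$, giving $0\to(0\to x)=0\to x$. Part (1) then shows $\langle x,0\to x\rangle\in\mu$ and $\langle 0\to y,y\rangle\in\mu$, while $0\to x,\,0\to y\in R(W)$ gives $\langle 0\to x,0\to y\rangle\in\tau$. Chaining these links,
\[ x \mathrel{\mu} (0\to x) \mathrel{\tau} (0\to y) \mathrel{\mu} y, \]
so $\langle x,y\rangle\in\mu\circ\tau\circ\mu$ for arbitrary $x,y$. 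Since $\mu\circ\tau\circ\mu\subseteq G(\mu\cup\tau)\subseteq\nabla$, this forces $G(\mu\cup\tau)=\nabla$.

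Finally, (4) is the standard description of composition for equivalence relations. Unfolding $\langle x,y\rangle\in\mu\circ\tau$ as the existence of $z$ with $x\mathrel{\mu}z$ and $z\mathrel{\tau}y$, and using the symmetry of $\tau$ to rewrite $z\mathrel{\tau}y$ as $z\in y/\tau$, one sees that $\langle x,y\rangle\in\mu\circ\tau$ precisely when $x/\mu\cap y/\tau\ne\varnothing$; quantifying over all $x,y$ gives the stated equivalence. I expect the only genuinely computational step to be the converse of (1); once that join-manipulation is secured, parts (2)--(4) are essentially formal.
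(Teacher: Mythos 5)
Your proposal is correct and follows essentially the same route as the paper: the forward direction of (1) via Corollary \ref{l0.4}, the reduction of (2) to (1) through $R(W)$, the chain $x \mathrel{\mu} (0\to x) \mathrel{\tau} (0\to y) \mathrel{\mu} y$ for (3), and the unfolding of $\mu\circ\tau$ for (4). The only (harmless) differences are that you prove the converse of (1) with the join and the definition of $\le$ where the paper uses the meet and Corollary \ref{l0.4}(6), and that your uniform three-step chain in (3) replaces the paper's four-case analysis on whether $x,y$ lie in $R(W)$.
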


\begin{proof}
(1) We have $\langle x,y\rangle\in \mu$ iff $x\le y$ and $y\le x$ iff $0\to x=x\wedge y=y\wedge x=0\to y$ by Corollary \ref{l0.4}(6).

(2) We have $\langle x,y\rangle\in \mu\cap\tau$ iff $\langle x,y\rangle\in \mu$ and $\langle x,y\rangle\in \tau$ iff $x=y$ iff $\langle x,y\rangle\in \Delta$.

(3) It is clear that $G(\mu\cup\tau)\subseteq\nabla$. We prove that $\nabla\subseteq G(\mu\cup\tau)$ by the following cases. For any $\langle x,y\rangle\in\nabla$,
if $x,y\in R(W)$, we have $\langle x,y\rangle\in \tau \subseteq G(\mu\cup\tau)$. If $x,y\notin R(W)$, then we have $\langle x,0\to x\rangle \in \mu$, $\langle 0\to x, 0\to y\rangle \in \tau$ and $\langle 0\to y, y\rangle \in \mu$ by Corollary \ref{l0.4}(3) and Proposition \ref{p0.1}, so $\langle x,y\rangle\in \mu\circ\tau\circ\mu \subseteq G(\mu\cup\tau)$. If $x\in R(W)$ and $y\notin R(W)$, we have $\langle x, 0\to y\rangle \in \tau$ and $\langle 0\to y, y\rangle \in \mu$, so $\langle x,y\rangle\in \tau\circ\mu \subseteq G(\mu\cup\tau)$. If $x\notin R(W)$ and $y\in R(W)$, we have $\langle x, 0\to x\rangle \in \mu$ and $\langle 0\to x, y\rangle \in \tau$, so $\langle x,y\rangle\in \mu\circ\tau \subseteq G(\mu\cup\tau)$. Therefore, $G(\mu\cup\tau)=\nabla$.

(4) We have that $x/\mu \cap y/\tau\neq \varnothing$ iff there exists $z\in W$ such that $\langle x,z\rangle \in \mu$ and $\langle z,y\rangle \in \tau$ iff $\langle x,y\rangle\in \mu\circ \tau$.
\end{proof}

Let $\textbf{W}$ be a quasi-Wajsberg* algebra and  $\theta$ be a congruence on $\textbf{W}$. For any $x\in W$, the equivalence class of $x$ with respect to $\theta$ is denoted by $x/\theta=\{y\in W|\langle x,y\rangle\in \theta\}$ and the set of all equivalence classes is denoted by $W/\theta$. For any $x/\theta,y/\theta\in W/\theta$, we define $\neg_{\scriptscriptstyle W/\theta}(x/\theta)=(\neg x)/\theta$, $(x/\theta)\to_{\scriptscriptstyle W/\theta}(y/\theta)=(x\to y)/\theta$, $(x/\theta)^{+_{W/\theta}}=x^+/\theta$ and $(x/\theta)^{-_{W/\theta}}=x^-/\theta$. Then $\textbf{W}/\theta=\langle W/\theta; \to_{\scriptscriptstyle W/\theta}, \neg_{\scriptscriptstyle W/\theta}, ^{+_{W/\theta}}, ^{-_{W/\theta}}, 1/\theta \rangle$ is a quasi-Wajsberg* algebra.

\begin{prop}\label{p0.2}
Let $\mathbf{W}$ be a quasi-Wajsberg* algebra. Then $\mathbf{W}/\mu=\langle W/\mu; \to, \neg, 1/\mu \rangle$ is a Wajsberg*-algebra.
\end{prop}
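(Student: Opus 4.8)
The plan is to leverage the fact, recorded just before the statement, that the quotient $\langle W/\mu; \to, \neg, {}^{+}, {}^{-}, 1/\mu\rangle$ is itself a quasi-Wajsberg* algebra, since $\mu$ is a congruence by Proposition \ref{p0.1}. Consequently every identity (QW*1)--(QW*12) holds in $W/\mu$. To promote this reduct to a Wajsberg*-algebra in the sense of Definition \ref{d1.3} I must accomplish two things: first, show that the inherited unary operations ${}^{+}$ and ${}^{-}$ become term-definable in $W/\mu$ by the Wajsberg* terms $(x\to 1)\to 1$ and $(x\to \neg 1)\to \neg 1$, so that they may legitimately be dropped from the signature; and second, verify the single genuinely stronger axiom (M4), namely $(y\to y)\to x = x$. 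Every remaining Wajsberg* axiom (M1)--(M3) and (M5)--(M11) is literally the corresponding quasi-Wajsberg* identity, so it transports to $W/\mu$ automatically once the first point is settled.

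The engine behind both required facts is that $\mu$ collapses each $x$ with $0\to x$, where $0 = 1\to 1$. Indeed, by Corollary \ref{l0.4}(3) we have $x\le 0\to x$ and $0\to x\le x$, hence $\langle x, 0\to x\rangle \in \mu$; equivalently, $0\to(0\to x) = 0\to x$ via Proposition \ref{l0.5}(1). The immediate consequence is that the class $0/\mu$ acts as a \emph{left identity} for $\to$ in the quotient: $0/\mu \to (x/\mu) = (0\to x)/\mu = x/\mu$ for every $x\in W$.

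From this left-identity property the two points follow quickly. For (M4), since $y\to y = 0$ in $W$ by Remark \ref{r1.1}, I compute $(y\to y)/\mu \to (x/\mu) = 0/\mu \to (x/\mu) = x/\mu$, which is exactly the instance (M4) read in $W/\mu$. For term-definability, I combine the form of (QW*5) given in Remark \ref{r1.1} with the same collapse: $((x\to 1)\to 1)/\mu = (0\to x^{+})/\mu = x^{+}/\mu = (x/\mu)^{+}$, and symmetrically $((x\to \neg 1)\to \neg 1)/\mu = (x/\mu)^{-}$. Thus in $W/\mu$ the inherited ${}^{+}, {}^{-}$ coincide with their defining Wajsberg* terms, so the derived operation $\vee$ agrees as well, and discarding ${}^{+}, {}^{-}$ from the signature changes nothing.

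With these preliminaries in place the verification is routine: (M1)=(QW*1), (M2)=(QW*2), (M3)=(QW*3), (M5)=(QW*6), (M6)=(QW*7), (M7)=(QW*8), (M8)=(QW*9), and (M9)--(M11)=(QW*10)--(QW*12), each holding in $W/\mu$ because the corresponding quasi-Wajsberg* identity does and because every occurrence of ${}^{+}, {}^{-}$ equals its defining term by the previous paragraph. I expect the only substantive step to be (M4): this is precisely the axiom that a quasi-Wajsberg* algebra weakens to (QW*4), and it is the raison d'\^etre of the congruence $\mu$, whose definition by mutual $\le$ is exactly calibrated to force $0\to x = x$ in the quotient.
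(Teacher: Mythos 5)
Your proof is correct and follows essentially the same route as the paper: both reduce the claim to verifying $(0\to x)/\mu = x/\mu$, which follows from $x\le 0\to x$ and $0\to x\le x$ (Corollary \ref{l0.4}(3)) together with the definition of $\mu$. You are in fact somewhat more careful than the paper, which silently passes over the point you make explicit --- that the inherited operations ${}^{+}$ and ${}^{-}$ collapse onto the Wajsberg* terms $(x\to 1)\to 1$ and $(x\to\neg 1)\to\neg 1$ in the quotient, so that the reduct to the signature $\langle\to,\neg,1\rangle$ genuinely satisfies (M5), (M8)--(M11).
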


\begin{proof}
We only need to verify that $0\to x/\mu=(0\to x)/\mu=x/\mu$ for any $x/\mu \in W/\mu$. Indeed, $z\in (0\to x)/\mu$ iff $z\le 0\to x$ and $0\to x\le z$ iff $z\le x$ and $x\le z$ iff $z\in x/\mu$ by Proposition \ref{l0.5}(1) Corollary \ref{l0.4}(3).
\end{proof}

\begin{lem}\label{l0.6}
Let $\mathbf{W}$ be a quasi-Wajsberg* algebra. Then we have

\emph{(1)} $\neg(1/\tau)=1/\tau$,

\emph{(2)} $(x/\tau)\vee (y/\tau)=0/\tau=(x/\tau)\wedge (y/\tau)=(x/\tau)\rightarrow (y/\tau)$.

\begin{proof}

(1) Since $1, \neg 1\in R(W)$ by Proposition \ref{pr.1}(2), we have $\langle \neg1,1 \rangle \in \tau$ and then $\neg(1/\tau)=1/\tau$.

(2) Since $(x/\tau)\vee (y/\tau)=(x\vee y)/\tau$, $(x/\tau)\wedge (y/\tau)=(x\wedge y)/\tau$ and $(x/\tau)\rightarrow (y/\tau) =(x\to y)/\tau$, from Lemma \ref{le.1}(1) and Proposition \ref{pr.3}(4), we have $x\to y, x\vee y, x\wedge y \in R(W)$. Hence $\langle 0, x\to y \rangle, \langle 0, x\vee y \rangle, \langle 0, x\wedge y \rangle \in \tau$, it follows that $(x/\tau)\vee (y/\tau)=0/\tau=(x/\tau)\wedge (y/\tau)=(x/\tau)\rightarrow (y/\tau)$.
\end{proof}
\end{lem}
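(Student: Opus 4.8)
The plan is to reduce both parts to the single observation that $\tau$ collapses the entire set $R(W)=\{w\in W:0\to w=w\}$ to one class: by the definition in Proposition \ref{p0.1}, any two elements of $R(W)$ are $\tau$-related. Since $\tau$ is a congruence, the quotient operations act on representatives, so $\neg(1/\tau)=(\neg 1)/\tau$, $(x/\tau)\to(y/\tau)=(x\to y)/\tau$, $(x/\tau)\vee(y/\tau)=(x\vee y)/\tau$ and $(x/\tau)\wedge(y/\tau)=(x\wedge y)/\tau$. Each asserted identity will then follow once I verify that the representatives involved lie in $R(W)$.

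For part (1), I would invoke the membership facts recorded just before Proposition \ref{p0.1}, namely $1\in R(W)$ and $\neg 1\in R(W)$ (which come from $0\to 1=1$ and $0\to\neg 1=\neg 1$ in Proposition \ref{pr.1}(1)). Because both $\neg 1$ and $1$ lie in $R(W)$, the definition of $\tau$ gives $\langle\neg 1,1\rangle\in\tau$, and hence $\neg(1/\tau)=(\neg 1)/\tau=1/\tau$.

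For part (2), I would assemble four membership facts: $0\in R(W)$, $x\to y\in R(W)$, $x\vee y\in R(W)$ and $x\wedge y\in R(W)$. The first two are already available from the discussion preceding Proposition \ref{p0.1}, where every term of the form $x\to y$ is shown to satisfy $0\to(x\to y)=x\to y$; in particular $0=1\to 1$ is such a term, so $0\in R(W)$. The remaining two are precisely Proposition \ref{pr.3}(4), which states $x\vee y=0\to(x\vee y)$ and $x\wedge y=0\to(x\wedge y)$. With $x\to y$, $x\vee y$, $x\wedge y$ and $0$ all in $R(W)$, the definition of $\tau$ yields $\langle 0,x\to y\rangle,\langle 0,x\vee y\rangle,\langle 0,x\wedge y\rangle\in\tau$, and therefore $(x/\tau)\vee(y/\tau)=(x/\tau)\wedge(y/\tau)=(x/\tau)\to(y/\tau)=0/\tau$.

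I do not expect any genuine obstacle: the argument is pure bookkeeping once the $R(W)$-memberships are in hand. The only clause that is not immediate from the outward shape of a term is $x\vee y,x\wedge y\in R(W)$, since this is invisible from the raw definition of $\vee$ and must be imported from the earlier computation Proposition \ref{pr.3}(4). Everything else is a direct appeal to the representative-wise behaviour of the quotient operations and to the clause of $\tau$ that identifies all elements of $R(W)$.
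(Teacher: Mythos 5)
Your proof is correct and follows essentially the same route as the paper's: both parts reduce to checking that the relevant representatives ($1$, $\neg 1$, $0$, $x\to y$, $x\vee y$, $x\wedge y$) lie in $R(W)$ — via Proposition \ref{pr.1}, Lemma \ref{le.1}(1) and Proposition \ref{pr.3}(4) — and then invoking the clause of $\tau$ that identifies all elements of $R(W)$.
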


\begin{defn}\label{}
Let $\textbf{W}$ be a quasi-Wajsberg* algebra. Then $\textbf{W}$ is called \emph{flat} if the equation $0=1$ is satisfied.
\end{defn}

\begin{prop}\label{p0.3}
Let $\mathbf{W}$ be a quasi-Wajsberg* algebra. Then $\mathbf{W}/\tau=\langle W/\tau; \to, \neg, 1/\tau \rangle$ is a flat quasi-Wajsberg* algebra.
\end{prop}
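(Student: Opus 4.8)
The plan is to prove this in two stages: first extract the quasi-Wajsberg* structure of $\mathbf{W}/\tau$ essentially for free from the congruence machinery, and then verify the single extra equation that flatness demands.

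For the first stage, Proposition \ref{p0.1} already establishes that $\tau$ is a congruence on $\mathbf{W}$, and the paragraph preceding Proposition \ref{p0.2} records that the quotient of a quasi-Wajsberg* algebra by any congruence, equipped with the induced operations $\neg_{\scriptscriptstyle W/\tau}(x/\tau)=(\neg x)/\tau$, $(x/\tau)\to_{\scriptscriptstyle W/\tau}(y/\tau)=(x\to y)/\tau$, $(x/\tau)^{+}=x^{+}/\tau$ and $(x/\tau)^{-}=x^{-}/\tau$, is again a quasi-Wajsberg* algebra. Hence $\mathbf{W}/\tau$ is a quasi-Wajsberg* algebra and axioms (QW*1)--(QW*12) need no separate checking, since they are inherited from $\mathbf{W}$.

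For the second stage I would isolate flatness as the only remaining content. Recall that the derived constant is $0=1\to 1$, so in $\mathbf{W}/\tau$ the corresponding constant is $0_{\scriptscriptstyle W/\tau}=(1/\tau)\to(1/\tau)=0/\tau$, whereas the designated constant is $1/\tau$; thus flatness is exactly the assertion $0/\tau=1/\tau$. To obtain it I would invoke the facts already recorded before Proposition \ref{p0.1}: one has $1\in R(W)$, because $0\to 1=1$ by Proposition \ref{pr.1}(1), and every element of the form $x\to y$ lies in $R(W)$ by Lemma \ref{le.1}(1); in particular $0=1\to 1$ is an arrow term, so $0\in R(W)$. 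Since both $0$ and $1$ belong to $R(W)$, the definition of $\tau$ gives $\langle 0,1\rangle\in\tau$, that is $0/\tau=1/\tau$. This is precisely $0=1$ in $\mathbf{W}/\tau$, so $\mathbf{W}/\tau$ is flat.

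There is no genuine obstacle once the congruence quotient is in hand: the entire substance reduces to the collapse $0/\tau=1/\tau$, which is immediate from $0,1\in R(W)$. The only point requiring care is the bookkeeping of the derived constant, namely reading off that the $0$ occurring in the definition of flatness is the term $1\to 1$ (equivalently $x\to x$) and then recognizing it as an arrow term so that Lemma \ref{le.1}(1) places it in $R(W)$. As a consistency check, Lemma \ref{l0.6} already shows that in $\mathbf{W}/\tau$ the implication is constantly $0/\tau$ and that $\neg$ fixes the class $1/\tau=0/\tau$, which is exactly what one expects of a flat algebra.
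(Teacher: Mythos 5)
Your proof is correct and follows essentially the same route as the paper: the paper likewise reduces everything to the observation that $0,1\in R(W)$ (established in the paragraph preceding Proposition \ref{p0.1} via Proposition \ref{pr.1}(1) and Lemma \ref{le.1}(1)), so that $\langle 0,1\rangle\in\tau$ and hence $0/\tau=1/\tau$. Your additional bookkeeping about the derived constant and the inherited quasi-Wajsberg* structure is sound but adds nothing beyond what the paper already records.
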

\begin{proof}
Since $0, 1\in R(W)$, we have $\langle0,1\rangle \in \tau$ and then $0/\tau=1/\tau$, so $\textbf{W}/\tau$ is flat.
\end{proof}

\begin{prop}\label{CL7}
Let $\mathbf{W}$ be a quasi-Wajsberg* algebra. Then there exist a Wajsberg*-algebra $\mathbf{M}$ and a flat quasi-Wajsberg* algebra $\mathbf{F}$ such that $\mathbf{W}$ can be embedded into the direct product $\mathbf{M}\times\mathbf{F}$.
\end{prop}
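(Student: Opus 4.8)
The plan is to realize the embedding as the natural diagonal map into the product of the two quotient algebras already constructed, taking $\mathbf{M}=\mathbf{W}/\mu$ and $\mathbf{F}=\mathbf{W}/\tau$. By Proposition \ref{p0.2}, $\mathbf{W}/\mu$ is a Wajsberg*-algebra, and by Proposition \ref{p0.3}, $\mathbf{W}/\tau$ is a flat quasi-Wajsberg* algebra, so these furnish exactly the two factors demanded by the statement. I would regard $\mathbf{M}$ as a quasi-Wajsberg* algebra whose operations $^{+}$ and $^{-}$ are the terms $(x\to 1)\to 1$ and $(x\to \neg 1)\to \neg 1$; since $0\to z=z$ holds in $\mathbf{W}/\mu$ (this is what the proof of Proposition \ref{p0.2} checks), these terms agree with the inherited quotient operations $x^{+}/\mu$ and $x^{-}/\mu$. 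With this reading, $\mathbf{M}\times\mathbf{F}$ is again a quasi-Wajsberg* algebra.

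Next I would introduce the map $\varphi\colon W\to (W/\mu)\times(W/\tau)$ defined by $\varphi(x)=\langle x/\mu,\,x/\tau\rangle$. Because $\mu$ and $\tau$ are congruences on $\mathbf{W}$ by Proposition \ref{p0.1}, the two canonical projections $x\mapsto x/\mu$ and $x\mapsto x/\tau$ are homomorphisms; and since a map into a direct product is a homomorphism precisely when each of its coordinate maps is, it follows at once that $\varphi$ preserves $\to$, $\neg$, $^{+}$, $^{-}$ and the constant $1$. Hence $\varphi$ is a homomorphism of quasi-Wajsberg* algebras.

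It then remains to verify injectivity. Suppose $\varphi(x)=\varphi(y)$; then $x/\mu=y/\mu$ and $x/\tau=y/\tau$, that is $\langle x,y\rangle\in\mu$ and $\langle x,y\rangle\in\tau$, so $\langle x,y\rangle\in\mu\cap\tau$. By Proposition \ref{l0.5}(2) we have $\mu\cap\tau=\Delta$, whence $x=y$. Thus $\varphi$ is an injective homomorphism, i.e. an embedding of $\mathbf{W}$ into $\mathbf{M}\times\mathbf{F}$, which is what is required.

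The homomorphism check is entirely routine once the congruence properties from Proposition \ref{p0.1} are in hand, and injectivity is a one-line consequence of $\mu\cap\tau=\Delta$. The one point I expect to require genuine care, and which I would treat as the main obstacle, is the signature bookkeeping: one must explain why $\mathbf{W}/\mu$, presented as a Wajsberg*-algebra in the reduced type $\langle\to,\neg,1\rangle$, may legitimately be viewed as a quasi-Wajsberg* algebra so that the product $\mathbf{M}\times\mathbf{F}$ and the map $\varphi$ are well-defined on the $^{+}$ and $^{-}$ operations. This is precisely where the identity $0\to z=z$ in $\mathbf{W}/\mu$ is invoked, since it forces the term-defined and the inherited unary operations to coincide.
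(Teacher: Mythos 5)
Your proposal is correct and follows essentially the same route as the paper's own proof: take $\mathbf{M}=\mathbf{W}/\mu$ and $\mathbf{F}=\mathbf{W}/\tau$, use Propositions \ref{p0.2} and \ref{p0.3} for the two factors, embed via the diagonal map $x\mapsto\langle x/\mu,x/\tau\rangle$, and derive injectivity from $\mu\cap\tau=\Delta$ (Proposition \ref{l0.5}(2)). Your additional attention to the signature bookkeeping for $\mathbf{W}/\mu$ and to the homomorphism check is a welcome refinement of details the paper leaves implicit, but it does not change the argument.
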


\begin{proof}
Define $\textbf{M}=\textbf{W}/\mu$ and $\textbf{F}=\textbf{W}/\tau$. Then we have that $\textbf{M}$ is a Wajsberg*-algebra by Proposition \ref{p0.2} and $\textbf{F}$ is a flat quasi-Wajsberg* algebra by Proposition \ref{p0.3}. Now we prove that the mapping $h:\textbf{W}\to\textbf{M}\times\textbf{F}$ is injective. For any $x,y\in W$, if $h(x)=h(y)$, we have $\langle x/\mu,x/\tau\rangle =\langle y/\mu,y/\tau\rangle $, it follows that $x/\mu=y/\mu$ and $x/\tau=y/\tau$, so $\langle x,y\rangle\in\mu$ and $\langle x,y\rangle\in\tau$. From Proposition \ref{l0.5}(2), we have $x=y$. Thus $f$ is an embedding mapping.
\end{proof}

\begin{defn}\label{}
Let $\textbf{W}$ be a quasi-Wajsberg* algebra. Then $\textbf{W}$ is called \emph{linear} if for any $x,y\in W$, either $x\le y$ or $y\le x$.
\end{defn}

\begin{coro}\label{CL8}
Let $\mathbf{W}$ be a quasi-Wajsberg* algebra. Then $\mathbf{W}/\tau=\langle W/\tau; \to_{W/\tau}, \neg_{W/\tau}, 1/\tau \rangle$ is a linear quasi-Wajsberg* algebra.

\begin{proof}
From Lemma \ref{l0.6}(1) and Proposition \ref{p0.3}, we have $(\neg1)/\tau = 0/\tau=1/\tau$. For any $x/\tau, y/\tau\in W/\tau$, we have $0/\tau \leq x/\tau \leq 0/\tau$ and $0/\tau \leq y/\tau \leq 0/\tau$ from Corollary \ref{l0.4}(2). It turns out that $x/\tau \leq 0/\tau \leq y/\tau$ and $y/\tau \leq 0/\tau \leq x/\tau$, i.e., $x/\tau \leq y/\tau$ and $y/\tau \leq x/\tau$. Hence $\textbf{W}/\tau$ is linear.
\end{proof}
\end{coro}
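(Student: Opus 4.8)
The plan is to show that in $\mathbf{W}/\tau$ the three distinguished elements $(\neg 1)/\tau$, $0/\tau$ and $1/\tau$ all coincide, and then to exploit the universal bound $\neg 1 \le x \le 1$ which holds in every quasi-Wajsberg* algebra. Once those bounds degenerate, the order becomes trivial and linearity is immediate.

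First I would establish the collapse of the distinguished elements. Since $\mathbf{W}/\tau$ is itself a quasi-Wajsberg* algebra, Proposition \ref{p0.3} gives flatness, i.e.\ $0/\tau = 1/\tau$. On the other hand, Lemma \ref{l0.6}(1) yields $\neg(1/\tau) = 1/\tau$, and by the definition of the negation on the quotient we have $\neg(1/\tau) = (\neg 1)/\tau$; hence $(\neg 1)/\tau = 1/\tau$. Combining these two facts, $(\neg 1)/\tau = 0/\tau = 1/\tau$.

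Next, because $\mathbf{W}/\tau$ is a genuine quasi-Wajsberg* algebra, every general order fact remains available to it; in particular Corollary \ref{l0.4}(2) applies and gives $(\neg 1)/\tau \le x/\tau \le 1/\tau$ for every $x/\tau \in W/\tau$. Substituting the collapse from the previous step, this becomes $0/\tau \le x/\tau \le 0/\tau$ for all $x$, so that every element of $W/\tau$ lies both above and below $0/\tau$. Given arbitrary $x/\tau, y/\tau$, we then have $x/\tau \le 0/\tau$ and $0/\tau \le y/\tau$, and transitivity of $\le$ yields $x/\tau \le y/\tau$; symmetrically $y/\tau \le x/\tau$. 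In particular at least one of the two comparisons holds, which is exactly linearity of $\mathbf{W}/\tau$.

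The only points requiring care are the transfer of Corollary \ref{l0.4}(2) to the quotient and the use of transitivity of $\le$ to chain through $0/\tau$; both are routine once one observes that $\mathbf{W}/\tau$ is a bona fide quasi-Wajsberg* algebra. The conceptual heart of the argument — and the only genuinely new input — is the degeneration $(\neg 1)/\tau = 0/\tau = 1/\tau$, which forces the top and bottom bounds to coincide and thereby trivializes the order entirely, making linearity essentially automatic.
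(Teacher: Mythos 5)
Your proposal is correct and follows essentially the same route as the paper: both collapse $(\neg 1)/\tau = 0/\tau = 1/\tau$ via Lemma \ref{l0.6}(1) and Proposition \ref{p0.3}, then apply Corollary \ref{l0.4}(2) in the quotient to squeeze every element between $0/\tau$ and $0/\tau$, and conclude linearity by chaining the two inequalities through $0/\tau$. Your explicit remarks about transferring Corollary \ref{l0.4}(2) to the quotient and about the use of transitivity are points the paper passes over silently, but they do not change the argument.
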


In \cite{c2}, author proved that any MV*-algebra can be embedded into the direct product of linear MV*-algebras. Based on the term equivalence of MV*-algebras, we have the following result.

\begin{lem}\label{l1111}
Any Wajsberg*-algebra is representable as a subdirect product of linear Wajsberg*-algebras.
\end{lem}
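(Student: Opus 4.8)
The plan is to push Chang's representation theorem for MV*-algebras through the term equivalence of Corollary~\ref{cor}. Let $\mathbf{M}$ be a Wajsberg*-algebra. By Corollary~\ref{cor}, $g(\mathbf{M})$ is an MV*-algebra and $fg(\mathbf{M})=\mathbf{M}$, with $f$ and $g$ mutually inverse. By the result of \cite{c2} recalled above, $g(\mathbf{M})$ embeds into a direct product $\prod_{i\in I}\mathbf{N}_i$ of linear MV*-algebras. Replacing each $\mathbf{N}_i$ by the image of $g(\mathbf{M})$ under the $i$-th projection, I may assume this embedding is a subdirect representation; since every subalgebra of a chain is again a chain, each factor remains a linear MV*-algebra.

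Next I would apply $f$ factorwise. Because $f$ and $g$ are given by term operations, they commute with the formation of direct products and subalgebras; hence $f\bigl(\prod_{i}\mathbf{N}_i\bigr)=\prod_{i}f(\mathbf{N}_i)$, and the subdirect embedding of $g(\mathbf{M})$ transports to a subdirect embedding of $\mathbf{M}=fg(\mathbf{M})$ into $\prod_{i}f(\mathbf{N}_i)$. It remains only to see that each $f(\mathbf{N}_i)$ is a linear Wajsberg*-algebra, and this is where the one substantive point lies: the order $\le$ is defined from the derived operation $\vee$ (via $x\le y$ iff $x\vee y=0\to y$, which in the non-quasi setting reads $x\vee y=y$), and the $\vee$-computations inside Propositions~\ref{p3.2} and \ref{p3.3} show that $\vee$ is the very same term operation on both sides of the equivalence. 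Consequently $f$ and $g$ preserve $\le$, so linearity is an invariant of the term equivalence and each $f(\mathbf{N}_i)$ is linear.

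I expect the main obstacle to be little more than bookkeeping: verifying that \emph{linear} transfers across $f$ and $g$, i.e.\ that the term-definable order is preserved. Once the $\vee$-preservation coming from Propositions~\ref{p3.2} and \ref{p3.3} is in hand, the compatibility of the term equivalence with products and subalgebras is formal, and the subdirect representation of $\mathbf{M}$ by the linear factors $f(\mathbf{N}_i)$ follows at once.
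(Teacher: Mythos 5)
Your argument is correct and is precisely the route the paper intends: the paper states Lemma~\ref{l1111} without proof, asserting only that it follows from Chang's embedding theorem for MV*-algebras together with the term equivalence of Corollary~\ref{cor}. Your writeup supplies the details the paper omits --- in particular the observation that the order is term-definable via the common $\vee$ operation (so linearity transfers across $f$ and $g$) and that the term equivalence is compatible with products and subalgebras --- and these details are all sound.
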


\begin{prop}\label{CL9}
Every quasi-Wajsberg* algebra is representable as a subdirect product of linear quasi-Wajsberg* algebras.
\end{prop}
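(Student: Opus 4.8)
The plan is to realize the representation at the level of congruences, so that we obtain a genuine \emph{subdirect} product rather than a mere embedding into a direct product. Recall that the content of Proposition \ref{CL7} is that $\mu$ and $\tau$ are congruences on $\mathbf{W}$ with $\mathbf{W}/\mu=\mathbf{M}$ a Wajsberg*-algebra, $\mathbf{W}/\tau=\mathbf{F}$ a flat quasi-Wajsberg* algebra, and $\mu\cap\tau=\Delta$ by Proposition \ref{l0.5}(2). By Corollary \ref{CL8} the quotient $\mathbf{W}/\tau$ is already linear, so $\tau$ contributes a single linear factor and needs no further refinement. To handle the Wajsberg*-part, I would invoke Lemma \ref{l1111}: $\mathbf{M}=\mathbf{W}/\mu$ is a subdirect product of a family $\{\mathbf{M}_i\}_{i\in I}$ of \emph{linear} Wajsberg*-algebras, i.e. there are congruences $\{\psi_i\}_{i\in I}$ on $\mathbf{M}$ with $\bigcap_{i\in I}\psi_i=\Delta_{\mathbf{M}}$ and $\mathbf{M}/\psi_i\cong\mathbf{M}_i$. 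Since a Wajsberg*-algebra is precisely a quasi-Wajsberg* algebra satisfying the extra identity $0\to x=x$ (compare (M4) with (QW*4)), and since the relation $\le$ is defined by the same term condition in both settings, each $\mathbf{M}_i$ is in particular a linear quasi-Wajsberg* algebra.

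Next I would lift these congruences back to $\mathbf{W}$. By the correspondence theorem for congruence lattices, each $\psi_i$ on $\mathbf{W}/\mu$ pulls back to a congruence $\Psi_i$ on $\mathbf{W}$ with $\mu\subseteq\Psi_i$ and $\mathbf{W}/\Psi_i\cong(\mathbf{W}/\mu)/\psi_i\cong\mathbf{M}_i$, and moreover $\bigcap_{i\in I}\Psi_i=\mu$, since intersecting the $\Psi_i$ corresponds exactly to intersecting the $\psi_i$ down to $\Delta_{\mathbf{M}}$. Now consider the enlarged family $\{\Psi_i:i\in I\}\cup\{\tau\}$ of congruences on $\mathbf{W}$. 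Its intersection is $\bigl(\bigcap_{i\in I}\Psi_i\bigr)\cap\tau=\mu\cap\tau=\Delta$. Each quotient $\mathbf{W}/\Psi_i\cong\mathbf{M}_i$ is a linear quasi-Wajsberg* algebra, and $\mathbf{W}/\tau=\mathbf{F}$ is a linear quasi-Wajsberg* algebra, so by Birkhoff's subdirect representation theorem (which applies because the family of congruences has trivial intersection) $\mathbf{W}$ is a subdirect product of the linear quasi-Wajsberg* algebras $\{\mathbf{W}/\Psi_i\}_{i\in I}\cup\{\mathbf{W}/\tau\}$, which is the desired conclusion.

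The one genuinely load-bearing step is the combination $\bigl(\bigcap_{i\in I}\Psi_i\bigr)\cap\tau=\Delta$: the separation of points rests entirely on the lifting identity $\bigcap_{i\in I}\Psi_i=\mu$, coming from the congruence-lattice correspondence over $\mu$, together with $\mu\cap\tau=\Delta$ from Proposition \ref{l0.5}(2). Everything else is bookkeeping; in particular, the claim that linearity of the factors does not depend on whether they are viewed as Wajsberg*-algebras or as quasi-Wajsberg* algebras is immediate, since $x\le y$ is defined by the term condition $x\vee y=0\to y$ and is therefore the same relation in either signature.
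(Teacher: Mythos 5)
Your proof is correct, but it takes a genuinely different route from the paper's. The paper works with the embedding $\mathbf{W}\hookrightarrow\mathbf{M}\times\mathbf{F}$ of Proposition \ref{CL7}, expands $\mathbf{M}$ via Lemma \ref{l1111} into $\prod_{i\in I}\mathbf{M}_i$, and then uses the isomorphism $\bigl(\prod_{i\in I}\mathbf{M}_i\bigr)\times\mathbf{F}\cong\prod_{i\in I}(\mathbf{M}_i\times\mathbf{F})$ to land in a product whose factors are $\mathbf{M}_i\times\mathbf{F}$; the key extra observation there is that each $\mathbf{M}_i\times\mathbf{F}$ is itself linear, which holds only because $\mathbf{F}$ is flat (so its order is degenerate and comparability in the product is decided entirely in the linear factor $\mathbf{M}_i$) --- a product of two genuinely linear algebras would of course not be linear. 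You avoid fusing the factors: by lifting the congruences $\psi_i$ witnessing Lemma \ref{l1111} to congruences $\Psi_i\supseteq\mu$ on $\mathbf{W}$ with $\bigcap_{i\in I}\Psi_i=\mu$ and adjoining $\tau$, you get a family with intersection $\mu\cap\tau=\Delta$ whose quotients are the $\mathbf{M}_i$ and $\mathbf{F}$ separately. What your version buys is that the projections onto the factors are automatically surjective, so you obtain a subdirect representation literally as stated, whereas the paper's argument produces an embedding into a direct product and leaves implicit the (routine) passage to the images of the projections. Two small points worth making explicit in your write-up: first, the congruences $\psi_i$ are a priori congruences for the Wajsberg* signature $\langle\to,\neg,1\rangle$, and one should note that they automatically respect $^{+}$ and $^{-}$ because on $\mathbf{W}/\mu$ these are term operations, namely $x^{+}=(x\to 1)\to 1$ and $x^{-}=(x\to\neg 1)\to\neg 1$ (this is where $0\to x=x$ enters); second, the fact you invoke is not Birkhoff's subdirect representation theorem proper but the elementary lemma that a family of congruences with trivial intersection yields a subdirect embedding $x\mapsto(x/\theta_j)_j$ --- the fact itself is exactly what you need.
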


\begin{proof} Let $\textbf{W}$ be a quasi-Wajsberg* algebra. Then there exist a Wajsberg*-algebra $\textbf{M}$ and a flat quasi-Wajsberg* algebra $\textbf{F}$
such that $\textbf{W}$ can be embedded into $\textbf{M}\times\textbf{F}$ by Proposition \ref{CL7}. According to Lemma \ref{l1111}, we have that $\textbf{M}$ can be embedded into a direct product of $\{\textbf{M}_i\}_{i\in I}$ which are linear Wajsberg*-algebras.  Since there is an isomorphism
$$(\prod_{i\in I}\textbf{M}_i)\times\textbf{F}\to \prod_{i\in I}(\textbf{M}_i\times\textbf{F}),$$ where
$$(x_1, x_2, \cdots, x_n,y )\mapsto((x_1, y),(x_2, y), \cdots, (x_n, y)),$$
we have that $\textbf{W}$ is embedded into $\prod_{i\in I}(\textbf{M}_i\times\textbf{F})$. In addition, since any Wajsberg*-algebra is a quasi-Wajsberg* algebra, we get that $\textbf{M}_i$ is linear quasi-Wajsberg* algebra. Meanwhile, $\textbf{F}$ is also a linear quasi-Wajsberg* algebra by Corollary \ref{CL8}. It follows that $\textbf{M}_i\times\textbf{F}$ is a linear quasi-Wajsberg* algebra. Hence $\textbf{W}$ can be embedded into the direct product of linear quasi-Wajsberg* algebras.
\end{proof}

\section{The Logic associated with quasi-MV* algebras}\label{Sec-L}
In this section, we study the associated logical system of quasi-MV* algebras and call it $\text{q}\L^{*}$.

Denote $V=\{p_1, p_2, \ldots \}$ a set of all propositional variables and $F(V)$ the formulas set generated by $V$ with logical connectives $\to$, $\neg$, $^{+}$, $^{-}$ and a constant $1$. Then $\langle F(V); \to, \neg, ^{+}, ^{-}, 1 \rangle$ is a free algebra.
For any $p, q\in F(V)$, the notation $p\leftrightarrow q$ stands for $p\to q$ and $q\to p$.

The axioms of $\text{q}\L^{*}$ are defined as follows.

\noindent\textbf{Axioms schemas}

%(The axioms of $\text{q}\L^{*}$ are defined as follows.)

(Q1) $(p\to q)\leftrightarrow (\neg q\to \neg p)$,

(Q2) $1\leftrightarrow ((1\to p)\to 1)$,

(Q3) $p\leftrightarrow ((q\to q)\to p)$,

(Q4) $(p\to q)\leftrightarrow((q^{+}\to p^{-})\to (p^{+}\to q^{-}))$,

(Q5) $\neg(p\to q)\leftrightarrow (q\to p)$,

(Q6) $(p\to (\neg p\to q))^+\leftrightarrow (p^+\to (\neg p^+\to q^+))$,

(Q7) $(p\to (q\vee r))\leftrightarrow((p\to r)\vee (p\to q))$ where $p\vee q = ((p^{+}\to q^{+})^{+})\to(\neg p)^{-} )\to((q^{-}\to p^{-})^{-}\to p^{-})$,

(Q8) $(p\vee (q\vee r))\leftrightarrow ((p\vee q)\vee r)$,

(Q9) $((p\to 1)\to((q\to 1)\to r))\to ((q\to 1)\to((p\to 1)\to r))$,

(Q10) $p\to 1$,

(Q11) $(1\to 1)\to p^{+}\leftrightarrow(p\to 1)\to 1$ and $(1\to 1)\to p^{-}\leftrightarrow(p\to \neg 1)\to \neg 1$.

The deduction rules of $\text{q}\L^{*}$ are as follows.

\noindent\textbf{Rules of deduction}

(R1) $p, p\to q\vdash_{\tiny\text{q}\L^{*}} ((r\to r)\to q)$

(R2) $((r\to r)\to(p\to q))\vdash_{\tiny\text{q}\L^{*}} (p\to q)$,

(R3) $p\to q, r\to t\vdash_{\tiny\text{q}\L^{*}} ((q\to r)\to (p\to t))$.

\begin{defn}
Let $F(V)$ be the formulas set of $\text{q}\L^{*}$. If $q_1,q_2,\ldots,q_n$ ($n\ge 1$) is a sequence of formulas such that one of the following cases holds:

(1) $q_i(i\le n)$ is an axiom,

(2) $q_i(i\le n)$ is obtained by $q_j$ and $q_k$ for some $j,k<i$ with the \emph{Rules of deduction},

\noindent then the sequence $q_1,q_2,\ldots,q_n$ is called a \emph{proof} of $q_n$ and denoted by $\vdash_{\tiny\text{q}\L^{*}} q_n$, in which $q_n$ is called a \emph{theorem}. The set of theorems in q$\L^{*}$ is denoted by \emph{Theor}$_{\tiny\text{q}\L^{*}}$.
\end{defn}

\begin{defn}
Let $\Gamma\subseteq F(V)$ be a subset of formulas of $\text{q}\L^{*}$. If $q_1,q_2,\ldots,q_n$ ($n\ge 1$) is a sequence of formulas such that one of the following cases holds:

(1) $q_i(i\le n)$ is an axiom,

(2) $q_i\in \Gamma(i\le n)$,

(3) $q_i(i\le n)$ is obtained by $q_j$ and $q_k$ for some $j,k<i$ with the \emph{Rules of deduction},

\noindent then the sequence $q_1,q_2,\ldots,q_n$ is called a \emph{proof from $\Gamma$ to $q_n$} and denoted by $\Gamma\vdash_{\tiny\text{q}\L^{*}} q_n$. Moreover,  $q_n$ is called the probable formula from $\Gamma$.
\end{defn}

For convenience, we abbreviate the symbol $\vdash_{\tiny\text{q}\L^{*}}$ as $\vdash$ in the following. Besides, the set of all probable formulas from $\Gamma$ is denoted by ${\Gamma}^{\vdash}$.

\begin{defn}
Let $\Theta \subseteq F(V)$ be a subset of formulas of $\text{q}\L^{*}$. If $\Theta$ is closed under the rules of deduction (i.e., ${\Theta}^{\vdash}=\Theta$), then $\Theta$ is called a q$\L^{*}$-\emph{theory}.
\end{defn}

%\textbf{Proposities of the axiom system}
\begin{prop}\label{QL1}
Let $F(V)$ be the formulas set of q$\L^{*}$. Then the following hold for any $p, q, r, t\in F(V)$,

\emph{(1)} If $\vdash p\leftrightarrow q$, then $\vdash \neg p\leftrightarrow \neg q$,

\emph{(2)} If $\vdash p\leftrightarrow q$ and $\vdash r\leftrightarrow t$, then $\vdash (p\to r)\leftrightarrow(q\to t)$,

\emph{(3)} If $\vdash p\leftrightarrow q$ and $\vdash q\leftrightarrow r$, then $\vdash p\leftrightarrow r$,

\emph{(4)} $\vdash \neg (p\to q)\leftrightarrow (\neg p\to \neg q)$,

\emph{(5)} $\vdash p\leftrightarrow p$,

\emph{(6)} If $\vdash p_1\leftrightarrow r_1$, then $\vdash p\leftrightarrow r$, where $p_1$ is a subformula of $p$ and $r$ is obtained by replacing $p_1$ in $p$ with $r_1$,

\emph{(7)} $\vdash (p\to p)\leftrightarrow (q\to q)$,

\emph{(8)} $\vdash \neg\neg(p\to p)\leftrightarrow (p\to p)$,

\emph{(9)} $\vdash p\leftrightarrow \neg\neg p$,

\emph{(10)} $\vdash (\neg p\to q)\leftrightarrow (\neg q\to p)$,

\emph{(11)} $\vdash (\neg p)^{+}\leftrightarrow \neg p^{-}$ and $\vdash (\neg p)^{-}\leftrightarrow \neg p^{+}$,

\emph{(12)} If $\vdash p\leftrightarrow q$, then $\vdash p^+\leftrightarrow q^+$ and $\vdash p^-\leftrightarrow q^-$.
\end{prop}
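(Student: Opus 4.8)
The plan is to first extract a small set of \emph{derived} inference rules from the unusual deduction rules (R1)--(R3), and then obtain the remaining items by reproducing, at the syntactic level, the algebraic computations of Section~\ref{Sec-qw}. Throughout, recall that $\vdash p\leftrightarrow q$ is only shorthand for the two derivations $\vdash p\to q$ and $\vdash q\to p$, so symmetry of $\leftrightarrow$ is free, whereas reflexivity and transitivity must be earned.

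First I would assemble the toolkit. Reading (R3) with its second premise taken to be $b\to c$ (i.e.\ $r:=b$) produces $(b\to b)\to(a\to c)$ from $\vdash a\to b$ and $\vdash b\to c$; since the conclusion $a\to c$ is an implication, (R2) strips the prefix $(b\to b)$ and yields the \emph{hypothetical syllogism} $\vdash a\to b,\ \vdash b\to c\Rightarrow\vdash a\to c$, which is exactly item~(3). Similarly, applying (R1) to $\vdash a$ and $\vdash a\to b$ gives $(r\to r)\to b$, and when $b$ is an implication (R2) removes the prefix, yielding a \emph{restricted modus ponens}. Finally, reflexivity $\vdash p\to p$ follows by hypothetical syllogism from the two halves $\vdash p\to((q\to q)\to p)$ and $\vdash((q\to q)\to p)\to p$ of (Q3); this is item~(5), and (7) then follows since (Q3) makes every $p\to p$ equivalent to $q\to q$, the redundant prefix being deleted by restricted modus ponens. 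The care needed here is that (R1) always inserts a spurious antecedent $(r\to r)$ while (R2) deletes it only when the conclusion is an implication, so each intermediate formula must be kept in implicational form or first converted to one via (Q3).

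With the toolkit in place the congruence facts come quickly. Item~(2) is immediate from (R3): feeding it $q\to p$ and $r\to t$ yields $(p\to r)\to(q\to t)$, and the reversed pair $p\to q,\ t\to r$ gives the converse. Item~(1) follows from (Q1) together with restricted modus ponens. For (12) I would first derive the auxiliary equivalences $\vdash p^{+}\leftrightarrow((p\to1)\to1)$ and $\vdash p^{-}\leftrightarrow((p\to\neg1)\to\neg1)$ by combining (Q11) with the instance $a\leftrightarrow((1\to1)\to a)$ of (Q3) and transitivity; then $\vdash p\leftrightarrow q$ propagates through the $\to$-congruence (2) and transitivity to give $\vdash p^{+}\leftrightarrow q^{+}$ and $\vdash p^{-}\leftrightarrow q^{-}$. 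The subformula replacement (6) is then a routine structural induction on $p$, the inductive steps being precisely (1), (2), and (12).

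The remaining identities mirror their algebraic counterparts. Item~(4) chains (Q5) with (Q1); item~(8) applies (1) to the instance $\neg(p\to p)\leftrightarrow(p\to p)$ of (Q5) and uses transitivity; and for (9) I would note that (Q3) makes $p$ equivalent to the implication $A:=(q\to q)\to p$, prove $\vdash A\leftrightarrow\neg\neg A$ for implications $A$ by two applications of (Q5) sandwiched with (1), and transport back through (1) and transitivity. Item~(10) combines (Q1) with (9) via the $\to$-congruence, and (11) is the most computational: using the auxiliary equivalences from the proof of (12), together with (Q5)/(4), (9), and (10), one rewrites $(\neg p)^{+}$ as $(\neg p\to1)\to1$, turns this into $\neg((p\to\neg1)\to\neg1)$, and identifies the latter with $\neg p^{-}$, the dual computation giving $(\neg p)^{-}\leftrightarrow\neg p^{+}$. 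I expect the genuine obstacle to be the very first step, namely reliably simulating modus ponens and transitivity in the presence of the prefix-inserting rule (R1) and the prefix-deleting but implication-restricted rule (R2); once that is secured, everything reduces to bookkeeping over the axioms and the identities already proved for quasi-Wajsberg* algebras.
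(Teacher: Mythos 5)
Your proposal is correct and follows essentially the same route as the paper's proof: derive transitivity, reflexivity, and a restricted modus ponens from (R1)--(R3) together with (Q3), establish the congruence items (1), (2), (12) and the replacement lemma (6) by structural induction, and then obtain the remaining items by chaining axiom instances. Your few deviations --- applying (R2) directly to the output of (R3) instead of detouring through (Q3) and (R1), proving (12) before (6) with a symmetric (Q11)-based treatment of $p^{-}$, and proving $A\leftrightarrow\neg\neg A$ for implications via two uses of (Q5) --- are harmless streamlinings of the same argument.
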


\begin{proof}
\begin{enumerate}
\renewcommand{\labelenumi}{(\theenumi)}
\item If $\vdash p\leftrightarrow q$, then
\begin{flalign*}
  1^{\circ} &\ p \to q&(\text{Hypothesis})\\
  2^{\circ} &\ (p\to q)\to (\neg q\to \neg p)& (\text{Q}1)\\
  3^{\circ} &\ (1\to 1)\to (\neg q\to \neg p) & 1^{\circ}, 2^{\circ}, (\text{R}1)\\
  4^{\circ} &\ \neg q\to \neg p & 3^{\circ},(\text{R}2)
\end{flalign*}
Similarly,
\begin{flalign*}
  1^{\circ} & \ q \to p&(\text{Hypothesis})\\
  2^{\circ} & \ (q\to p)\to (\neg p\to \neg q)& (\text{Q}1)\\
  3^{\circ} & \ (1\to 1)\to (\neg p\to \neg q) & 1^{\circ}, 2^{\circ}, (\text{R}1)\\
  4^{\circ} & \ \neg p\to \neg q & 3^{\circ},(\text{R}2)
\end{flalign*}
So $\vdash \neg p\leftrightarrow \neg q$.

\item If $\vdash p\leftrightarrow q$ and $\vdash r\leftrightarrow t$, then
\begin{flalign*}
  1^{\circ} & \ p\to q &(\text{Hypothesis})\\
  2^{\circ} & \ t\to r &(\text{Hypothesis})\\
  3^{\circ} & \ (q\to t)\to (p\to r)& 1^{\circ}, 2^{\circ}, (\text{R}3)
\end{flalign*}
Similarly,
\begin{flalign*}
  1^{\circ} & \ q\to p& (\text{Hypothesis})\\
  2^{\circ} & \ r\to t& (\text{Hypothesis})\\
  3^{\circ} & \ (p\to r)\to(q\to t) & 1^{\circ}, 2^{\circ}, (\text{R}3)
\end{flalign*}
So $\vdash (p\to r)\leftrightarrow(q\to t)$.

\item If $\vdash p\leftrightarrow q$ and $\vdash q\leftrightarrow r$, then
\begin{flalign*}
  1^{\circ} & \ p\to q &(\text{Hypothesis})\\
  2^{\circ} & \ q\to r &(\text{Hypothesis})\\
  3^{\circ} & \ (q\to q)\to (p\to r)& 1^{\circ}, 2^{\circ}, (\text{R}3)\\
  4^{\circ} & \ ((q\to q)\to (p\to r))\to (p\to r) & (\text{Q}3)\\
  5^{\circ} & \ (1\to 1)\to (p\to r) & 3^{\circ}, 4^{\circ}, (\text{R}1)\\
  6^{\circ} & \ p\to r &5^{\circ}, (\text{R}2)
\end{flalign*}
As same as the former,
\begin{flalign*}
  1^{\circ} & \ r\to q &(\text{Hypothesis})\\
  2^{\circ} & \ q\to p&(\text{Hypothesis})\\
  3^{\circ} & \ (q\to q)\to (r\to p)& 1^{\circ}, 2^{\circ}, (\text{R}3)\\
  4^{\circ} & \ ((q\to q)\to (r\to p))\to (r\to p) & (\text{Q}3)\\
  5^{\circ} & \ (1\to 1)\to (r\to p) & 3^{\circ}, 4^{\circ}, (\text{R}1)\\
  6^{\circ} & \ r\to p &5^{\circ}, (\text{R}2)
\end{flalign*}
So $\vdash p\leftrightarrow r$.

\item We have
\begin{flalign*}
  1^{\circ} & \ \neg (p\to q)\to ( q\to p) & (\text{Q}5)\\
  2^{\circ} & \ (q\to p)\to (\neg p\to \neg q) & (\text{Q}1)\\
  3^{\circ} & \ \neg (p\to q)\to (\neg p\to \neg q) & 1^{\circ}, 2^{\circ}, (3)
\end{flalign*}
Similarly,
\begin{flalign*}
  1^{\circ} & \ (\neg p\to \neg q)\to ( q\to p) & (\text{Q}1)\\
  2^{\circ} & \ (q\to p)\to \neg(p\to q) & (\text{Q}5)\\
  3^{\circ} & \ (\neg p\to \neg q)\to \neg (p\to q) & 1^{\circ}, 2^{\circ}, (3)
\end{flalign*}
So $\vdash \neg (p\to q)\leftrightarrow (\neg p\to \neg q)$.

\item We have
\begin{flalign*}
  1^{\circ} & \ p\to ((q\to q)\to p) &(\text{Q}3)\\
  2^{\circ} & \ ((q\to q)\to p)\to p & (\text{Q}3)\\
  3^{\circ} & \ p\to p & 1^{\circ}, 2^{\circ}, (3)
\end{flalign*}
So $\vdash p\leftrightarrow p$.

\item Suppose that $\vdash p_1 \leftrightarrow r_1$. We use the induction for the number $n$ of connectives in $p$. If $n=0$, then $p=p_1$ and then $r=r_1$, so we have $\vdash p \leftrightarrow r$. If $n=1$, then $p_1$ is a propositional variable, we have

(a) If $p=\neg p_1$, then $r=\neg r_1$.
\begin{flalign*}
  1^{\circ} & \ p_1\leftrightarrow r_1 &(\text{Hypothesis})\\
  2^{\circ} & \ \neg p_1\leftrightarrow \neg r_1 & 1^{\circ}, (1)
\end{flalign*}
So $\vdash p \leftrightarrow r$.

(b) If $p={p_1}^{+}$, then $r={r_1}^{+}$.
\begin{flalign*}
  1^{\circ} & \ p_1\leftrightarrow r_1 & (\text{Hypothesis})\\
  2^{\circ} & \ 1\leftrightarrow 1 & (5)\\
  3^{\circ} & \ (p_1\to 1)\leftrightarrow (r_1\to 1) & 1^{\circ}, 2^{\circ}, (2)\\
  4^{\circ} & \ ((p_1\to 1)\to 1)\leftrightarrow ((r_1\to 1)\to 1) & 3^{\circ}, 2^{\circ}, (2)\\
  5^{\circ} & \ ((1\to 1)\to {p_1}^{+})\leftrightarrow ((p_1\to 1)\to 1) & (\text{Q}11)\\
  6^{\circ} & \ ((1\to 1)\to {p_1}^{+})\leftrightarrow ((r_1\to 1)\to 1) & 5^{\circ}, 4^{\circ}, (3)\\
  7^{\circ} & \ ((r_1\to 1)\to 1)\leftrightarrow ((1\to 1)\to {r_1}^{+}) & (\text{Q}11)\\
  8^{\circ} & \ ((1\to 1)\to {p_1}^{+})\leftrightarrow ((1\to 1)\to {r_1}^{+}) & 6^{\circ}, 7^{\circ}, (3)\\
  9^{\circ} & \ {p_1}^{+}\leftrightarrow ((1\to 1)\to {p_1}^{+}) & (\text{Q}3)\\
  10^{\circ} & \ {p_1}^{+}\leftrightarrow ((1\to 1)\to {r_1}^{+}) & 9^{\circ}, 8^{\circ}, (3)\\
  11^{\circ} & \ ((1\to 1)\to {r_1}^{+})\leftrightarrow {r_1}^{+}& (\text{Q}3)\\
  12^{\circ} & \ {p_1}^{+}\leftrightarrow {r_1}^{+}& 10^{\circ}, 11^{\circ}, (3)
\end{flalign*}
So we have $\vdash p\leftrightarrow r$.

(c) If $p={p_1}^{-}$, then $r={r_1}^{-}$.
\begin{flalign*}
  1^{\circ} & \ p_1\leftrightarrow r_1 & (\text{Hypothesis})\\
  2^{\circ} & \ \neg 1\leftrightarrow \neg 1 & (5)\\
  3^{\circ} & \ (p_1\to \neg 1)\leftrightarrow (r_1\to \neg 1) & 1^{\circ}, 2^{\circ}, (2)\\
  4^{\circ} & \ ((p_1\to \neg1)\to \neg1)\leftrightarrow ((r_1\to \neg 1)\to \neg 1) & 3^{\circ}, 2^{\circ}, (2)\\
  5^{\circ} & \ ((1\to 1)\to {p_1}^{-})\leftrightarrow ((p_1\to \neg 1)\to \neg 1) & (\text{Q}11)\\
  6^{\circ} & \ ((1\to 1)\to {p_1}^{-})\leftrightarrow ((r_1\to \neg 1)\to \neg 1) & 5^{\circ}, 4^{\circ}, (3)\\
  7^{\circ} & \ ((r_1\to \neg 1)\to \neg 1)\leftrightarrow ((1\to 1)\to {r_1}^{-}) & (\text{Q}11)\\
  8^{\circ} & \ ((1\to 1)\to {p_1}^{-})\leftrightarrow ((1\to 1)\to {r_1}^{-}) & 6^{\circ}, 7^{\circ}, (3)\\
  9^{\circ} & \ {p_1}^{-}\leftrightarrow ((1\to 1)\to {p_1}^{-}) & (\text{Q}3)\\
  10^{\circ} & \ {p_1}^{-}\leftrightarrow ((1\to 1)\to {r_1}^{-}) & 9^{\circ}, 8^{\circ}, (3)\\
  11^{\circ} & \ ((1\to 1)\to {r_1}^{-})\leftrightarrow {r_1}^{-}& (\text{Q}3)\\
  12^{\circ} & \ {p_1}^{-}\leftrightarrow {r_1}^{-}& 10^{\circ}, 11^{\circ}, (3)
\end{flalign*}
So we have $\vdash p\leftrightarrow r$.

(d) If $p=p_1\to t_1$, then $r=r_1\to t_1$, where $t_1$ is a propositional variable.
\begin{flalign*}
  1^{\circ} & \ p_1\leftrightarrow r_1 &(\text{Hypothesis})\\
  2^{\circ} & \ t_1\leftrightarrow t_1 & (5)\\
  3^{\circ} & \ (p_1\to t_1)\leftrightarrow (r_1\to t_1) & 1^{\circ}, 2^{\circ}, (2)
\end{flalign*}
So we have $\vdash p\leftrightarrow r$.

Assume that the result holds when $n\le k$. Now we verify that the following cases hold when $n=k+1$.

(e) If $p=\neg u$, then $p_1$ is a subformula of $u$ and we denote that $v$ is obtained by replacing $p_1$ with $r_1$ in $u$, so $r=\neg v$. Since the number of connectives in $u$ is not more than $k$ and $\vdash p_1\leftrightarrow r_1$, we have $\vdash u\leftrightarrow v$ by the inductive assumption. So we have $\vdash p\leftrightarrow r$.

(f) If $p={u}^{+}$, then $p_1$ is a subformula of $u$ and we denote that $v$ is obtained by replacing $p_1$ with $r_1$ in $u$, so $r={v}^{+}$. Since the number of connectives in $u$ is not more than $k$ and $\vdash p_1\leftrightarrow r_1$, we have $\vdash u\leftrightarrow v$ by the inductive assumption. So $\vdash p\leftrightarrow r$.

(g) If $p={u}^{-}$, then $p_1$ is a subformula of $u$ and we denote that $v$ is obtained by replacing $p_1$ with $r_1$ in $u$, so $r={v}^{-}$. Since the number of connectives in $u$ is not more than $k$ and $\vdash p_1\leftrightarrow r_1$, we have $\vdash u\leftrightarrow v$ by the inductive assumption. So $\vdash p\leftrightarrow r$.

(h) If $p=u\to q$ such that $p_1$ is a subformula of $u$ or $q$, then we denote that $v$ or $t$ is obtained by replacing $p_1$ with $r_1$ in $u$ or $q$, so $r=v\to q$ or $r=u\to t$. Since the number of connectives in $u$ or $q$ is not more than $k$ and $\vdash p_1\leftrightarrow r_1$, we have $\vdash u\leftrightarrow v$ or $\vdash q\leftrightarrow t$ by the inductive assumption. So
\begin{flalign*}
  1^{\circ} & \ u\leftrightarrow v &(\text{Hypothesis})\\
  2^{\circ} & \ q\leftrightarrow q & (5)\\
  3^{\circ} & \ (u\to q)\leftrightarrow (v\to q) & 1^{\circ}, 2^{\circ}, (2)
\end{flalign*}
or
\begin{flalign*}
  1^{\circ} & \ u\leftrightarrow u & (5)\\
  2^{\circ} & \ q\leftrightarrow t&(\text{Hypothesis})\\
  3^{\circ} & \ (u\to q)\leftrightarrow (u\to t) & 1^{\circ}, 2^{\circ}, (2)
\end{flalign*}
Hence $\vdash p\leftrightarrow r$.

\item We have
\begin{flalign*}
  1^{\circ} & \ p\to p &(5)\\
  2^{\circ} & \ (p\to p)\to ((q\to q)\to (p\to p)) & (\text{Q}3)\\
  3^{\circ} & \ (1\to 1)\to ((q\to q)\to (p\to p)) & 1^{\circ}, 2^{\circ}, (\text{R}1)\\
  4^{\circ} & \ (q\to q)\to (p\to p) & 3^{\circ}, (\text{R}2)
\end{flalign*}
Swapping $p$ and $q$, we get $\vdash (p\to p)\leftrightarrow (q\to q)$.

\item We have
\begin{flalign*}
  1^{\circ} & \ \neg(p\to p)\leftrightarrow (p\to p)& (8)\\
  2^{\circ} & \ \neg\neg(p\to p)\leftrightarrow \neg(p\to p) & 1^{\circ}, (1)\\
  3^{\circ} & \ \neg\neg(p\to p)\leftrightarrow (p\to p) & 2^{\circ}, 1^{\circ}, (3)
\end{flalign*}
So $\vdash \neg\neg(p\to p)\leftrightarrow (p\to p)$.
\item We have
\begin{flalign*}
  1^{\circ} & \ p\leftrightarrow ((q\to q)\to p)& (\text{Q}3)\\
  2^{\circ} & \ ((q\to q)\to p)\leftrightarrow (\neg p\to \neg(q\to q)) & (\text{Q}1)\\
  3^{\circ} & \ p\leftrightarrow (\neg p\to \neg(q\to q)) & 1^{\circ}, 2^{\circ}, (3)\\
  4^{\circ} & \ (\neg p\to \neg(q\to q))\leftrightarrow (\neg\neg(q\to q)\to \neg\neg p) & (\text{Q}3)\\
  5^{\circ} & \ p\leftrightarrow (\neg\neg(q\to q)\to \neg\neg p) & 3^{\circ}, 4^{\circ}, (3)\\
  6^{\circ} & \ \neg\neg(q\to q)\leftrightarrow (q\to q) & (9)\\
  7^{\circ} & \ p\leftrightarrow ((q\to q)\to \neg\neg p) & 5^{\circ}, 6^{\circ}, (6)\\
  8^{\circ} & \ ((q\to q)\to \neg\neg p)\leftrightarrow \neg\neg p & (\text{Q}3)\\
  9^{\circ} & \ p\leftrightarrow \neg\neg p & 7^{\circ}, 8^{\circ}, (3)
\end{flalign*}
So $\vdash p\leftrightarrow \neg\neg p$.

\item We have
\begin{flalign*}
  1^{\circ} & \ (\neg p\to q)\leftrightarrow (\neg q\to \neg\neg p) & (\text{Q}1)\\
  2^{\circ} & \ \neg\neg p\leftrightarrow p & (10)\\
  3^{\circ} & \ (\neg p\to q)\leftrightarrow (\neg q\to p) & 1^{\circ}, 2^{\circ}, (6)
\end{flalign*}
So $\vdash (\neg p\to q)\leftrightarrow (\neg q\to p)$.

\item We have
\begin{flalign*}
  1^{\circ} & \ ((1\to 1)\to (\neg p)^{+})\leftrightarrow((\neg p\to 1)\to 1) & (\text{Q}11)\\
  2^{\circ} & \ ((\neg p\to 1)\to 1)\leftrightarrow \neg(1\to (\neg p\to 1)) & (\text{Q}5)\\
  3^{\circ} & \ ((1\to 1)\to (\neg p)^{+})\leftrightarrow \neg(1\to (\neg p\to 1)) & 1^{\circ}, 2^{\circ}, (3)\\
  4^{\circ} & \ 1\leftrightarrow \neg\neg 1 & (10)\\
  5^{\circ} & \ ((1\to 1)\to (\neg p)^{+})\leftrightarrow \neg(1\to (\neg p\to \neg\neg 1)) & 3^{\circ}, 4^{\circ}, (6)\\
  6^{\circ} & \ (\neg p\to \neg \neg 1)\leftrightarrow \neg(p\to \neg 1) & (4)\\
  7^{\circ} & \ ((1\to 1)\to (\neg p)^{+})\leftrightarrow \neg(1\to \neg(p\to \neg 1)) & 5^{\circ}, 6^{\circ}, (6)\\
  8^{\circ} & \ ((1\to 1)\to (\neg p)^{+})\leftrightarrow \neg(\neg \neg 1\to \neg(p\to \neg 1)) & 7^{\circ}, 4^{\circ}, (6)\\
  9^{\circ} & \ (\neg \neg 1\to \neg(p\to \neg 1))\leftrightarrow ((p\to \neg 1)\to \neg 1) & (\text{Q}1)\\
  10^{\circ} & \ ((1\to 1)\to (\neg p)^{+})\leftrightarrow \neg((p\to \neg 1)\to \neg 1) & 8^{\circ}, 9^{\circ}, (6)\\
  11^{\circ} & \ ((p\to \neg 1)\to \neg 1)\leftrightarrow ((1\to 1)\to p^{-}) & (\text{Q}11)\\
  12^{\circ} & \ ((1\to 1)\to (\neg p)^{+})\leftrightarrow \neg ((1\to 1)\to p^-)& 10^{\circ}, 11^{\circ}, (6)\\
  13^{\circ} & \ \neg((1\to 1)\to p^{-})\leftrightarrow (\neg(1\to 1)\to \neg p^-)& (4)\\
  14^{\circ} & \ ((1\to 1)\to (\neg p)^{+})\leftrightarrow (\neg(1\to 1)\to \neg p^-)& 12^{\circ}, 13^{\circ}, (3)\\
  15^{\circ} & \ \neg (1\to 1)\leftrightarrow (1\to 1)& (8)\\
  16^{\circ} & \ ((1\to 1)\to (\neg p)^{+})\leftrightarrow ((1\to 1)\to \neg p^-)& 14^{\circ}, 15^{\circ}, (6)\\
  17^{\circ} & \ (\neg p)^{+}\leftrightarrow ((1\to 1)\to (\neg p)^{+})& (\text{Q}3)\\
  18^{\circ} & \ (\neg p)^{+}\leftrightarrow ((1\to 1)\to \neg p^-)& 17^{\circ}, 16^{\circ}, (3)\\
  19^{\circ} & \ ((1\to 1)\to \neg p^-)\leftrightarrow \neg p^-& (\text{Q}3)\\
  20^{\circ} & \ (\neg p)^{+}\leftrightarrow \neg p^-& 18^{\circ}, 19^{\circ}, (3)
\end{flalign*}
So $\vdash (\neg p)^{+}\leftrightarrow \neg p^-$. From the former result, we have $\vdash (\neg \neg p)^{+}\leftrightarrow \neg (\neg p)^-$, then
\begin{flalign*}
  1^{\circ} & \ (\neg \neg p)^{+}\leftrightarrow \neg(\neg p)^- & (\text{Hypothesis})\\
  2^{\circ} & \ \neg \neg p \leftrightarrow p& (10)\\
  3^{\circ} & \ p^{+}\leftrightarrow \neg(\neg p)^- & 1^{\circ}, 2^{\circ}, (6)\\
  4^{\circ} & \ \neg p^{+}\leftrightarrow \neg \neg(\neg p)^- & 3^{\circ}, (1)\\
  5^{\circ} & \ \neg \neg(\neg p)^-\leftrightarrow (\neg p)^- & (10)\\
  6^{\circ} & \ \neg p^{+}\leftrightarrow (\neg p)^- & 4^{\circ}, 5^{\circ}, (6)
\end{flalign*}
So we have $\vdash \neg p^{+}\leftrightarrow (\neg p)^-$.
\item If $\vdash p\leftrightarrow q$, then we have
\begin{flalign*}
  1^{\circ} & \ p \leftrightarrow q & (\text{Hypothesis})\\
  2^{\circ} & \ 1\to 1  & (5)\\
  3^{\circ} & \ (p\to 1)\leftrightarrow (q\to 1) & 1^{\circ}, 2^{\circ}, (2)\\
  4^{\circ} & \ ((p\to 1)\to 1)\leftrightarrow ((q\to 1)\to 1)& 3^{\circ}, 2^{\circ}, (2)\\
  5^{\circ} & \ ((1\to 1)\to p^{+})\leftrightarrow ((p\to 1)\to 1)& (\text{Q}11)\\
  6^{\circ} & \ ((1\to 1)\to p^{+})\leftrightarrow ((q\to 1)\to 1)& 5^{\circ}, 4^{\circ}, (3)\\
  7^{\circ} & \ ((q\to 1)\to 1)\leftrightarrow ((1\to 1)\to q^{+})& (\text{Q}11)\\
  8^{\circ} & \ ((1\to 1)\to p^{+})\leftrightarrow ((1\to 1)\to q^{+})& 6^{\circ}, 7^{\circ}, (3)\\
  9^{\circ} & \ p^{+}\leftrightarrow ((1\to 1)\to p^{+})& (\text{Q}3)\\
  10^{\circ} & \ p^{+}\leftrightarrow ((1\to 1)\to q^{+})& 9^{\circ}, 8^{\circ}, (3)\\
  11^{\circ} & \ ((1\to 1)\to q^{+})\leftrightarrow q^{+}& (\text{Q}3)\\
  12^{\circ} & \ p^{+}\leftrightarrow q^{+}& 10^{\circ}, 11^{\circ}, (3)
\end{flalign*}
So $\vdash p^{+}\leftrightarrow q^{+}$. Moreover, by (1) and the former, we have $\vdash (\neg p)^{+}\leftrightarrow (\neg q)^{+}$, then
\begin{flalign*}
  1^{\circ} & \ (\neg p)^{+}\leftrightarrow (\neg q)^{+} & (\text{Hypothesis})\\
  2^{\circ} & \ \neg p^{-}\leftrightarrow (\neg p)^{+} & (12)\\
  3^{\circ} & \ \neg p^{-}\leftrightarrow (\neg q)^{+} & 2^{\circ}, 1^{\circ}, (3)\\
  4^{\circ} & \ (\neg q)^{+}\leftrightarrow \neg q^{-}& (12)\\
  5^{\circ} & \ \neg p^{-}\leftrightarrow\neg q^{-} & 3^{\circ}, 4^{\circ}, (3)\\
  6^{\circ} & \ \neg\neg p^{-}\leftrightarrow \neg\neg q^{-} & 5^{\circ}, (1)\\
  7^{\circ} & \ \neg\neg p^{-}\leftrightarrow p^{-} & (10)\\
  8^{\circ} & \ p^{-}\leftrightarrow \neg\neg q^{-}& 6^{\circ}, 7^{\circ}, (6)\\
  9^{\circ} & \ \neg\neg q^{-}\leftrightarrow q^{-} & (10)\\
  10^{\circ} & \ p^{-}\leftrightarrow  q^{-}& 8^{\circ}, 9^{\circ}, (6)\\
\end{flalign*}
So we have $\vdash p^{-}\leftrightarrow q^{-}$.
\end{enumerate}
\end{proof}

\begin{prop}\label{QL2}
Let $F(V)$ be the formulas set of \emph{q$\L^{*}$}.
For any $p, q\in F(V)$, we define a relation $\sim$ by
\[p \sim q \ \text{iff}\ \vdash p \leftrightarrow q.\]
Then the relation $\sim$ is a congruence with respect to the propositional connectives $\to, \neg, ^{+}$ and $^{-}$.
\end{prop}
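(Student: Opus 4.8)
The plan is to verify the two requirements in the definition of a congruence: that $\sim$ is an equivalence relation on $F(V)$, and that it is compatible with each of the four connectives $\to$, $\neg$, ${}^{+}$, ${}^{-}$. Since $p\leftrightarrow q$ is merely an abbreviation for the pair $p\to q$ and $q\to p$, essentially every ingredient has already been assembled in Proposition \ref{QL1}, so the proof reduces to invoking the appropriate items of that proposition in the right order.

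First I would dispatch the equivalence-relation axioms. Reflexivity $p\sim p$ is exactly $\vdash p\leftrightarrow p$, which is Proposition \ref{QL1}(5). Symmetry is immediate from the definition of $\leftrightarrow$: the assertion $\vdash p\leftrightarrow q$ is by definition the conjunction of $\vdash p\to q$ and $\vdash q\to p$, a statement visibly unchanged under interchanging $p$ and $q$, so $q\sim p$. Transitivity is precisely Proposition \ref{QL1}(3): from $\vdash p\leftrightarrow q$ and $\vdash q\leftrightarrow r$ one obtains $\vdash p\leftrightarrow r$, that is, $p\sim r$.

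Next I would check compatibility with the connectives. For $\neg$, if $p\sim q$ then $\vdash\neg p\leftrightarrow\neg q$ by Proposition \ref{QL1}(1), hence $\neg p\sim\neg q$. For $\to$, if $p\sim q$ and $r\sim t$ then $\vdash(p\to r)\leftrightarrow(q\to t)$ by Proposition \ref{QL1}(2), hence $(p\to r)\sim(q\to t)$. For ${}^{+}$ and ${}^{-}$, if $p\sim q$ then $\vdash p^{+}\leftrightarrow q^{+}$ and $\vdash p^{-}\leftrightarrow q^{-}$ by Proposition \ref{QL1}(12), giving $p^{+}\sim q^{+}$ and $p^{-}\sim q^{-}$. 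Combining the three preceding paragraphs shows that $\sim$ is a congruence with respect to $\to$, $\neg$, ${}^{+}$ and ${}^{-}$.

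Since all the required deductions have already been carried out in Proposition \ref{QL1}, there is no genuine difficulty here; the only point demanding a moment's care is unwinding $\leftrightarrow$ into its two constituent implications, both so that symmetry can be read off directly and so that the hypotheses of Proposition \ref{QL1}(1), (2), (3) and (12) are seen to be met literally.
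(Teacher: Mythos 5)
Your proof is correct and follows essentially the same route as the paper's: reflexivity from Proposition \ref{QL1}(5), symmetry from the definition of $\leftrightarrow$, transitivity from \ref{QL1}(3), and compatibility with $\neg$, $\to$, $^{+}$, $^{-}$ from \ref{QL1}(1), (2) and (12) respectively. (The paper cites ``(13)'' for the $^{+}$/$^{-}$ step, which is evidently a typo for (12), the item you correctly use.)
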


\begin{proof}
Since $\vdash p \leftrightarrow p$ by Proposition 4.1(5), we have that the relation $\sim$ is reflexivity. It is obvious that the relation $\sim$ is symmetric. If $p\sim q$ and $q\sim r$, then $\vdash p \leftrightarrow q$ and $\vdash q \leftrightarrow r$,  we have $\vdash p \leftrightarrow r$ by Proposition \ref{QL1}(3) and then $p \sim r$. Hence the relation $\sim$ is an equivalence relation. If $p\sim q$, then $\vdash p \leftrightarrow q$,  we have $\vdash \neg p \leftrightarrow \neg q$ by Proposition \ref{QL1}(1), $\vdash p^+\leftrightarrow q^+$ and $\vdash p^-\leftrightarrow q^-$ by Proposition \ref{QL1}(13), so $\neg p \sim \neg q$, $p^+\sim q^+$ and $p^-\sim q^-$. If $p\sim q$ and $r\sim t$, then $\vdash p\leftrightarrow q$ and $\vdash r \leftrightarrow t$, we have $\vdash (p\to r) \leftrightarrow (q\to t)$ by Proposition \ref{QL1}(2), so $(p\to r) \sim (q\to t)$.  Hence the relation $\sim$ is a congruence with respect to the propositional connectives $\to, \neg, ^{+}$ and $^{-}$.
\end{proof}

Let $F(V)$ be a formulas set of q$\L^{*}$. For any $p\in F(V)$, the equivalence class of $p$ with respect to $\sim$ is denoted by $[p]_\sim =\{q\in F(V): p\sim q\}$ and all the equivalence classes of formulas in $F(V)$ is denoted by $[F(V)]_\sim$, i.e., $[F(V)]_\sim=\{[p]_\sim : p\in F(V)\}$. For any $[p]_\sim, [q]_\sim\in [F(V)]_\sim$, we can define $\neg[p]_\sim=[\neg p]_\sim$, $[p]_\sim \to [q]_\sim =[p\to q]_\sim$, $([p]_\sim)^{+}=[p^+]_\sim$ and $([p]_\sim)^{-}=[p^-]_\sim$.

\begin{prop}\label{QL3}
Let $F(V)$ be the formulas set of \emph{q$\L^{*}$}. Then $[\mathbf{F}(\mathbf{V})]_\sim=\langle [F(V)]_\sim; \to, \neg, ^{+}, ^{-}, [1]_\sim \rangle$ is a quasi-Wajsberg* algebra.
\end{prop}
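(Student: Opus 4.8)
The plan is to treat $[\mathbf{F}(\mathbf{V})]_\sim$ as a Lindenbaum--Tarski algebra and to verify the twelve defining identities (QW*1)--(QW*12) of Definition \ref{dqw} one at a time. First I would record that the four operations $\to,\neg,{}^{+},{}^{-}$ on $[F(V)]_\sim$ are well defined precisely because $\sim$ is a congruence for these connectives, which is exactly Proposition \ref{QL2}; moreover, since $\vee$ is a derived term in these primitive connectives, $\sim$ respects $\vee$ as well. Consequently an identity $s=t$ holds in the quotient if and only if $s\sim t$, that is, iff $\vdash s\leftrightarrow t$. The whole proof therefore reduces to producing, for each defining identity, a derivation of the corresponding biconditional in q$\L^{*}$.

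Next I would dispatch the axioms that are essentially built into the calculus. Identities (QW*1), (QW*3), (QW*6), (QW*7), (QW*9) and (QW*11) are immediate from the biconditional axioms (Q1), (Q2), (Q4), (Q5), (Q6) and (Q8) respectively, and (QW*4) follows by instantiating (Q3) at $p:=x\to y$. Identity (QW*8), $\neg\neg x=x$, is not an axiom of q$\L^{*}$ but is already available as Proposition \ref{QL1}(9). For (QW*2) the calculus supplies only one implication, namely (Q9); the reverse implication is the instance of (Q9) obtained by interchanging $p$ and $q$, so the biconditional follows. For (QW*5), which asserts a chain of three equal terms, I would combine (Q11), giving $\vdash (1\to 1)\to x^{+}\leftrightarrow (x\to 1)\to 1$, with the instance $\vdash x\leftrightarrow (1\to 1)\to x$ of (Q3) together with Proposition \ref{QL1}(12) (to carry ${}^{+}$ across a provable equivalence, yielding $\vdash x^{+}\leftrightarrow ((1\to 1)\to x)^{+}$) and a further application of (Q3); chaining these shows all three terms are provably equivalent, and the ${}^{-}$ half is symmetric.

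The main obstacle is (QW*10) and (QW*12), since neither commutativity of $\vee$ nor the ``unswapped'' form of distributivity appears directly among the axioms: the only distributivity axiom (Q7) reads $p\to (q\vee r)\leftrightarrow (p\to r)\vee (p\to q)$, with the order of the two disjuncts reversed. I would first derive commutativity by instantiating (Q7) at $p:=1\to 1$. On the left, $(1\to 1)\to (q\vee r)$ is provably equivalent to $q\vee r$ by (Q3); on the right, $((1\to 1)\to r)\vee ((1\to 1)\to q)$ is provably equivalent to $r\vee q$, using (Q3) on each disjunct together with the fact that $\sim$ respects $\vee$ (Proposition \ref{QL2}). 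Chaining the two sides of (Q7) then gives $\vdash (q\vee r)\leftrightarrow (r\vee q)$, which is (QW*10). With commutativity in hand, (QW*12) follows from (Q7) by replacing $(x\to z)\vee (x\to y)$ with $(x\to y)\vee (x\to z)$. Assembling all twelve verified biconditionals shows that every defining identity holds in $[\mathbf{F}(\mathbf{V})]_\sim$, so it is a quasi-Wajsberg* algebra.
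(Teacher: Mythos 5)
Your proposal is correct and follows essentially the same route as the paper, whose proof simply asserts that the quasi-Wajsberg* identities follow from the axiom schemas together with Proposition \ref{QL2}. You additionally supply the one genuinely non-obvious step the paper leaves implicit --- deriving commutativity of $\vee$ (QW*10) by instantiating (Q7) at $p:=1\to 1$ and cancelling the $(1\to 1)\to{}$ prefixes via (Q3), which is needed because (Q7) states distributivity with the disjuncts swapped and $x\vee y$ is not a symmetric term --- and that derivation, together with your handling of the one-directional axiom (Q9) and of (QW*5), is valid.
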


\begin{proof}
Follows from the axioms schemas of q$\L^{*}$ and Proposition \ref{QL2}, we have that $[\mathbf{F}(\mathbf{V})]_\sim=\langle [F(V)]_\sim; \to, \neg, ^{+}, ^{-}, [1]_\sim \rangle$ is a quasi-Wajsberg* algebra.
\end{proof}

Moreover, we can define $-([p]_\sim)=\neg[ p]_\sim$, $[p]_\sim \oplus [q]_\sim =\neg[p]_\sim\to [q]_\sim$ and $[0]_\sim=[p\to p]_\sim$, then the following results hold.

\begin{coro}\label{cor1}
Let $F(V)$ be the formulas set of \emph{q$\L^{*}$}. Then $\langle [F(V)]_\sim;\oplus, -, ^{+}, ^{-}$, $[0]_\sim, [1]_\sim \rangle$ is a quasi-MV* algebra.
\end{coro}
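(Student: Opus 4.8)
The plan is to recognize that this corollary is an immediate instance of the construction $g$ from Proposition \ref{p3.3}, applied to the quasi-Wajsberg* algebra produced in Proposition \ref{QL3}. By Proposition \ref{QL3}, the structure $[\mathbf{F}(\mathbf{V})]_\sim=\langle [F(V)]_\sim; \to, \neg, ^{+}, ^{-}, [1]_\sim \rangle$ is already known to be a quasi-Wajsberg* algebra. The remaining data---the binary operation $\oplus$, the unary operation $-$, and the constant $[0]_\sim$---are defined on $[F(V)]_\sim$ by exactly the same formulas that define the map $g$: namely $-([p]_\sim)=\neg[p]_\sim$, $[p]_\sim \oplus [q]_\sim =\neg[p]_\sim\to [q]_\sim$, and $[0]_\sim=[p\to p]_\sim$. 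Thus $\langle [F(V)]_\sim;\oplus, -, ^{+}, ^{-}, [0]_\sim, [1]_\sim \rangle$ is literally $g([\mathbf{F}(\mathbf{V})]_\sim)$.

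First I would check that the constant $[0]_\sim$ is well defined, i.e. that $[p\to p]_\sim$ does not depend on the choice of $p$. This is immediate: since $[\mathbf{F}(\mathbf{V})]_\sim$ is a quasi-Wajsberg* algebra, Proposition \ref{l0.2}(3) gives $[p]_\sim \to [p]_\sim=[q]_\sim\to [q]_\sim$ for all $p,q\in F(V)$. Equivalently, one may invoke Proposition \ref{QL1}(7), which yields $\vdash (p\to p)\leftrightarrow(q\to q)$, so that $[p\to p]_\sim=[q\to q]_\sim$. Hence $[0]_\sim$ is a genuine, unambiguous element of the algebra, and the operations $^{+}$ and $^{-}$ are simply inherited unchanged from $[\mathbf{F}(\mathbf{V})]_\sim$.

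With this in place the conclusion follows directly: Proposition \ref{p3.3} asserts that for any quasi-Wajsberg* algebra $\mathbf{W}$ the structure $g(\mathbf{W})=\langle W;\oplus,{-},^{+},^{-},0,1\rangle$, built with precisely these definitions, satisfies all of (QMV*1)--(QMV*14) and is therefore a quasi-MV* algebra. Applying this with $\mathbf{W}=[\mathbf{F}(\mathbf{V})]_\sim$ yields the claim with no further computation. The only real obstacle is the bookkeeping of verifying that the six defined operations align term-for-term with those of $g$ so that the tuple we obtain is exactly $g([\mathbf{F}(\mathbf{V})]_\sim)$; once that identification is made, every axiom has already been discharged once and for all in the proof of Proposition \ref{p3.3}, and nothing more needs to be said.
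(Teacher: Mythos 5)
Your proposal is correct and follows exactly the paper's own argument: the paper proves this corollary by applying Proposition \ref{p3.3} to the quasi-Wajsberg* algebra $[\mathbf{F}(\mathbf{V})]_\sim$ obtained in Proposition \ref{QL3}. Your additional check that $[0]_\sim=[p\to p]_\sim$ is independent of the choice of $p$ is a harmless (and welcome) extra detail that the paper leaves implicit.
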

\begin{proof}
Using Proposition \ref{p3.3} and Proposition \ref{QL3}, we have that $\langle [F(V)]_\sim;\oplus, -$, $^{+}, ^{-},$ $ [0]_\sim, [1]_\sim \rangle$ is a quasi-MV* algebra.
\end{proof}

\begin{prop}
Let $F(V)$ be the formulas set of \emph{q$\L^{*}$}. Then $\langle [F(V)]_\sim;\oplus, -, [0]_\sim, [1]_\sim \rangle$ is an MV*-algebra.
\end{prop}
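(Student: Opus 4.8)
The plan is to reduce this to the term equivalence between Wajsberg*-algebras and MV*-algebras recorded in Corollary \ref{cor}. By Proposition \ref{QL3} the algebra $\langle [F(V)]_\sim;\to,\neg,^{+},^{-},[1]_\sim\rangle$ is a quasi-Wajsberg* algebra, and the operations $-,\oplus,[0]_\sim$ introduced before the statement (namely $-[p]_\sim=\neg[p]_\sim$, $[p]_\sim\oplus[q]_\sim=\neg[p]_\sim\to[q]_\sim$ and $[0]_\sim=[p\to p]_\sim$) are precisely those by which Corollary \ref{cor}(2) turns a Wajsberg*-algebra into an MV*-algebra. Hence it suffices to show that the $\langle\to,\neg,[1]_\sim\rangle$-reduct of $[\mathbf{F}(\mathbf{V})]_\sim$ is a Wajsberg*-algebra; the MV*-algebra structure then follows directly from Corollary \ref{cor}(2) with no further computation.

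Comparing Definition \ref{dqw} with Definition \ref{d1.3}, the only axiom genuinely strengthened on passing from quasi-Wajsberg* algebras to Wajsberg*-algebras is (QW*4), which becomes (M4): $(y\to y)\to x=x$ for every $x$, i.e.\ $0\to x=x$ for all $x$, where $0=[1]_\sim\to[1]_\sim$. In the notation of Section \ref{Sec-qw} this is exactly the assertion $R([F(V)]_\sim)=[F(V)]_\sim$. Once this identity holds, (QW*5) forces the primitive operations to satisfy $([p]_\sim)^{+}=([p]_\sim\to[1]_\sim)\to[1]_\sim$ and $([p]_\sim)^{-}=([p]_\sim\to\neg[1]_\sim)\to\neg[1]_\sim$, so they coincide with the defined Wajsberg* terms; consequently each remaining quasi-Wajsberg* axiom (QW*1)--(QW*3) and (QW*6)--(QW*12) is read verbatim as the corresponding Wajsberg* axiom (M1)--(M3) and (M5)--(M11). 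Thus the whole reduction rests on the single identity $0\to[p]_\sim=[p]_\sim$.

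To establish that identity I would compute $0\to[p]_\sim=[(1\to 1)\to p]_\sim$ and invoke axiom (Q3), which gives $\vdash p\leftrightarrow((q\to q)\to p)$; taking $q=1$ yields $\vdash p\leftrightarrow((1\to 1)\to p)$, that is $[(1\to 1)\to p]_\sim=[p]_\sim$. Here Proposition \ref{QL1}(7) guarantees $[q\to q]_\sim=[1\to 1]_\sim=[0]_\sim$, so the choice of $q$ is immaterial. The collapse of the primitive $^{+}$ onto the term $([p]_\sim\to[1]_\sim)\to[1]_\sim$ at the level of classes is then immediate from axiom (Q11): combining $\vdash(1\to 1)\to p^{+}\leftrightarrow(p\to 1)\to 1$ with $0\to[p^{+}]_\sim=[p^{+}]_\sim$ gives $([p]_\sim)^{+}=([p]_\sim\to[1]_\sim)\to[1]_\sim$, and symmetrically for $^{-}$.

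I expect the only delicate point to be bookkeeping rather than mathematics: one must make sure the primitive unary operations $^{+},^{-}$ of $[\mathbf{F}(\mathbf{V})]_\sim$ really collapse onto the defined terms of a Wajsberg*-algebra, so that the axioms (QW*6), (QW*9), (QW*12) mentioning $^{+}$ and $^{-}$ transfer unchanged; as noted, $R([F(V)]_\sim)=[F(V)]_\sim$ settles exactly this. A purely computational alternative, staying inside the quasi-MV* language of Corollary \ref{cor1}, is to verify directly the one MV*-axiom (MV*5) absent from the quasi-MV* list, namely $[p]_\sim\oplus[0]_\sim=[p]_\sim$: since $[p]_\sim\oplus[0]_\sim=[\neg p\to(1\to 1)]_\sim$, Proposition \ref{QL1}(10) rewrites this as $[\neg(1\to 1)\to p]_\sim$, and using $\neg(1\to 1)=1\to 1$ in the quasi-Wajsberg* algebra $[\mathbf{F}(\mathbf{V})]_\sim$ (Remark \ref{r1.1}, Proposition \ref{l0.2}(4)) together with (Q3) reduces it to $[p]_\sim$; every remaining MV*-axiom then coincides with a quasi-MV*-axiom once $\oplus[0]_\sim$ acts as the identity.
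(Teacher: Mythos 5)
Your proposal is correct and matches the paper's proof in substance: the paper likewise reduces everything to the single identity $[q]_\sim\oplus[0]_\sim=[q]_\sim$, obtained from axiom (Q3) together with $\neg(p\to p)\leftrightarrow(p\to p)$ and the contraposition rule, and then concludes from the already-established quasi-MV* structure of Corollary \ref{cor1}. Your primary packaging through the Wajsberg* reduct and Corollary \ref{cor}(2) is only a cosmetic variant of this, and the \emph{purely computational alternative} you sketch at the end is essentially verbatim the paper's argument.
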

\begin{proof}
For any $[q]_\sim\in [F(V)]_\sim$, we have $[q]_\sim\oplus [0]_\sim=\neg[q]_\sim\to [p\to p]_\sim=[\neg q\to (p\to p)]_\sim$. Now,
\begin{flalign*}
  1^{\circ} & \ (\neg q\to (p\to p))\leftrightarrow (\neg(p\to p) \to q) & \text{Proposition}\ \ref{QL1}(11)\\
  2^{\circ} & \ \neg(p\to p)\leftrightarrow(p\to p)& \text{Proposition}\ \ref{QL1}(8)\\
  3^{\circ} & \ (\neg q\to (p\to p))\leftrightarrow ((p\to p) \to q)& 1^{\circ}, 2^{\circ}, \text{Proposition}\ \ref{QL1}(6)\\
  4^{\circ} & \ ((p\to p) \to q)\leftrightarrow q& (\text{Q}3)\\
  5^{\circ} & \ (\neg q\to (p\to p))\leftrightarrow q& 3^{\circ}, 4^{\circ}, \text{Proposition}\ \ref{QL1}(3)
\end{flalign*}
so we have $[q]_\sim\oplus [0]_\sim=[\neg q\to (p\to p)]_\sim=[q]_\sim$. Hence $\langle [F(V)]_\sim;\oplus, -, [0]_\sim, [1]_\sim \rangle$ is an MV*-algebra by Corollary 4.1.
\end{proof}

%\begin{rem}
%Let $F(V)$ be a set of formulas of q$\L^{*}$. Then for any $[p]_\sim, [q]_\sim\in [F(V)]_\sim$, the relation $\le$ is partial-ordering. Indeed, $[p]_\sim \le [q]_\sim$ iff $[p]_\sim\vee [q]_\sim=([1]_\sim\to [1]_\sim)\to [q]_\sim=[(1\to 1)\to q]_\sim=[q]_\sim$ by (Q3). Hence, $\le$ is partial-ordering on $[\mathbf{F}(V)]_\sim=\langle [F(V)]_\sim; \to, \neg, ^{+}, ^{-}, [1]_\sim \rangle$.
%\end{rem}

In \cite{lsm2}, authors introduced the filter of MV*-algebra.
A subset $F$ is called a \emph{filter} of  MV*-algebra $\textbf{M}=\langle M;\oplus,^{+},^{-},0,1\rangle$, if the following conditions are satisfied:

(F1) $M^+=\{x^+: x\in M\}\subseteq F$,

(F2) If $x\in F$ and $y\ominus x\in F$, then $y\in F$,

(F3) If $x\oplus y\in F$ and $t\in A$, then $(x\oplus t)\oplus (y\ominus t)\in F$.

\begin{prop}
Let $\Theta$ be a \emph{q$\L^{*}$}-theory of \emph{q$\L^{*}$}. Then $[\Theta]_\sim=\{[p]_\sim\in [F(V)]_\sim: p\in \Theta\}$ is a filter of $\langle [F(V)]_\sim;\oplus, -,  [0]_\sim, [1]_\sim \rangle$.
\end{prop}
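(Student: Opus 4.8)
The plan is to verify the three defining conditions (F1)--(F3) of a filter directly for $[\Theta]_\sim$, translating each into a provability statement in $\text{q}\L^{*}$ and then exploiting that $\Theta$ is deductively closed. Two preliminary facts drive everything. First, since a proof from $\Theta$ may invoke axioms, $\Theta^{\vdash}=\Theta$ forces $\Theta$ to contain every theorem of $\text{q}\L^{*}$; so any $\psi$ with $\vdash\psi$ lies in $\Theta$. Second, I would record the membership criterion
\[
[\phi]_\sim\in[\Theta]_\sim\iff (r\to r)\to\phi\in\Theta\quad(\text{for any formula }r),
\]
together with its sharpening: when $\phi$ is itself an implication $\alpha\to\beta$, then $[\phi]_\sim\in[\Theta]_\sim$ iff $\phi\in\Theta$. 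For the forward direction I take a representative $\phi_0\in\Theta$ with $\phi_0\sim\phi$; from $\vdash\phi_0\to\phi$ the implication $\phi_0\to\phi$ lies in $\Theta$, so (R1) gives $(r\to r)\to\phi\in\Theta$, and when $\phi=\alpha\to\beta$ rule (R2) strips the leading $(r\to r)\to$ to yield $\phi\in\Theta$. The backward direction uses $(r\to r)\to\phi\sim\phi$, which follows from (Q3) and Proposition~\ref{QL1}(7).

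For (F1) I must show $([p]_\sim)^{+}=[p^{+}]_\sim\in[\Theta]_\sim$ for every $p$. Applying the axiom (Q10) to the formula $p\to 1$ gives $\vdash(p\to 1)\to 1$, so $(p\to 1)\to 1\in\Theta$; and combining (Q11) with (Q3) yields $p^{+}\sim(1\to 1)\to p^{+}\sim(p\to 1)\to 1$, whence $[p^{+}]_\sim=[(p\to 1)\to 1]_\sim\in[\Theta]_\sim$. For (F2), with $x=[p]_\sim$ and $y=[q]_\sim$ I first compute $y\ominus x=[q]_\sim\oplus(-[p]_\sim)=[\neg q\to\neg p]_\sim=[p\to q]_\sim$ (the last step by (Q1)); thus the hypotheses become $[p]_\sim\in[\Theta]_\sim$ and $p\to q\in\Theta$. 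Choosing $p_0\in\Theta$ with $p_0\sim p$, Proposition~\ref{QL1}(2) gives $\vdash(p\to q)\leftrightarrow(p_0\to q)$, so (R1) and (R2) carry $p\to q$ to $p_0\to q\in\Theta$; then (R1) applied to $p_0$ and $p_0\to q$ produces $(r\to r)\to q\in\Theta$, i.e. $[q]_\sim\in[\Theta]_\sim$. This is just modus ponens at the level of classes.

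The crux is (F3). Writing $x=[p]_\sim$, $y=[q]_\sim$, $t=[s]_\sim$, a short computation (using the definition of $\oplus$ on classes and (Q1)) gives $x\oplus y=[\neg p\to q]_\sim$, $x\oplus t=[\neg p\to s]_\sim$ and $y\ominus t=[s\to q]_\sim$, so the target is $(x\oplus t)\oplus(y\ominus t)=[\neg(\neg p\to s)\to(s\to q)]_\sim$, while the hypothesis $x\oplus y\in[\Theta]_\sim$ reduces, by the implication criterion, to $\neg p\to q\in\Theta$. The key observation is that the target implication is provably equivalent to $(s\to\neg p)\to(s\to q)$: by (Q5) one has $\vdash\neg(\neg p\to s)\leftrightarrow(s\to\neg p)$, and then Proposition~\ref{QL1}(2) with the common consequent $s\to q$ fixed gives the equivalence. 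Since $\vdash s\to s$ by Proposition~\ref{QL1}(5), we have $s\to s\in\Theta$, and feeding the two premises $s\to s$ and $\neg p\to q$ into rule (R3) outputs exactly $(s\to\neg p)\to(s\to q)\in\Theta$; passing to $\sim$-classes gives $(x\oplus t)\oplus(y\ominus t)\in[\Theta]_\sim$.

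The main obstacle is precisely the discovery of this (R3) application in (F3): semantically $(x\oplus t)\oplus(y\ominus t)$ is neither above nor below $x\oplus y$ in general, so no single provable implication $x\oplus y\to(x\oplus t)\oplus(y\ominus t)$ is available, and a naive modus-ponens attempt fails. What rescues the argument is recognizing that the rigid-looking conclusion of (R3) matches the target once the first premise is taken to be the tautology $s\to s$; after that, everything is routine bookkeeping with the equivalences of Proposition~\ref{QL1} and the two preliminary facts above.
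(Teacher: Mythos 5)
Your proposal is correct and follows essentially the same route as the paper: (F1) via (Q10) and (Q11), (F2) as modus ponens through (R1), and (F3) by feeding a tautological premise $s\to s$ together with $\neg p\to q$ into (R3) to land in the class $([p]_\sim\oplus[s]_\sim)\oplus([q]_\sim\ominus[s]_\sim)$ (the paper applies (R3) with the premises in the other order, obtaining the $\sim$-equivalent formula $(q\to r)\to(\neg p\to r)$). Your explicit membership criterion for $[\Theta]_\sim$ is a small but genuine improvement, since the paper silently passes from $[p]_\sim\in[\Theta]_\sim$ to $p\in\Theta$ without justifying the choice of representative.
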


\begin{proof}
Since $\Theta$ is a q$\L^{*}$-\emph{theory} of q$\L^{*}$, we have that $p\in \Theta$ when $\Theta\vdash p$ for any $p\in F(V)$. Next we verify that $[\Theta]_\sim$ is a filter of $\langle [F(V)]_\sim;\oplus, -,  [0]_\sim, [1]_\sim \rangle$.

(F1) For any $([p]_\sim)^+\in ([F(V)]_\sim)^+$ where $p\in F(V)$, then
\begin{flalign*}
  1^{\circ} & \ ((p\to 1)\to 1)\to ((1\to 1)\to p^+)& (\text{Q}11)\\
  2^{\circ} & \ ((1\to 1) \to p^+)\to p^+ & (\text{Q}3)\\
  3^{\circ} & \ ((p\to 1)\to 1)\to p^+& 1^{\circ}, 2^{\circ}, \text{Proposition}\ \ref{QL1}(3)\\
  4^{\circ} & \ (p\to 1)\to 1 & (\text{Q}10)\\
  5^{\circ} & \ (1\to 1)\to p^+ & 4^{\circ}, 3^{\circ}, (\text{R}1)
\end{flalign*}
we have $\vdash (1\to 1)\to p^+$, so $(1\to 1)\to p^+\in \Theta$ and then $[(1\to 1)\to p^+]_\sim\in [\Theta]_\sim$. Since $[(1\to 1)\to p^+]_\sim=[1\to 1]_\sim\to [p^+]_\sim=[\mathbf{0}]_\sim\to [p^+]_\sim=\neg[\mathbf{0}]_\sim\to [p^+]_\sim=[\mathbf{0}]_\sim\oplus [p^+]_\sim=[p^+]_\sim$ by Proposition \ref{QL3}, Remark \ref{r1.1} and Corollary \ref{cor1}, we have $([p]_\sim)^+=[p^+]_\sim=[(1\to 1)\to p^+]_\sim\in [\Theta]_\sim$. So $([F(V)]_\sim)^+\subseteq [\Theta]_\sim$.

(F2) Assume that $[p]_\sim \in [\Theta]_\sim$ and $[q]_\sim\ominus [p]_\sim \in [\Theta]_\sim$, we have $p\in \Theta$, so $\Theta\vdash p$. From (Q1), we have $\vdash(p\to q)\leftrightarrow (\neg q\to \neg p)$, so $[p\to q]_\sim=[\neg q\to \neg p]_\sim$. Since $[\neg q\to \neg p]_\sim=\neg[q]_\sim\to \neg[p]_\sim=[q]_\sim\oplus (-[p]_\sim)=[q]_\sim\ominus [p]_\sim$, we have $[p\to q]_\sim\in [\Theta]_\sim$, which implies that $p\to q\in \Theta$, and then we have $\Theta\vdash p\to q$. Notice that $\Theta$ is closed under rules of deduction, we have $\Theta\vdash (1\to 1)\to q$ by (R1), it turns out that $(1\to 1)\to q\in \Theta$, so $[q]_\sim=[(1\to 1)\to q]_\sim\in [\Theta]_\sim$.

(F3) Assume that $[p]_\sim \oplus [q]_\sim\in [\Theta]_\sim$ and $[r]_\sim\in [F(V)]_\sim$, we have $[\neg p\to q]_\sim=\neg[p]_\sim \to [q]_\sim=[p]_\sim \oplus [q]_\sim \in [\Theta]_\sim$, so $\Theta\vdash \neg p\to q$.
Using Proposition \ref{QL1}(5), we have $\vdash r\to r$. Since $\Theta^{\vdash}=\Theta$, we have $\Theta\vdash r\to r$. So $\Theta \vdash (q\to r)\to (\neg p\to r)$ by (R3) and then we have $[(q\to r)\to (\neg p\to r)]_\sim\in[\Theta]_\sim$. Because $[(q\to r)\to (\neg p\to r)]_\sim=([q]_\sim\to [r]_\sim)\to (\neg [p]_\sim\to [r]_\sim)=\neg\neg(\neg\neg[q]_\sim\to [r]_\sim)\to (\neg [p]_\sim\to [r]_\sim)=-(-[q]_\sim\oplus [r]_\sim)\oplus([p]_\sim\oplus [r]_\sim)=([q]_\sim\oplus(- [r]_\sim))\oplus([p]_\sim\oplus [r]_\sim)=([p]_\sim\oplus [r]_\sim)\oplus ([q]_\sim\ominus[r]_\sim)$ by Proposition \ref{QL3}, (QW*8), Corollary \ref{cor1}, (QMV*10) and (QMV*1), we have $([p]_\sim\oplus [r]_\sim)\oplus ([q]_\sim\ominus[r]_\sim)\in [\Theta]_\sim$.

Hence $[\Theta]_\sim$ is a filter of the MV*-algebra $\langle [F(V)]_\sim;\oplus, -, [0]_\sim, [1]_\sim \rangle$.
\end{proof}

\begin{rem}
Let $F(V)$ be the formulas set of q$\L^{*}$ and denote $\text{R}(F(V))=\{p\in F(V): (1\to 1)\to p\vdash p\}$. Then for any $q\in \text{R}\{F(V)\}$, we have $\vdash_{\footnotesize \text{q}\L^{*}} q$ iff $\vdash_{\footnotesize \L^{*}} q$.
\end{rem}

\begin{defn}
Let $F(V)$ be the set of formulas of q$\L^{*}$. Then the mapping
$v^{*}: F(V)\to [-1,1]\times [-1,1]$ which is defined by
$ p \mapsto v^{*}(p)=\langle a, b\rangle$, for any $ a,b\in [-1,1]$,
is called a q$\L^{*}$-\emph{valuation}(\emph{valuation}, for short), if the following conditions are satisfied for any $p,q\in F(V)$:

(1) $v^{*}(1)=\langle 1, 0\rangle$,

(2) $v^{*}(\neg p)=\langle -a,-b\rangle$,

(3) $v^{*}(p\to q)=\langle \min\{1,\max\{-1,c-a\}\},0\rangle$ where $v^{*}(q)=\langle c,d\rangle$,

(4) $v^{*}(p^{+})=\langle \max\{0,a\},b\rangle$ and $v^{*}(p^{-})=\langle \min\{0,a\},b\rangle$.
\end{defn}

\begin{defn}
Let $F(V)$ be the set of formulas of q$\L^{*}$ and $p\in F(V)$. If $v^{*}(p)\in [0,1]\times [0,1]$ for any valuation $v^{*}$, then $p$ is called a \emph{tautology} and denoted by $\models_{\tiny \text{q}\L^{*}} p$.
\end{defn}

\begin{prop} Let $F(V)$ be the set of formulas of \emph{q$\L^{*}$}.
If $\vdash p$, then \emph{$\models_{\tiny \text{q}\L^{*}} p$}.
\end{prop}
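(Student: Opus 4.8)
The plan is to prove soundness by the standard route: show that every axiom of $\text{q}\L^{*}$ is a tautology and that every deduction rule preserves tautology-hood, then induct on the length of a proof. The key preliminary observation is that a valuation $v^{*}$ is precisely a homomorphism from the free algebra $\langle F(V);\to,\neg,^{+},^{-},1\rangle$ into the quasi-Wajsberg* algebra $\mathbf{R}^{*}$ of Example \ref{e0.2}: clauses (1)--(4) of the definition of a valuation coincide verbatim with the operations of $\mathbf{R}^{*}$. Writing $v^{*}(\phi)=\langle a_{\phi},b_{\phi}\rangle$, the definition of $\to$ gives $v^{*}(\phi\to\psi)=\langle \min\{1,\max\{-1,a_{\psi}-a_{\phi}\}\},0\rangle$, whose second coordinate is always $0$ and whose first coordinate is $\geq 0$ exactly when $a_{\psi}\geq a_{\phi}$. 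Hence $\phi\to\psi$ is a tautology iff $a_{\phi}\leq a_{\psi}$ for every valuation, and consequently $\phi\leftrightarrow\psi$ is a tautology iff $a_{\phi}=a_{\psi}$ for every valuation. This reduces the entire theorem to statements about first coordinates.

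First I would dispatch the axioms. Since $v^{*}$ is a homomorphism and $\mathbf{R}^{*}$ satisfies (QW*1)--(QW*12), each biconditional axiom Q1--Q8 and Q11 asserts the equality of two terms interpreted by an $\mathbf{R}^{*}$-identity, so the two sides receive equal first coordinates under every valuation and the biconditional is a tautology; concretely Q1 is (QW*1), Q4 is (QW*6), Q5 is (QW*7), Q6 is (QW*9), Q7 is (QW*12) combined with the commutativity (QW*10), Q8 is (QW*11), and Q11 is (QW*5). The only place requiring genuine care is Q3, where the left side is a bare formula: $v^{*}((q\to q)\to p)=\langle a_{p},0\rangle$ need not equal $v^{*}(p)=\langle a_{p},b_{p}\rangle$ on the nose, since the second coordinates can differ, but the two do share the first coordinate $a_{p}$, which is all the reduction requires. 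The two implication-only axioms are then immediate: Q9 is a tautology because (QW*2) forces its two inner terms to have equal first coordinate, so the outer implication evaluates to $\langle 0,0\rangle$; and Q10, namely $p\to 1$, is a tautology because $a_{1}=1\geq a_{p}$ always.

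Next I would check the three rules against the first-coordinate criterion. For (R3), if $p\to q$ and $r\to t$ are tautologies then $a_{q}\geq a_{p}$ and $a_{t}\geq a_{r}$, whence $a_{t}-a_{p}\geq a_{r}-a_{q}$; since the clamp $x\mapsto\min\{1,\max\{-1,x\}\}$ is monotone non-decreasing, the first coordinate of $v^{*}(p\to t)$ dominates that of $v^{*}(q\to r)$, so $(q\to r)\to(p\to t)$ is a tautology. For (R1), note $v^{*}(r\to r)=\langle 0,0\rangle$, so $v^{*}((r\to r)\to q)=\langle \min\{1,\max\{-1,a_{q}\}\},0\rangle$; the hypotheses that $p$ and $p\to q$ are tautologies give $a_{p}\geq 0$ and $a_{q}\geq a_{p}$, hence $a_{q}\geq 0$ and the conclusion is a tautology. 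For (R2), again using $v^{*}(r\to r)=\langle 0,0\rangle$ one gets $v^{*}((r\to r)\to(p\to q))=v^{*}(p\to q)$ because the clamp fixes values already lying in $[-1,1]$, so the premise being a tautology is literally the conclusion being a tautology.

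Finally I would close by induction on the length $n$ of a proof $q_{1},\dots,q_{n}=p$: each $q_{i}$ is either an axiom, hence a tautology by the axiom analysis, or is obtained from two earlier formulas by (R1), (R2) or (R3), hence a tautology by the rule analysis applied to the inductive hypothesis. Thus $p=q_{n}$ is a tautology. I expect the main obstacle to be not any single computation but the clean formulation of the first-coordinate reduction together with the second-coordinate subtlety flagged at Q3: one must resist asserting the full equality $v^{*}(\phi)=v^{*}(\psi)$ and instead argue only at the level of first coordinates (equivalently, in the image of $x\mapsto 0\to x$), which is exactly what the connective $\to$ sees and what tautology-hood of an implication depends on.
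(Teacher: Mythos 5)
Your proof is correct and follows essentially the same route as the paper's: show that every axiom is a tautology and that each of (R1)--(R3) preserves tautologies, then induct on the length of the proof. Your treatment is in fact more careful than the paper's on the axiom step --- the paper merely asserts that the axioms are tautologies, whereas you justify this via the observation that valuations are exactly homomorphisms into the quasi-Wajsberg* algebra $\mathbf{R}^{*}$, reduce everything to first coordinates, and correctly flag the one place ((Q3)) where the two sides of a biconditional need not be equal in $\mathbf{R}^{*}$ but still share a first coordinate.
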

\begin{proof}
We can get that axioms (Q1)--(Q11) are tautologies. And except for (Q10), which assignment belongs to $[0,1]\times[0,1]$, the other axioms are assigned to $\langle0,0\rangle$. Next we verify that the deduction rules keep tautologies.

(R1) For any $p, q\in F(V)$, if $\models_{\tiny \text{q}\L^{*}} p$ and $\models_{\tiny \text{q}\L^{*}} p\to q$, then for any valuation $v^{*}$, we have $v^{*}(p)=\langle a,b\rangle\in [0,1]\times[0,1]$ and $v^{*}(q)=\langle c,d\rangle\in [-1,1]\times[-1,1]$. Since $\models_{\footnotesize \text{q}\L^{*}} p\to q$, we have $v^{*}(p\to q)=\langle a,b\rangle\to \langle c,d\rangle=\langle \min\{1,\max\{-1, c-a\}\}, 0\rangle=\langle c-a, 0\rangle\in [0,1]\times[0,1]$, so $c-a\in [0,1]$. Note that $a\in [0,1]$, we have $0\leq c$, so $v^{*}((1\to 1)\to q)=(v^{*}(1)\to v^{*}(1))\to v^{*}(q)=(\langle 1, 0\rangle\to \langle 1, 0\rangle)\to \langle c, d\rangle=\langle 0, 0\rangle\to \langle c, d\rangle=\langle c, 0\rangle\in [0,1]\times[0,1]$ and then $\models_{\tiny \text{q}\L^{*}} (1\to 1)\to q$.

(R2) If $\models_{\tiny \text{q}\L^{*}} ((1\to 1)\to (p\to q))$, then for any valuation $v^{*}$, we have that $v^{*}((1\to 1)\to (p\to q))\in [0,1]\times[0,1]$. Since $v^{*}(p\to q)=(\langle 1, 0\rangle\to \langle 1, 0\rangle)\to v^{*}(p\to q)=(v^{*}(1)\to v^{*}(1))\to v^{*}(p\to q)=v^{*}((1\to 1)\to (p\to q))$, we have $v^{*}(p\to q)\in [0,1]\times[0,1]$, so $\models_{\tiny \text{q}\L^{*}} p\to q$.

(R3) If $\models_{\tiny \text{q}\L^{*}} p\to q$ and $\models_{\tiny \text{q}\L^{*}} r\to t$, then for any valuation $v^{*}$, we have that $v^{*}(p\to q)=\langle \min\{1,\max\{-1, c-a\}\},0\rangle\in [0,1]\times[0,1]$ where $v^{*}(p)=\langle a,b\rangle$ and $v^{*}(q)=\langle c,d\rangle$,
$v^{*}(r\to t)=\langle\min\{1,\max\{-1, h-e\}\},0\}\rangle\in [0,1]\times[0,1]$ where $v^{*}(r)=\langle e,f\rangle$ and $v^{*}(t)=\langle h,j\rangle$, so $c-a\in [0,1]$ and $h-e\in[0,1]$.
We calculate that $v^{*}((q\to r)\to (p\to t))=\langle \min\{1,\max\{-1, h-e+c-a\},0\}\rangle$ and $0\leq h-e+c-a$,
so $v^{*}((q\to r)\to (p\to t))\in [0,1]\times [0,1]$ and then $\models_{\tiny \text{q}\L^{*}} ((q\to r)\to (p\to t))$.

\end{proof}

\section{Conclusions}\label{Con}
In this paper, we primarily investigate the logical system associated with quasi-MV* algebras. To achieve this, we introduce the definition of quasi-Wajsberg* algebras and investigate their related properties. Furthermore, we establish the term equivalence between quasi-Wajsberg* algebras and quasi-MV* algebras. Additionally, we construct the logical system associated with quasi-Wajsberg* algebras and prove its soundness. In future work, we aim to investigate the completeness of this logical system and explore further properties related to complex fuzzy logic.

\section{Declarations}
\begin{description}
\item[Funding] This work was supported by Shandong Provincial Natural Science Foundation, China
(No. ZR2020MA041), China Postdoctoral Science Foundation (No. 2017M622177) and Shandong Province Postdoctoral Innovation Projects of Special Funds (No. 201702005).
\item[Conflict interest] Author A declares that she has no conflict of interest. Author B declares that she has no conflict of interest.
\end{description}

\begin{bibdiv}
  \begin{biblist}

\bib{bglp}{article}{
  title={The logic of quasi-MV algebras},
  author={Bou, F{\'e}lix and Paoli, Francesco and Ledda, Antonio and Spinks, Matthew and Giuntini, Roberto},
  journal={Journal of Logic and Computation},
  volume={20},
  number={2},
  pages={619--643},
  year={2010},
  publisher={Oxford University Press}
}

\bib{c2}{article}{
  title={Logic with positive and negative truth values},
  author={Chang, Chen Chung},
  year={1971}
}

\bib{d21}{article}{
  title={On partial orders in complex fuzzy logic},
  author={Dai, Songsong},
  journal={IEEE Transactions on Fuzzy Systems},
  volume={29},
  number={3},
  pages={698--701},
  year={2019},
  publisher={IEEE}
}

\bib{d05}{article}{
  title={Toward complex fuzzy logic},
  author={Dick, Scott},
  journal={IEEE Transactions on Fuzzy Systems},
  volume={13},
  number={3},
  pages={405--414},
  year={2005},
  publisher={IEEE}
}

\bib{dyy}{article}{
  title={On Pythagorean and complex fuzzy set operations},
  author={Dick, Scott and Yager, Ronald R and Yazdanbakhsh, Omolbanin},
  journal={IEEE Transactions on Fuzzy Systems},
  volume={24},
  number={5},
  pages={1009--1021},
  year={2015},
  publisher={IEEE}
}

\bib{jc1}{article}{
  title={Quasi-MV* algebras: a generalization of MV*-algebras},
  author={Jiang, Yingying and Chen, Wenjuan},
  journal={Soft Computing},
  volume={26},
  number={15},
  pages={6999--7015},
  year={2022},
  publisher={Springer}
}

\bib{jc2}{incollection}{
  title={The prime ideals of QMV*-algebras},
  author={Jiang, Yingying and Chen, Wenjuan},
  booktitle={Fuzzy Systems and Data Mining VII},
  pages={305--313},
  year={2021},
  publisher={IOS Press}
}

\bib{jc3}{inproceedings}{
  title={Prime Filters of QMV*-algebras},
  author={Jiang, Yingying and Chen, Wenjuan},
  booktitle={2021 16th International Conference on Intelligent Systems and Knowledge Engineering (ISKE)},
  pages={662--667},
  year={2021},
  organization={IEEE}
}

\bib{gklp}{article}{
  title={MV-algebras and quantum computation},
  author={Ledda, Antonio and Konig, Martinvaldo and Paoli, Francesco and Giuntini, Roberto},
  journal={Studia Logica},
  volume={82},
  pages={245--270},
  year={2006},
  publisher={Springer}
}

\bib{lsm1}{article}{
  title={MV*-algebras},
  author={Lewin, Renato and Sagastume, Marta and Massey, Pedro},
  journal={Logic Journal of the IGPL},
  volume={12},
  number={6},
  pages={461--483},
  year={2004},
  publisher={OUP}
}

\bib{ls}{incollection}{
  title={Paraconsistency in Chang's logic with positive and negative truth values},
  author={Lewin, Renato A and Sagastume, Marta S},
  booktitle={Paraconsistency},
  pages={381--396},
  year={2002},
  publisher={CRC Press}
}

\bib{lsm2}{article}{
  title={Chang's $\L^*$ logic},
  author={Lewin, Renato and Sagastume, Marta and Massey, Pedro},
  journal={Logic Journal of the IGPL},
  volume={12},
  number={6},
  pages={485--497},
  year={2004},
  publisher={OUP}
}

\bib{lz}{article}{
  title={Comment on Pythagorean and complex fuzzy set operations},
  author={Liu, Lianzhen and Zhang, Xiangyang},
  journal={IEEE Transactions on Fuzzy Systems},
  volume={26},
  number={6},
  pages={3902--3904},
  year={2018},
  publisher={IEEE}
}

\bib{rflk}{article}{
  title={Complex fuzzy logic},
  author={Ramot, Daniel and Friedman, Menahem and Langholz, Gideon and Kandel, Abraham},
  journal={IEEE transactions on fuzzy systems},
  volume={11},
  number={4},
  pages={450--461},
  year={2003},
  publisher={IEEE}
}

\bib{tlk}{incollection}{
  title={The theory and applications of generalized complex fuzzy propositional logic},
  author={Tamir, Dan E and Last, Mark and Kandel, Abraham},
  booktitle={Soft computing: state of the art theory and novel applications},
  pages={177--192},
  year={2013},
  publisher={Springer}
}

\bib{yd}{article}{
  title={A systematic review of complex fuzzy sets and logic},
  author={Yazdanbakhsh, Omolbanin and Dick, Scott},
  journal={Fuzzy sets and systems},
  volume={338},
  pages={1--22},
  year={2018},
  publisher={Elsevier}
}

  \end{biblist}
\end{bibdiv}
\raggedright
\end{document}